\documentclass{amsart}

\usepackage{amsfonts,amssymb,mathrsfs}
\usepackage{graphics,color,hyperref}
\providecommand{\arxiv}[2][]{\href{http://www.arXiv.org/abs/#2}{arXiv:#2}}
\pagestyle{plain}
\newtheorem{definition}{Definition}

\newtheorem{proposition}{Proposition} 
\newtheorem{lemma}{Lemma} 
\newtheorem{theorem}{Theorem} 
\theoremstyle{remark}
\newtheorem{remark}{Remark}
\newtheorem{example}{Example} 
\title[Duality between invariant spaces for max-plus linear discrete event systems]{Duality between invariant spaces for max-plus linear discrete event systems}
\author{Michael Di Loreto}
\address{Laboratoire Amp\`ere, UMR CNRS 5005, INSA de Lyon, 
20 Avenue Albert Einstein, 69621 Villeurbanne Cedex, France.}
\email{michael.di-loreto@insa-lyon.fr}
\author{St\'ephane Gaubert}
\thanks{The work of the second author was partially supported by the joint 
RFBR-CNRS grant 05-01-02807}
\address{INRIA and Centre de Math\'ematiques Appliqu\'ees, 
\'Ecole Polytechnique. Postal address: CMAP, \'Ecole Polytechnique, 
91128 Palaiseau C\'edex, France. T\`el: +33 1 69 33 46 13, 
Fax: +33 1 39 63 57 86}
\email{Stephane.Gaubert@inria.fr }
\author{Ricardo D. Katz$^*$}
\thanks{$^*$Corresponding author}
\address{CONICET. Postal address: Instituto de Matem\'atica 
``Beppo Levi'', Universidad Nacional de Rosario, 
Avenida Pellegrini 250, 2000 Rosario, Argentina.}
\email{rkatz@fceia.unr.edu.ar}
\author{Jean-Jacques Loiseau}
\address{Institut de Recherche en Communications et en Cybern\'etique de 
Nantes (IRCCyN), UMR CNRS 6597, 1 rue de la Noe BP 92 101, 44 321 Nantes Cedex 3, France.}
\email{Jean-Jacques.Loiseau@irccyn.ec-nantes.fr}
\keywords{Conditioned invariance, controlled invariance, duality, 
geometric control, dynamic observer, max-plus algebra, Discrete Event Systems, 
tropical semiring}
\subjclass[2000]{primary: 93B27, secondary: 06F05}

\DeclareMathAlphabet{\mathbbold}{U}{bbold}{m}{n}
\newcommand{\zero}{\varepsilon}
\newcommand{\unit}{0}
\newcommand{\set}[2]{\{#1\mid\,#2\}}
\newcommand{\spancon}[1]{\langle #1 \rangle}
\newcommand{\R}{\mathbb{R}}
\newcommand{\N}{\mathbb{N}}
\newcommand{\Z}{\mathbb{Z}}

\newcommand{\Zm}{\Z\cup\{-\infty\}}

\newcommand{\rmax}{\R_{\max}}

\newcommand{\zmax}{\Z_{\max}}
\newcommand{\sL}{\mathscr{L}}
\newcommand{\sM}{\mathscr{M}}
\newcommand{\sLg}{\mathscr{L}_{\rm fg}}
\newcommand{\sMg}{\mathscr{M}_{\rm fg}}

\newcommand{\mrm}[1]{\text{\rm #1}}

\newcommand{\cC}{\mathcal{C}}
\newcommand{\cB}{\mathcal{B}}
\newcommand{\cV}{\mathcal{V}}

\newcommand{\cU}{\mathcal{U}}
\newcommand{\cS}{\mathcal{S}}
\newcommand{\cW}{\mathcal{W}}
\newcommand{\cX}{\mathcal{X}}
\newcommand{\cY}{\mathcal{Y}}
\newcommand{\cZ}{\mathcal{Z}}
\newcommand{\cK}{\mathcal{K}}
\newcommand{\cKg}{\mathcal{K}^{\rm fg}}
\newcommand{\cVg}{\mathcal{V}_{\rm fg}}
\newcommand{\im}{\mbox{\rm Im}\,}
\newcommand{\vol}{\mrm{vol}\,}

\newcommand{\smallvect}{\mrm{span}\,}

\begin{document}

\begin{abstract}
We extend the notions of conditioned and controlled invariant spaces to 
linear dynamical systems over the max-plus or tropical semiring. 
We establish a duality theorem relating both notions, 
which we use to construct dynamic observers. 
These are useful in situations in which some of the system coefficients 
may vary within certain intervals. 
The results are illustrated by an application to a manufacturing system.  
\end{abstract}

\maketitle

\section{Introduction}

The use of geometric-type techniques when dealing with linear dynamical 
systems, following a line of work initiated by 
Basile and Marro~\cite{BasMar69} and 
Morse and Wonham~\cite{MorWon70,MorWon71}, has provided important 
insights to system-theoretic and control-synthesis problems. 
In particular, this kind of techniques lead to elegant solutions to 
many control problems, such as the disturbance decoupling problem 
and the model matching problem, to quote but a few. 
To achieve this, a geometric approach, using certain linear spaces 
known as conditioned and controlled invariant spaces, 
has been developed (see~\cite{wonham,BasMar91} and the references therein). 

In the classical geometric approach to the theory of linear dynamical systems, 
the scalars belong to a field, or at least to a ring. However, 
the case where the scalars belong to a semiring is also of practical interest. 
In particular, linear dynamical systems with coefficients in the max-plus 
or tropical semiring, and other similar algebraic structures sometimes 
referred to as ``dioids'' or ``idempotent semirings'', 
arise in the modeling and analysis of some manufacturing systems 
following the approach initiated by 
Cohen, Dubois, Quadrat and Viot~\cite{cohen85a} 
(a systematic account can be found in the book by Baccelli, 
Cohen, Quadrat and Olsder~\cite{bcoq}).   
More recent developments of the max-plus approach include the 
``network calculus'' of Le Boudec and Thiran~\cite{leboudec}, 
which can be used to assess certain issues concerning the quality of 
service in telecommunication networks, or an application to train networks 
by Heidergott, Olsder and van der Woude~\cite{how06}. 
All these works provide important examples of discrete event dynamical systems 
subject to synchronization constraints that can be described by 
max-plus linear dynamical systems. 

Several results from linear system theory have been extended to the max-plus 
algebra framework, such as transfer series methods or the connection between 
spectral theory and stability questions (see~\cite{cohen89a}). 
In view of the potentiality of the theory of linear dynamical systems 
over the max-plus semiring, it is also tempting to generalize the geometric 
approach to these systems, a problem which was raised by 
Cohen, Gaubert and Quadrat in~\cite{ccggq99}. 
However, this generalization is not straightforward, 
because many concepts and results must be properly redefined and adapted.
Similar difficulties were already met in the case of linear dynamical 
systems over rings (for which we refer to the works of Hautus~\cite{hautus82} 
and of Conte and Perdon~\cite{conte94,conte95}), 
and in the case of linear systems of infinite dimensions on Hilbert spaces 
(see for instance Curtain~\cite{Cu86}). 
The difficulties are in two directions. 
In the first place, there are algorithmic issues. The concepts of conditioned 
and controlled invariant subspaces (or submodules) 
are no longer dual, and the convergence of the algorithms of the 
geometric approach is not guaranteed. Then, the computation of 
these spaces may be difficult or impossible in general. In the second place, 
the connection between invariance and control or estimation problems 
is more difficult to establish. 
Hypothesis must be added to overcome these problems. 

In this paper, we show that some of the main results of the 
geometric approach do carry over to the max-plus case. 
A first work in this direction was developed by Katz~\cite{katz07}, 
who studied the $(A,B)$-invariant spaces of max-plus linear systems 
providing solutions to some control problems.  
The max-plus analogue of the disturbance decoupling problem has been studied 
by Lhommeau et al.~\cite{LHC03,Lhommeau} making use of invariant sets in the 
spirit of the classical geometric approach. More precisely, 
principal ideal invariant sets were considered, 
which is an elegant solution to the algorithmic issues, 
leading to effective algorithms at the price of a restrictive assumption.
However, these works differ from~\cite{katz07} in the fact that 
they are based on residuation theory and transfer series techniques. 

The present paper is devoted to studying the max-plus analogues of 
conditioned and controlled invariance and the duality between them. 
In the classical linear system theory, conditioned invariant spaces 
are defined in terms of the kernel of the output matrix. 
In the semiring case, the usual definition of the kernel of a matrix is not 
pertinent because it is usually trivial. 
In their places, we consider a natural extension of kernels, 
the congruences, which are equivalence relations with a semimodule 
structure (see~\cite{CGQ96a,CGQ97a,gk08u}). Instead of considering, 
for instance, situations in which the perturbed state $x'$ of the system 
is the sum of the unperturbed state $x$ and of a noise $w$, 
we require the states $x$ and $x'$ to belong to the same equivalence 
class modulo a relation (congruence) which represents the perturbation. 
Indeed, in the max-plus setting, considering only additive 
perturbations would be an important restriction, 
because adding only means delaying events, 
whereas congruences allow us to model situations in which 
the perturbation drives some events to occur at an earlier time. 

By a systematic application of these ideas, 
we generalize the main notions of the classical geometric approach. 
However, this generalization raises new theoretical as well as 
algorithmic issues, because we have to work with congruences 
(sets of pairs of vectors), 
rather than with linear spaces (sets of vectors), 
leading to a general ``doubling'' of the dimension. 

Considering max-plus linear systems subject to perturbations modeled by 
congruences actually leads to an extension of the modeling power of the 
max-plus approach, allowing one to take certain classes of 
constraints or uncertainties into account. For instance, 
we show that max-plus linear dynamical systems 
with uncertain holding times can be modeled in this way, if these times 
are assumed to belong to certain intervals. For this kind of perturbed systems,
the minimal conditioned invariant space containing the perturbation 
can be interpreted as the ``best information'' that can be learned 
on the state of the system from a given observation and initial state.   
Our final result (Theorem~\ref{TheoObserver}) shows 
that this ``optimal information'' on the perturbed state of the system 
can be reconstructed from the output by means of a dynamic observer. 

In order to compute this dynamic observer, 
we extend to the max-plus algebra framework 
the classical fixed point algorithms used to compute the minimal conditioned 
invariant and the maximal controlled invariant spaces containing and 
contained, respectively, in a given space.  
Our main result, Theorem~\ref{TheoCondContr}, establishes a duality between 
conditioned and controlled invariant spaces. 
This allows us to reduce the computation of minimal conditioned invariant 
spaces to the computation of maximal controlled invariant spaces, 
and in this way to reduce algorithmic problems concerning congruences to 
algorithmic problems concerning semimodules, which are easier to handle.    
Thus, this duality theorem solves the previously mentioned algorithmic 
difficulties related to the ``doubling'' of the dimension. 
Then, Proposition~\ref{PropFiniteVolume} identifies conditions 
which guarantee that the fixed point algorithm used to compute 
the minimal conditioned invariant congruence containing 
a given congruence terminates in a finite number of steps. 
Its proof is based on a finite chain condition, 
which is valid thanks to the finiteness and integrity 
assumptions made in the proposition. 
Under more general circumstances, the max-plus case shows difficulties 
which seem somehow reminiscent of the ones encountered in 
the theory of invariant spaces for linear systems over non-Noetherian rings.  
Recall that the computation of such spaces is still an open problem 
in the case of general rings.

This paper is organized as follows. 
The next section is devoted to recalling basic definitions and results on 
max-plus algebra which will be used throughout this paper. 
In Section~\ref{SCondCont} we introduce the max-plus analogues of 
conditioned and controlled invariant spaces and extend to the max-plus algebra
framework the classical fixed point algorithms used for their computation. 
Duality between conditioned and controlled invariance 
is investigated in Section~\ref{SDuality} but previously, 
in Section~\ref{SOrthogonal}, it is convenient to introduce the notions of 
orthogonal of semimodules and congruences and study their properties. 
Finally, in Section~\ref{SApplication}, 
we illustrate the results presented here 
with their application to a manufacturing system.

\section{Preliminaries}\label{SPreliminaries}

The max-plus semiring, $\rmax$, 
is the set $\R\cup\{-\infty\}$ equipped with the 
addition $(a,b)\mapsto \max(a,b)$ and the multiplication $(a,b)\mapsto
a+b$. To emphasize the semiring structure, we write $a\oplus b:=\max(a,b)$
and $ab:=a+b$. 

For $p,q \in \N$, we denote by $\rmax^{p\times q}$ 
the set of all $p$ times $q$ matrices over the max-plus semiring. 
As usual, if $E\in \rmax^{p\times q}$, $E_{ij}$ denotes  
the element of $E$ in its $i$-th row and $j$-th column, 
and $E^t\in \rmax^{q\times p}$ the transposed of $E$. 
The semiring operations are extended in 
the natural way to matrices over the max-plus semiring: 
$(E\oplus F)_{ij}:=E_{ij}\oplus F_{ij}$, 
$(EF)_{ij}:=\oplus_k E_{ik} F_{kj}$ and $(\lambda E)_{ij}:=\lambda E_{ij}$ 
for all $i,j$, where $E$ and $F$ are matrices of compatible dimension and 
$\lambda \in \rmax$. For $E\in \rmax^{p\times q}$, 
we denote by $\im E:=\set{Ex}{x\in \rmax^q}$ the image of $E$.
We usually denote by $\zero:=-\infty$ the neutral element 
for addition as well as the null matrix of any dimension.

We equip $\rmax$ with the usual topology which 
can be defined by the metric: $d(a,b):=|\exp(a)-\exp(b)|$. 
The Cartesian product $\rmax^n$ is equipped with the product topology. 
Note that the semiring operations are continuous 
with respect to this topology.   

The analogues of vector spaces or modules obtained by replacing 
the field or ring of scalars by an idempotent semiring are called  
{\em semimodules} or {\em idempotent spaces}. 
They have been studied by several authors with different motivations
(see for example~\cite{zimmerman77,maslov92,GargKumar95,litvinov00,cgq02}). 
Here, we will only consider subsemimodules of the Cartesian 
product $\rmax^n$, also known as {\em max-plus cones}, 
which are subsets $\cK$ of $\rmax^n$ stable by max-plus linear combinations, 
meaning that
\begin{align*}
\lambda x\oplus \mu y \in \cK
\label{e-stable}
\end{align*}
for all $x,y\in \cK$ and $\lambda,\mu \in \rmax$. 
We denote by $\smallvect \cS$ the smallest semimodule containing a
subset $\cS$ of $\rmax^n$. Therefore, $\smallvect \cS$ is the set of 
all max-plus linear combinations of finitely many elements of $\cS$. 
A semimodule $\cK$ is said to be finitely generated, 
if there exists a finite set $\cS$ such that $\cK=\smallvect \cS$, 
which also means that $\cK=\im E$ for some matrix $E$. 
We shall need the following lemma.

\begin{lemma}[\cite{BSS,gk07}]\label{FGClosed}
Finitely generated subsemimodules of $\rmax^n$ are closed.
\end{lemma}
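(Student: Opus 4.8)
The plan is to realise $\cK$ as the set of fixed points of a suitable continuous self-map of $\rmax^n$. By definition $\cK=\im E$ for some $E\in\rmax^{n\times q}$, and deleting the zero columns of $E$ does not change $\im E$; so, after discarding the trivial case in which all columns vanish (then $\cK=\{\zero\}$ is a single point), I may assume that $E$ has no zero column.

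I would then bring in the residuated map $y\mapsto E\backslash y$ from $\rmax^n$ to $\rmax^q$, given coordinatewise by $(E\backslash y)_j:=\min\set{y_i-E_{ij}}{E_{ij}\neq\zero}$, where each difference $y_i-E_{ij}$ is read in $\rmax$ (a real scalar $E_{ij}$ gives the self-homeomorphism $t\mapsto t-E_{ij}$ of $\rmax$, with $\zero-E_{ij}=\zero$). Because $E$ has no zero column, for each $j$ this minimum is taken over a nonempty finite set, so $E\backslash y$ indeed lies in $\rmax^q$; and being a finite coordinatewise minimum of continuous maps $\rmax^n\to\rmax$, the assignment $y\mapsto E\backslash y$ is continuous, hence so is $P\colon\rmax^n\to\rmax^n$, $P(y):=E(E\backslash y)$. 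A short direct computation then yields the Galois connection $Ex\le y\iff x\le E\backslash y$, from which I get (i) $E(E\backslash y)\le y$ for every $y$ (take $x=E\backslash y$), and (ii) if $y=Ex$ then $x\le E\backslash y$, whence $y=Ex\le E(E\backslash y)\le y$ by monotonicity and (i). Consequently $P(y)=y$ precisely when $y\in\im E$, that is, $\cK=\set{y\in\rmax^n}{P(y)=y}$.

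Since $\rmax^n$ is metric, hence Hausdorff, the set $\cK$ is closed as the equalizer of the continuous maps $P$ and $\mathrm{id}$; equivalently, if $y^{(k)}\in\cK$ converges to $y$, replacing any preimage of $y^{(k)}$ by $E\backslash y^{(k)}$ (still a preimage, by (ii)) and passing to the limit gives $y=\lim_k E(E\backslash y^{(k)})=E(E\backslash y)\in\im E$. The only point I expect to require real care is the continuity of residuation: scalar residuation by $\zero$ would produce the value $+\infty\notin\rmax$, so the no-zero-column normalization is essential to keep $E\backslash y$ inside $\rmax^q$, and one must also check that coordinates of $y$ equal to $\zero$ do not create a discontinuity (they simply force the corresponding residual coordinates to $\zero$, consistently with $t\mapsto t-E_{ij}$ being a homeomorphism of $\rmax$). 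Everything else — the Galois connection (i)--(ii) and the continuity of the max-plus operations in the product topology — is routine, and is perhaps most transparent through the homeomorphism $\rmax^n\cong[0,\infty)^n$, $a\mapsto e^a$, under which $\oplus$ becomes coordinatewise maximum and scalar multiplication becomes ordinary multiplication.
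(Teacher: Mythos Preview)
Your argument is correct. Note, however, that the paper does not give its own proof of this lemma: it is stated with attribution to~\cite{BSS,gk07} and used as a black box, so there is no in-paper argument to compare against. The route you take---realizing $\im E$ as the fixed-point set of the continuous residuated projector $P(y)=E(E\backslash y)$---is one of the standard proofs and is essentially the projection-operator viewpoint of~\cite{CGQ96a,CGQ97a}; the direct sequential-compactness variant you sketch at the end is the other common argument. The one genuinely delicate point is exactly the one you flag: keeping $E\backslash y$ inside $\rmax^q$ (rather than needing $+\infty$), which your no-zero-column normalization handles; once that is secured, continuity of $P$ and the Galois identity $E(E\backslash E x)=Ex$ are routine.
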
 

A {\em congruence} on $\rmax^n$ is an 
equivalence relation $\cW\subset (\rmax^n)^2$ on $\rmax^n$ which has 
a semimodule structure when it is thought of as a subset of $(\rmax^n)^2$. 
Congruences can be seen as the max-plus analogues of kernels of the 
classical theory: due to the absence of minus sign, 
given a matrix $E\in \rmax^{p\times n}$, it is natural to define 
the {\em kernel} of $E$ (see~\cite{CGQ96a,CGQ97a}) as the congruence
\[  
\ker E:=\set{(x,y)\in (\rmax^n)^2}{E x = E y} \; . 
\] 
The usual definition $\ker E:=\set{x\in \rmax^n}{E x = \zero }$ 
is not convenient in the max-plus algebra case, 
because this semimodule is usually trivial even if $E$ is not injective, 
so it carries little information. 
If $A\in \rmax^{n\times n}$ is a matrix and $\cW\subset (\rmax^n)^2$ 
is a congruence, we define 
\[
A\cW:=\set{(Ax,Ay)\in (\rmax^n)^2}{(x,y)\in \cW}
\] 
and 
\[
A^{-1}\cW:=\set{(x,y)\in (\rmax^n)^2}{(Ax,Ay)\in \cW} \; . 
\]
Observe that $A\cW$ is not necessarily a congruence even if $\cW$ is. 
For $\cS \subset \rmax^n$, we define as usual 
\[ 
A\cS:=\set{Ax\in \rmax^n}{x\in \cS} \;
\makebox{ and} \; 
A^{-1}\cS:=\set{x\in \rmax^n}{Ax\in \cS} \; .
\] 
In the sequel, if $\cW$ is a congruence, we write $x\sim_\cW y$
for $(x,y)\in \cW$ and denote by $[x]_{\cW}$ the equivalence
class of $x$ modulo $\cW$.

\section{Max-plus conditioned and controlled invariance}\label{SCondCont}

We consider max-plus dynamical systems of the form
\begin{equation}\label{e-fond}
\left\{ 
\begin{array}{l}
x(k+1) \sim_\cV Ax(k)  \\
y(k)=Cx(k) \\
x(0)=x
\end{array}
\right. 
\end{equation}
where $A\in \rmax^{n\times n}$, $C\in \rmax^{q\times n}$, $x\in \rmax^n$ 
is the initial state, $x(k)\in \rmax^n$ is the state, 
$y(k)\in \rmax^q$ is the output and $\cV\subset (\rmax^n)^2$ 
is a congruence which represents unobservable perturbations 
of the max-plus linear system $x(k+1)=Ax(k)$.
Hence, the dynamics in~\eqref{e-fond} is multi-valued, 
meaning that several values of $x(k+1)$ are compatible with a given $x(k)$.
We assume that the output $y(k)$ is observed. 

If $\cV=\ker E$ for some matrix $E$, then 
$x(k+1)\sim_{\cV} A x(k)$ is equivalent to $E x(k+1)=E A x(k)$,
so $x\mapsto E x$ may be interpreted as an invariant which must
be preserved by the perturbation. Hence, system~\eqref{e-fond} might
be viewed as an implicit linear system. In classical system theory,
implicit systems are often used to represent systems subject to disturbances.

In Section~\ref{SApplication} we will show that dynamical systems of the 
form~\eqref{e-fond} can be used, for instance, 
to model max-plus linear dynamical systems of the form 
\[
x(k+1)=\bar{A} x(k)\; , 
\]
when some entries of $\bar{A}$ are unknown but belong to certain intervals.

The analogy between congruences and classical kernels leads 
us to the following definition.  

\begin{definition}[Conditioned Invariant]
Given $A\in \rmax^{n\times n}$ and $C\in \rmax^{q\times n}$, 
a congruence $\cW\subset (\rmax^n)^2$ is said to be 
$(C,A)$-conditioned invariant if 
\begin{equation}
A(\cW \cap \cC)\subset \cW \; , 
\end{equation}
where $\cC:=\ker C$.
\end{definition}

The following proposition establishes the connection 
between conditioned invariants and the observation problem
for dynamical systems of the form~\eqref{e-fond}. 

\begin{proposition}\label{CondInvInterp}
Let $\cW$ be a congruence containing the perturbation $\cV$. 
Then, $\cW$ is $(C,A)$-conditioned invariant if, and only if, 
for any trajectory $\left\{ x(k)\right\}_{k\geq 0}$ of 
system~\eqref{e-fond} and any $m\in \N$,  
the equivalence class of $x(m)$ modulo $\cW$ is uniquely 
determined by the equivalence class of $x(0)$ modulo $\cW$
and by the observations $y(0),\ldots,y(m-1)$. 
\end{proposition}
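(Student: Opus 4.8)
The plan is to prove both implications by unravelling what ``the equivalence class of $x(m)$ modulo $\cW$ is determined by $[x(0)]_\cW$ and $y(0),\dots,y(m-1)$'' means in terms of trajectories, and then reducing everything to a one-step statement by induction on $m$. Concretely, I would first reformulate the determinacy condition: it holds iff, whenever $\{x(k)\}$ and $\{x'(k)\}$ are two trajectories of~\eqref{e-fond} (possibly with different initial states) satisfying $x(0)\sim_\cW x'(0)$ and $Cx(k)=Cx'(k)$ for $k=0,\dots,m-1$, one has $x(m)\sim_\cW x'(m)$. Since this must hold for all $m$, the whole statement is equivalent to the single-step assertion: for all trajectories $\{x(k)\},\{x'(k)\}$, if $x(k)\sim_\cW x'(k)$ and $Cx(k)=Cx'(k)$ then $x(k+1)\sim_\cW x'(k+1)$. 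The forward direction of the induction is immediate; for the reduction in the other direction one just iterates.

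Next I would establish the key one-step lemma: given that $\cV\subset\cW$, the congruence $\cW$ is $(C,A)$-conditioned invariant, i.e.\ $A(\cW\cap\cC)\subset\cW$, if and only if the one-step determinacy property above holds. For the ``only if'' part, suppose $x(k)\sim_\cW x'(k)$ and $Cx(k)=Cx'(k)$, so $(x(k),x'(k))\in\cW\cap\cC$; then $(Ax(k),Ax'(k))\in A(\cW\cap\cC)\subset\cW$, i.e.\ $Ax(k)\sim_\cW Ax'(k)$. Now $x(k+1)\sim_\cV Ax(k)$ and $x'(k+1)\sim_\cV Ax'(k)$, and since $\cV\subset\cW$ these give $x(k+1)\sim_\cW Ax(k)$ and $x'(k+1)\sim_\cW Ax'(k)$; transitivity and symmetry of the congruence $\cW$ then yield $x(k+1)\sim_\cW x'(k+1)$. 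For the ``if'' part, take any $(x,x')\in\cW\cap\cC$; I need to exhibit trajectories through $x$ and $x'$ to which the hypothesis applies. Set $x(0)=x$, $x'(0)=x'$, choose $x(1)=Ax$, $x'(1)=Ax'$ (which are valid successors since $Ax\sim_\cV Ax$ by reflexivity, and likewise for $x'$), and extend arbitrarily for $k\ge 2$. Then $x(0)\sim_\cW x'(0)$ and $Cx(0)=Cx'(0)$, so the one-step property gives $Ax=x(1)\sim_\cW x'(1)=Ax'$, i.e.\ $(Ax,Ax')\in\cW$. Hence $A(\cW\cap\cC)\subset\cW$.

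Finally I would assemble these pieces: the determinacy statement of the proposition is, by the induction reformulation, equivalent to one-step determinacy, which by the lemma is equivalent to $(C,A)$-conditioned invariance. One should be slightly careful about the role of the initial-state data: the proposition phrases things in terms of a single trajectory and the data $([x(0)]_\cW, y(0),\dots,y(m-1))$, so ``uniquely determined by'' must be read as ``any two trajectories agreeing on these data agree on $[x(m)]_\cW$'', which is exactly the two-trajectory formulation I used; I would make this translation explicit at the start.

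The main obstacle I expect is not any deep computation but rather the bookkeeping around the multi-valued dynamics: one must check that the existence of successors used in the ``if'' direction is legitimate (reflexivity of $\cV$ guarantees $Ax$ is always an admissible value of $x(k+1)$, and more generally any $z$ with $z\sim_\cV Ax$ works), and one must be careful that in the ``only if'' direction the two trajectories may have genuinely different noise realizations, so the argument must only use $\cV\subset\cW$ together with the congruence (equivalence-relation) axioms for $\cW$, never any property of $\cV$ beyond containment. Making sure the induction is set up so that the inductive hypothesis delivers $x(k)\sim_\cW x'(k)$ at each intermediate step — which is where the output equalities $y(j)=y'(j)$ for $j<m$ get consumed one at a time — is the one place where a careless write-up could go wrong.
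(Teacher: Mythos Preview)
Your proposal is correct and follows essentially the same approach as the paper's proof: both directions proceed via a one-step argument (if $x(k)\sim_\cW x'(k)$ and $Cx(k)=Cx'(k)$ then $x(k+1)\sim_\cW x'(k+1)$) combined with an induction, using $\cV\subset\cW$ and the equivalence-relation axioms of $\cW$ exactly as you describe. Your ``if'' direction is marginally cleaner than the paper's, since you directly choose $x(1)=Ax$ via reflexivity of $\cV$, whereas the paper takes arbitrary $x(1)\sim_\cV Ax(0)$ and then passes back through $\cV\subset\cW$; this is a cosmetic difference only.
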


\begin{proof}
Assume that $\cW$ is $(C,A)$-conditioned invariant.  
Let $x(k+1)\sim_\cV Ax(k)$ and $x'(k+1)\sim_\cV Ax'(k)$.
Then, if $x(k)\sim_\cW x'(k)$ and $y(k):=Cx(k)=y'(k):=Cx'(k)$, 
we have $(x(k),x'(k))\in \cW\cap \cC$, and so 
$(Ax(k),Ax'(k))\in A(\cW\cap \cC)\subset \cW$ because 
$\cW$ is $(C,A)$-conditioned invariant and $\cC=\ker C$. 
Therefore, $Ax(k)\sim_\cW Ax'(k)$, and since $\cW\supset \cV$, 
we deduce that $x(k+1)\sim_\cW x'(k+1)$. 
The ``only if'' part of the proposition follows from an immediate induction.

Conversely, assume that for any trajectory $\left\{ x(k)\right\}_{k\geq 0}$ 
of system~\eqref{e-fond} the equivalence class of $x(m)$ modulo 
$\cW$ is uniquely determined by the equivalence class of $x(0)$ modulo $\cW$
and by the observations. Let  $(x(0),x'(0))\in \cW\cap \cC$. Then, 
if $x(1)\sim_\cV Ax(0)$ and $x'(1)\sim_\cV Ax'(0)$,  
we have $x(1)\sim_\cW x'(1)$ because $x(0)\sim_\cW x'(0)$ and 
$y(0):=Cx(0)=y'(0):=Cx'(0)$. Therefore, 
it follows that $Ax(0) \sim_\cW Ax'(0)$ because $\cV \subset \cW$. 
Since this holds for any $(x(0),x'(0))\in \cW\cap \cC$, 
we conclude that $A (\cW\cap \cC) \subset \cW$, 
which proves the ``if'' part of the proposition. 
\end{proof}
 
This proposition raises, for observation purpose, 
the question of the existence, and the computation when it exists, 
of the minimal $(C,A)$-conditioned invariant congruence containing $\cV$. 
Like in the case of coefficients in a field, 
the following lemma can be easily proved. 
 
\begin{lemma}\label{InterConditioned}
The intersection of $(C,A)$-conditioned invariant congruences 
is a $(C,A)$-conditioned invariant congruence. 
\end{lemma}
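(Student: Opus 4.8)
The plan is to verify directly the two defining properties of a $(C,A)$-conditioned invariant congruence for an arbitrary intersection, both of which reduce to routine monotonicity arguments over the family. Let $\{\cW_i\}_{i\in I}$ be a family of $(C,A)$-conditioned invariant congruences and set $\cW:=\bigcap_{i\in I}\cW_i$, the intersection being taken inside $(\rmax^n)^2$.

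First I would check that $\cW$ is a congruence. As a subset of $(\rmax^n)^2$ it is an equivalence relation, since reflexivity, symmetry and transitivity each pass to intersections. It is also a subsemimodule of $(\rmax^n)^2$: if $(x,x'),(y,y')\in\cW$ and $\lambda,\mu\in\rmax$, then $(x,x')$ and $(y,y')$ lie in $\cW_i$ for every $i$, hence $\lambda(x,x')\oplus\mu(y,y')\in\cW_i$ for every $i$ because each $\cW_i$ has a semimodule structure, and therefore this linear combination lies in $\cW$. Thus $\cW$ is a congruence.

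Next I would establish the invariance inclusion $A(\cW\cap\cC)\subset\cW$, where $\cC=\ker C$. Fix $i\in I$. Since $\cW\subset\cW_i$, we have $\cW\cap\cC\subset\cW_i\cap\cC$, and applying $A$ gives $A(\cW\cap\cC)\subset A(\cW_i\cap\cC)\subset\cW_i$, the last inclusion holding because $\cW_i$ is $(C,A)$-conditioned invariant. As $i\in I$ was arbitrary, $A(\cW\cap\cC)\subset\bigcap_{i\in I}\cW_i=\cW$, which is the required property.

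Since every step is a one-line intersection or monotonicity argument, I do not expect a genuine obstacle. The only points to keep in mind are that the family may be infinite, so the argument must be stated for arbitrary $I$, and that the map $\cW\mapsto A\cW$ acts by $A(x,x')=(Ax,Ax')$ as in the definition given in Section~\ref{SPreliminaries}, whence its monotonicity under inclusion — used in the previous paragraph — is immediate.
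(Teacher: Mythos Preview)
Your proof is correct and is exactly the routine verification the paper has in mind: the authors omit the argument, saying only that ``like in the case of coefficients in a field, the following lemma can be easily proved.'' Your monotonicity computation $A(\cW\cap\cC)\subset A(\cW_i\cap\cC)\subset\cW_i$ together with the observation that intersections of congruences are congruences is the natural way to fill this in.
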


If we denote by $\sL(C,A,\cV)$ the set of all $(C,A)$-conditioned 
invariant congruences containing a given congruence $\cV$, then, 
as a consequence of the previous lemma, it follows that 
$\sL(C,A,\cV)$ is a lower semilattice with respect to $\subset$ and $\cap$. 
Moreover, Lemma~\ref{InterConditioned} also implies that $\sL(C,A,\cV)$ 
admits a smallest element, the minimal $(C,A)$-conditioned 
invariant congruence containing $\cV$, which will be denoted 
by $\cV_*(C,A)$.

In order to compute $\cV_*(C,A)$, 
we extend the classical fixed point algorithm 
(see~\cite{BasMar69,BasMar91,wonham}) to the max-plus algebra framework. 
With this purpose in mind, 
consider the self-map $\psi$ of the set of congruences given by 
\begin{equation}\label{DefPsi}
\psi (\cW):=\spancon {\cV\oplus A(\cW \cap \cC)} \enspace, 
\end{equation}
where $\spancon \cU$ denotes the smallest congruence containing the set 
$\cU \subset (\rmax^n)^2$.  
Note that $\cW \subset \cU$ implies $\psi (\cW )\subset \psi (\cU )$. 
Define the sequence of congruences $\left\{\cW_k\right\}_{k\in \N}$ by:
\begin{equation}\label{DefSequenceW}
\cW_1 := \cV \; \makebox{ and } \; 
\cW_{k+1}:=\psi (\cW_{k}) \;  \makebox{ for } \; k\in \N \; .
\end{equation} 
Then, this sequence is (weakly) increasing, that is, 
$\cW_k\subset \cW_{k+1}$ for all $k\in \N$. As a matter of fact, 
$\cW_1=\cV \subset \cV\oplus A(\cW_1 \cap \cC)\subset 
\spancon{\cV\oplus A(\cW_1 \cap \cC)}=\cW_2$ and if 
$\cW_r\subset \cW_{r+1}$, then 
$\cW_{r+1}=\psi (\cW_r)\subset \psi (\cW_{r+1})=\cW_{r+2}$.
We define $\cV_\infty$ as the limit of the sequence 
$\left\{\cW_k\right\}_{k\in \N}$, that is, $\cV_\infty=\cup_{k\in \N}\cW_k$. 
Note that $\cV_\infty$ is a congruence, because 
$\left\{\cW_k\right\}_{k\in \N}$ 
is an increasing sequence of congruences. 

\begin{proposition}\label{FixedPointCong}
Let $\cV\subset (\rmax^n)^2$ be a congruence. Then, $\cV_\infty$ is 
the minimal $(C,A)$-conditioned invariant congruence containing $\cV$. 
\end{proposition}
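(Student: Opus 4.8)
The plan is to establish the two inclusions that characterize $\cV_*(C,A)$ as the minimal element of $\sL(C,A,\cV)$: first that $\cV_\infty$ is itself a $(C,A)$-conditioned invariant congruence containing $\cV$, and second that $\cV_\infty$ is contained in every member of $\sL(C,A,\cV)$. For the second inclusion I would argue by induction on $k$ that $\cW_k \subset \cW$ for any $\cW \in \sL(C,A,\cV)$: the base case $\cW_1 = \cV \subset \cW$ holds by hypothesis, and for the inductive step, assuming $\cW_k \subset \cW$, we get $A(\cW_k \cap \cC) \subset A(\cW \cap \cC) \subset \cW$ since $\cW$ is conditioned invariant, while also $\cV \subset \cW$; hence $\cV \oplus A(\cW_k \cap \cC) \subset \cW$, and since $\cW$ is a congruence, $\cW_{k+1} = \spancon{\cV \oplus A(\cW_k \cap \cC)} \subset \cW$. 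Taking the union over $k$ gives $\cV_\infty \subset \cW$. Combined with Lemma~\ref{InterConditioned}, once we know $\cV_\infty \in \sL(C,A,\cV)$, minimality is immediate.

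The substantive part is therefore showing $\cV_\infty$ is $(C,A)$-conditioned invariant, i.e. $A(\cV_\infty \cap \cC) \subset \cV_\infty$. Clearly $\cV \subset \cV_\infty$ since $\cV = \cW_1$. To prove the inclusion, take $(x,y) \in \cV_\infty \cap \cC$. The difficulty is that $\cV_\infty = \cup_k \cW_k$, so $(x,y) \in \cW_k$ for some $k$; since also $(x,y) \in \cC$, we have $(x,y) \in \cW_k \cap \cC$, hence $(Ax, Ay) \in A(\cW_k \cap \cC) \subset \cV \oplus A(\cW_k \cap \cC) \subset \spancon{\cV \oplus A(\cW_k \cap \cC)} = \cW_{k+1} \subset \cV_\infty$. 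Thus $A(\cV_\infty \cap \cC) \subset \cV_\infty$, and since $\cV_\infty$ is already known (from the discussion preceding the proposition) to be a congruence, this finishes the verification that $\cV_\infty \in \sL(C,A,\cV)$.

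I expect the main obstacle to be purely a matter of bookkeeping rather than mathematical depth: one must be careful that the set-theoretic union $\cup_k \cW_k$ commutes appropriately with the operations $\cap \cC$ and left-multiplication by $A$ in the sense used above — specifically that membership of a pair $(x,y)$ in $\cV_\infty$ really is witnessed at some finite stage $k$, and that this same stage can be used simultaneously for the condition $(x,y) \in \cC$. This works because $\cC$ does not depend on $k$ and the $\cW_k$ form an increasing chain, so no ``diagonal'' argument is needed. A secondary point worth stating explicitly is the monotonicity remark $\cW \subset \cU \Rightarrow \psi(\cW) \subset \psi(\cU)$, already recorded in the excerpt, which underlies both the increasing character of $\{\cW_k\}$ and the inductive step in the minimality argument; I would simply invoke it rather than reprove it. No finiteness or termination claim is needed here — the proposition asserts only that the (possibly transfinite-looking, but actually countable) union is the desired minimal congruence, not that the algorithm stabilizes in finitely many steps.
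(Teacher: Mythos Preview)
Your proposal is correct and follows essentially the same approach as the paper: an induction showing $\cW_k \subset \cW$ for every $\cW \in \sL(C,A,\cV)$, together with the observation that $A(\cV_\infty \cap \cC) \subset \cV_\infty$ because any element of $\cV_\infty \cap \cC$ already lies in some $\cW_k \cap \cC$. The paper phrases the latter as a chain of set equalities $A((\cup_k \cW_k)\cap\cC)=\cup_k A(\cW_k\cap\cC)\subset\cup_k\cW_{k+1}$ rather than pointwise, but this is the same argument; note also that your appeal to Lemma~\ref{InterConditioned} is unnecessary, since you have already shown directly that $\cV_\infty$ is contained in every element of $\sL(C,A,\cV)$.
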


\begin{proof}
Let $\cW\subset (\rmax^n)^2$ be a $(C,A)$-conditioned invariant congruence 
containing $\cV$. We next show that $\cW_k\subset \cW$ for all $k\in \N$, 
and therefore $\cV_\infty\subset \cW$. In the first place, note that 
$\cW_1=\cV \subset \cW$. Assume now that $\cW_r \subset \cW$. Then, as 
$A(\cW \cap \cC)\subset \cW$ and $\cV\subset \cW$, it follows that  
$\cW_{r+1}=\psi(\cW_r)\subset \psi(\cW)=\spancon{\cV\oplus A(\cW \cap \cC)}
\subset \spancon \cW= \cW$.  
 
To prove that $\cV_*(C,A)=\cV_\infty$, it only remains to show that 
$\cV_\infty$ is $(C,A)$-conditioned invariant. Since
\[ 
A(\cV_\infty\cap \cC)=A((\cup_k\cW_k)\cap \cC)=A(\cup_k (\cW_k\cap \cC))= 
\cup_k(A(\cW_k\cap \cC))\subset \cup_k\cW_{k+1}=\cV_\infty \; ,
\]
it follows that $\cV_\infty$ is a $(C,A)$-conditioned invariant congruence.
\end{proof}

Concerning the computation of $\cV_*(C,A)$, 
Proposition~\ref{FixedPointCong} presents two drawbacks in relation 
to the classical theory. In the first place, for linear systems over fields, 
the sequence $\left\{\cW_k\right\}_{k\in \N}$ always converges in at 
most $n$ steps because it is an increasing sequence of subspaces 
of a vector space of dimension $n$. However, in the max-plus case, 
this sequence does not necessarily converge in a finite number of steps 
(see the example below). This difficulty is mainly due to the fact that 
$(\rmax^n)^2$ is not Noetherian, meaning that there exist infinite 
increasing sequences of subsemimodules of $(\rmax^n)^2$. 
The second difficulty comes from the fact the 
$\cV\oplus A(\cW_k \cap \cC)$ need not be a congruence, 
so it is necessary to compute $\spancon{ \cV\oplus A(\cW_k \cap \cC)}$. 
However, the duality results established in the present paper will allow us to 
dispense with this operation.
 
\begin{example}\label{ExampleSeqW}
Consider the matrices 
\[
A =
\begin{pmatrix}
\unit & \zero \cr
\zero & 1 \cr 
\end{pmatrix}
\; \makebox{ and }\; 
C =
\begin{pmatrix}
\zero & \zero  
\end{pmatrix}
\; ,
\]
and the congruence $\cV \subset (\rmax^2)^2$ defined by: 
$x\sim_\cV y$ if, and only if, $x_1=y_1$ and 
$x_1\oplus x_2 = y_1\oplus y_2$. In order to determine 
the minimal $(C,A)$-conditioned invariant congruence containing $\cV$, 
we next compute the sequence of congruences 
$\left\{\cW_k\right\}_{k\in \N}$ defined 
in~\eqref{DefPsi} and~\eqref{DefSequenceW}. 
We claim that $\cW_k$ is defined as follows: 
$x\sim_{\cW_k} y$ if, and only if, $x_1=y_1$ and 
$(k-1) x_1\oplus x_2 =(k-1) y_1\oplus y_2$. In the first place, 
note that this property is satisfied by definition for $k=1$. 
Assume now that it holds for $k=m$. Note that  
\[
x_1=y_1 \; \makebox{ and } \; 
(m-1) x_1\oplus x_2 =(m-1) y_1\oplus y_2 
\] 
is equivalent to 
\[ 
\left( x_1=y_1 , (m-1) x_1\geq x_2 , (m-1) y_1\geq y_2 \right) \; 
\makebox{ or } \; 
\left( x_1=y_1 , x_2=y_2 \right) \; .
\] 
Then, in this particular case $A\cW_m$ is a congruence which is defined by 
\[ 
x\sim_{A\cW_m} y \iff 
\left( x_1=y_1 , m x_1\geq x_2 , m y_1\geq y_2 \right) \; 
\makebox{ or } \; 
\left( x_1=y_1 , x_2=y_2 \right) 
\] 
and thus 
\[
\cW_{m+1}=\psi (\cW_{m})= 
\spancon {\cV\oplus A(\cW_m \cap \cC)}
=\spancon {\cV \oplus A \cW_m}
=\spancon { A \cW_m} = A \cW_m
\]
because $\cC=\ker C=(\rmax^2)^2$ and $\cV \subset A\cW_m$. 
This proves our claim.

Therefore, $\cV_*(C,A)=\cV_\infty$ is the congruence defined as follows: 
\[
x\sim_{\cV_\infty } y \iff 
\left( x_1 = y_1\neq \zero \right) \; \makebox{ or } \; 
\left( x_1 = y_1 = \zero , x_2 = y_2 \right) \; .
\]
Note that $\cV_*(C,A)=\cV_\infty$ is not closed even if $\cV$ is closed. 
For instance, if $\lambda_1\neq \lambda_2$, we have 
\[
(-k,\lambda_1)^t\sim_{\cV_\infty } (-k,\lambda_2)^t
\] 
for all $k\in \N$, but 
$(\zero ,\lambda_1)^t {\not \sim}_{\cV_\infty } (\zero,\lambda_2)^t$.
\end{example}

For linear systems over fields, the minimal $(C,A)$-conditioned 
invariant space containing a given space can be alternatively 
computed through the notion of controlled invariance, 
which is dual of the notion of conditioned invariance.  
In the max-plus case, this dual notion can be defined as follows.

\begin{definition}[Controlled Invariant]\label{DefControlled}
Given $A\in \rmax^{n\times n}$ and $B\in \rmax^{n\times q}$, 
a semimodule $\cX\subset \rmax^n$ is said to be 
$(A,B)$-controlled invariant if 
\begin{equation}A\cX\subset \cX\oplus \cB \; , 
\end{equation}
where $\cB:=\im B$ and 
$\cX\oplus \cB:=\set{x\oplus b}{x\in \cX ,b\in \cB}$.
\end{definition}

\begin{remark}
From a dynamical point of view, the interpretation of 
$(A,B)$-controlled invariance differs from the classical one. 
For linear dynamical systems over fields of the form 
\begin{equation}\label{ABSystem}
x(k+1)=A x(k) + B u(k) \; ,
\end{equation}
where $x(k)$ is the state, $u(k)$ is the control, 
and $A$ and $B$ are matrices of suitable dimension, 
it can be shown (see~\cite{BasMar91,wonham}) that $\cX$ is 
$(A,B)$-controlled invariant if, and only if, 
any trajectory of~\eqref{ABSystem} starting in $\cX$ can 
be kept inside $\cX$ by a suitable choice of the control. 
However, due to the non-invertibility of addition, 
this is no longer true in the max-plus case. 
For this property to hold true, 
in Definition~\ref{DefControlled} the semimodule 
$\cX\oplus \cB$ must be replaced by 
$\cX \ominus \cB := \set{z\in \rmax^n}{\exists b\in \cB , z\oplus b\in \cX}$ 
(see~\cite{katz07} for details).
\end{remark}

The proof of the following simple lemma, which is dual 
of Lemma~\ref{InterConditioned}, is left to the reader. 
 
\begin{lemma}\label{SumControlled}
The (max-plus) sum of $(A,B)$-controlled invariant 
semimodules is $(A,B)$-controlled invariant. 
\end{lemma}

By Lemma~\ref{SumControlled} the set of all 
$(A,B)$-controlled invariant semimodules contained in a given semimodule 
$\cK\subset \rmax^n$, which will be denoted by $\sM(A,B,\cK)$, 
is an upper semilattice with respect to $\subset$ and $\oplus$. 
In this case, $\sM(A,B,\cK)$ admits a biggest element, 
the maximal $(A,B)$-controlled invariant 
semimodule contained in $\cK$, which will be denoted by $\cK^*(A,B)$. 
 
In order to compute $\cK^*(A,B)$, 
consider the self-map $\phi$ of the set of semimodules defined by: 
\begin{equation}\label{DefPhiX}
\phi (\cX) := \cK\cap A^{-1}(\cX \oplus \cB) \; .
\end{equation} 
Define the sequence of semimodules 
$\left\{\cX_k\right\}_{k\in \N}$ as follows: 
\begin{equation}\label{DefSequenceX} 
\cX_1 := \cK \; \makebox{ and } \; 
\cX_{k+1} := \phi(\cX_{k}) \; \makebox{ for }\; k\in \N \; . 
\end{equation}
Note that $\left\{\cX_k\right\}_{k\in \N}$ is (weakly) decreasing, that is, 
$\cX_{k+1}\subset \cX_k$ for all $k\in \N$. As a matter of fact,   
$\cX_2= \phi(\cX_1)= \cK\cap A^{-1}(\cX_1 \oplus \cB)\subset \cK=\cX_1$ 
and if $\cX_{r+1}\subset \cX_r$, then 
$ \cX_{r+2}=\phi(\cX_{r+1})\subset \phi(\cX_r)=\cX_{r+1}$,  
since $\phi(\cZ)\subset \phi(\cY)$ whenever $\cZ \subset \cY$. 
We define the semimodule $\cK^\infty$ as the limit of the sequence 
$\left\{\cX_k\right\}_{k\in \N}$, that is, 
$\cK^\infty =\cap_{k\in \N}\cX_{k}$. 

\begin{lemma}\label{ABsimple}
Any $(A,B)$-controlled invariant semimodule 
contained in $\cK$ is contained in $\cK^\infty$. 
In particular, $\cK^*(A,B)\subset \cK^\infty$.
\end{lemma}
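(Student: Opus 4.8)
The plan is to mirror the (dual) argument used for Proposition~\ref{FixedPointCong}, proving by induction that every $(A,B)$-controlled invariant semimodule $\cX$ contained in $\cK$ satisfies $\cX\subset\cX_k$ for all $k\in\N$, and then taking the intersection over $k$.

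First I would observe that $\phi$ is monotone: if $\cZ\subset\cY$ then $A^{-1}(\cZ\oplus\cB)\subset A^{-1}(\cY\oplus\cB)$, hence $\phi(\cZ)\subset\phi(\cY)$ (this is already noted in the excerpt). Next, let $\cX$ be any $(A,B)$-controlled invariant semimodule with $\cX\subset\cK$. The base case is immediate: $\cX\subset\cK=\cX_1$. For the inductive step, assume $\cX\subset\cX_r$. Since $\cX$ is $(A,B)$-controlled invariant we have $A\cX\subset\cX\oplus\cB$, which is exactly the statement that $\cX\subset A^{-1}(\cX\oplus\cB)$. Combining this with $\cX\subset\cK$ gives $\cX\subset\cK\cap A^{-1}(\cX\oplus\cB)=\phi(\cX)$. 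Now monotonicity of $\phi$ together with the induction hypothesis yields $\phi(\cX)\subset\phi(\cX_r)=\cX_{r+1}$, so $\cX\subset\cX_{r+1}$, completing the induction.

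Finally, since $\cX\subset\cX_k$ for every $k\in\N$, we conclude $\cX\subset\bigcap_{k\in\N}\cX_k=\cK^\infty$. Applying this to $\cX=\cK^*(A,B)$, which is an $(A,B)$-controlled invariant semimodule contained in $\cK$, gives $\cK^*(A,B)\subset\cK^\infty$, as claimed.

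The argument is entirely routine; there is no real obstacle. The only point deserving a moment's care is the equivalence between $A\cX\subset\cX\oplus\cB$ and $\cX\subset A^{-1}(\cX\oplus\cB)$, which is just the definition of the preimage $A^{-1}(\cdot)$ for subsets given earlier, together with the fact that $x\in A^{-1}\cS$ iff $Ax\in\cS$. Note that, unlike the converse inclusion $\cK^\infty\subset\cK^*(A,B)$ (which would require $\cK^\infty$ itself to be $(A,B)$-controlled invariant, and hence presumably a finiteness or termination hypothesis, as in the conditioned case), the inclusion stated here holds unconditionally.
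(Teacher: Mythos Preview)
Your proof is correct and follows essentially the same induction as the paper: show $\cX\subset\cX_k$ for all $k$ via $\cX\subset\phi(\cX)\subset\phi(\cX_r)=\cX_{r+1}$, then intersect. One minor remark on your closing parenthetical: the converse inclusion is obtained in the paper (Proposition~\ref{Kclosed}) under a \emph{topological} hypothesis (closedness of $\cK$), not a finiteness or termination assumption.
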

\begin{proof}
Let $\cX$ be an $(A,B)$-controlled invariant semimodule contained in $\cK$. 
We next prove (by induction on $k$) that $\cX\subset \cX_k$ for all $k\in \N$, 
and thus $\cX\subset \cK^\infty$. In the first place, note that 
$\cX\subset \cK=\cX_{1}$. Assume now that $\cX\subset \cX_r$. Then, 
as $A\cX\subset \cX\oplus \cB $ and $\cX\subset \cK$, it follows that 
$\cX\subset \cK\cap A^{-1}(\cX\oplus \cB) = 
\phi(\cX)\subset \phi(\cX_r)= \cX_{r+1}$. 
\end{proof}
 
In the sequel, we will repeatedly use the following elementary observation. 

\begin{lemma}\label{SumClosed}
If $\cX$ and $\cY$ are closed subsemimodules
of $\rmax^n$, then so is $\cX\oplus \cY$.
\end{lemma}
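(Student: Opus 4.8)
The plan is to argue by sequences, exploiting the fact that in a sum $x\oplus y$ each summand is dominated, coordinatewise, by the sum itself, so that a bounded sum necessarily comes from summands that are bounded above.

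First I would record that $\cX\oplus\cY$ is a subsemimodule of $\rmax^n$: given $x_1\oplus y_1,\, x_2\oplus y_2\in\cX\oplus\cY$ and $\lambda,\mu\in\rmax$, distributivity gives $\lambda(x_1\oplus y_1)\oplus\mu(x_2\oplus y_2)=(\lambda x_1\oplus\mu x_2)\oplus(\lambda y_1\oplus\mu y_2)$, and the two parenthesized vectors lie in $\cX$ and $\cY$ respectively since these are semimodules. This part is routine.

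For closedness, since $\rmax^n$ is metrizable it suffices to prove sequential closedness. I would take $z^{(k)}=x^{(k)}\oplus y^{(k)}$ with $x^{(k)}\in\cX$, $y^{(k)}\in\cY$ and $z^{(k)}\to z$, and observe that $x^{(k)}\le z^{(k)}$ and $y^{(k)}\le z^{(k)}$ for the coordinatewise order, because $\oplus$ is the coordinatewise maximum. As a convergent sequence, $\{z^{(k)}\}$ is bounded, so all the $x^{(k)}$ and $y^{(k)}$ lie in an order interval $I=\set{w\in\rmax^n}{w\le M}$ for a suitable $M\in\rmax^n$. The key observation is that $I$ is compact: under the homeomorphism $w\mapsto(\exp w_1,\dots,\exp w_n)$ of $\rmax^n$ onto $[0,\infty)^n$, the set $I$ corresponds to the box $\prod_{i=1}^{n}[0,\exp M_i]$. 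Passing to a subsequence I may then assume $x^{(k)}\to x$ and $y^{(k)}\to y$ with $x,y\in I$; closedness of $\cX$ and $\cY$ gives $x\in\cX$ and $y\in\cY$, and continuity of $\oplus$ yields $z=\lim_k z^{(k)}=x\oplus y\in\cX\oplus\cY$, as desired.

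The hard part, such as it is, is this compactness observation; once one notices that boundedness of $x\oplus y$ forces each of $x$ and $y$ to be bounded above, and that bounded-above order intervals of $\rmax^n$ are compact, the remaining subsequence extraction and passage to the limit are entirely routine. Note that no finite-generation hypothesis is needed here: only topological closedness of $\cX$ and $\cY$ is used, in contrast with Lemma~\ref{FGClosed}.
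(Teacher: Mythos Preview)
Your proof is correct and follows essentially the same route as the paper's: take a convergent sequence $z_k=x_k\oplus y_k$, note that boundedness of $\{z_k\}$ forces boundedness of $\{x_k\}$ and $\{y_k\}$, extract convergent subsequences, and pass to the limit using closedness of $\cX,\cY$ and continuity of $\oplus$. You simply make the compactness argument more explicit via the exponential homeomorphism with $[0,\infty)^n$, and add a (routine) verification that $\cX\oplus\cY$ is a subsemimodule, which the paper leaves implicit.
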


\begin{proof}
Let $\left\{z_k\right\}_{k\in \N}$ 
denote a sequence of elements of $\cX\oplus \cY$
converging to some $z\in \rmax^n$. Then, 
we can write $z_k=x_k\oplus y_k$ with $x_k\in \cX$ and $y_k\in \cY$ 
for $k\in \N$. Since $\left\{z_k\right\}_{k\in \N}$ is bounded,
$\left\{x_k\right\}_{k\in \N}$ and $\left\{y_k\right\}_{k\in \N}$ 
must be bounded, and so, by taking subsequences
if necessary, we may assume that $\left\{x_k\right\}_{k\in \N}$ and 
$\left\{y_k\right\}_{k\in \N}$ converge to some vectors
$x$ and $y$, respectively. Since $\cX$ and $\cY$ are closed,
we have $x\in \cX$ and $y\in \cY$, and so, $z=x\oplus y\in \cX\oplus \cY$.
\end{proof}

In order to state a dual of Proposition~\ref{FixedPointCong}, 
we shall need a topological assumption. 

\begin{proposition}\label{Kclosed}
Let $\cK\subset \rmax^n$ be a closed semimodule. Then, $\cK^\infty$ is the 
maximal $(A,B)$-controlled invariant semimodule contained in $\cK$.
\end{proposition}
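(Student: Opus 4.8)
The plan is to show that $\cK^\infty$ is $(A,B)$-controlled invariant; combined with Lemma~\ref{ABsimple}, this gives that $\cK^\infty$ is the maximal such semimodule contained in $\cK$. By Lemma~\ref{ABsimple} we already know $\cK^*(A,B)\subset \cK^\infty$, and $\cK^\infty\subset \cK=\cX_1$, so it suffices to verify $A\cK^\infty\subset \cK^\infty\oplus \cB$. First I would record that each $\cX_k$ is closed: this follows by induction, since $\cX_1=\cK$ is closed by hypothesis, $\cB=\im B$ is closed by Lemma~\ref{FGClosed}, $\cX_k\oplus \cB$ is closed by Lemma~\ref{SumClosed}, the preimage $A^{-1}(\cX_k\oplus\cB)$ of a closed set under the continuous map $x\mapsto Ax$ is closed, and the intersection with the closed set $\cK$ stays closed. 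Consequently $\cK^\infty=\cap_k \cX_k$ is closed, being an intersection of closed sets.

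Next I would take $x\in \cK^\infty$ and show $Ax\in \cK^\infty\oplus \cB$. For each $k$ we have $x\in \cX_{k+1}=\cK\cap A^{-1}(\cX_k\oplus \cB)$, hence $Ax\in \cX_k\oplus \cB$, so we can write $Ax=u_k\oplus b_k$ with $u_k\in \cX_k$ and $b_k\in \cB$. Since $\{\cX_k\}$ is decreasing, in fact $u_k\in \cX_j$ for all $j\le k$. The aim is to extract convergent subsequences of $\{u_k\}$ and $\{b_k\}$: because $u_k\oplus b_k=Ax$ is a fixed vector, both sequences are bounded above, and (working coordinatewise, passing to a subsequence for each of the finitely many coordinates that might tend to $-\infty$) we may assume $u_k\to u$ and $b_k\to b$ in $\rmax^n$. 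Then $u\oplus b=Ax$ by continuity of $\oplus$; moreover $b\in \cB$ since $\cB$ is closed, and for each fixed $j$, $u_k\in \cX_j$ for all large $k$, so $u\in \cX_j$ because $\cX_j$ is closed; as this holds for every $j$, $u\in \cK^\infty$. Therefore $Ax=u\oplus b\in \cK^\infty\oplus \cB$, proving $A\cK^\infty\subset \cK^\infty\oplus \cB$, i.e.\ $\cK^\infty$ is $(A,B)$-controlled invariant.

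Finally, $\cK^\infty\subset \cK$ and $\cK^\infty$ is $(A,B)$-controlled invariant, so $\cK^\infty\in \sM(A,B,\cK)$ and hence $\cK^\infty\subset \cK^*(A,B)$; together with Lemma~\ref{ABsimple} this yields $\cK^\infty=\cK^*(A,B)$, as claimed.

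The main obstacle is the compactness/convergence argument in the second paragraph: the decomposition $Ax=u_k\oplus b_k$ is not unique and $u_k,b_k$ need not converge on the nose, so one must argue carefully that boundedness from above (forced by the fixed sum $Ax$) together with the finite dimension lets us pass to subsequences converging in $\rmax^n$, and that the closedness of each $\cX_j$ and of $\cB$ survives the limit. This is precisely where the topological hypothesis that $\cK$ is closed, the closedness of $\cB$ from Lemma~\ref{FGClosed}, and Lemmas~\ref{SumClosed} are needed — in contrast to Proposition~\ref{FixedPointCong}, where no such assumption was required because the argument there did not rely on a limiting process within a single step.
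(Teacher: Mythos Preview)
Your proposal is correct and follows essentially the same route as the paper's proof: both reduce to showing $A\cK^\infty\subset \cK^\infty\oplus\cB$, establish inductively that every $\cX_k$ is closed (via Lemmas~\ref{FGClosed} and~\ref{SumClosed}), decompose the fixed element as $u_k\oplus b_k$ with $u_k\in\cX_k$, $b_k\in\cB$, and use boundedness above to extract convergent subsequences whose limits lie in $\cK^\infty$ and $\cB$ respectively. The only cosmetic difference is that the paper phrases the core step as the set inclusion $\cap_k(\cX_k\oplus\cB)\subset(\cap_k\cX_k)\oplus\cB$ and then applies it, whereas you work directly with the element $Ax$; the underlying argument is identical.
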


\begin{proof}
By Lemma~\ref{ABsimple}, it suffices to show that $\cK^\infty$ 
is $(A,B)$-controlled invariant, that is, 
$A\cK^\infty\subset \cK^\infty\oplus \cB$. With this aim, as 
$A\cK^\infty = A(\cap_k\cX_{k+1}) \subset 
\cap_k A\cX_{k+1}\subset \cap_k (\cX_k\oplus \cB)$, 
it is enough to prove that 
$\cap_k (\cX_k\oplus \cB)\subset (\cap_k\cX_k)\oplus \cB = 
\cK^\infty\oplus \cB$. 
 
In the first place, note that by Lemma~\ref{SumClosed}, 
$\phi(\cX)$ is closed whenever $\cX$ and $\cK$ are closed, 
because $\cB$ is closed by Lemma~\ref{FGClosed}. Then, 
the semimodules $\cX_k$ are all closed since $\cK$ is closed. 
If $x\in \cap_k (\cX_k\oplus \cB)$, there exist sequences 
$\left\{b_k\right\}_{k\in \N}$ and $\left\{x_k\right\}_{k\in \N}$ 
such that $x=x_k\oplus b_k$, $x_k\in \cX_k$ and 
$b_k\in \cB$ for all $k\in \N$. 
As these sequences are bounded by $x$, we may assume, by taking 
subsequences if necessary, that there exist $y\in \rmax^n$ and 
$b\in \cB$ such that $\lim_{k\rightarrow\infty}x_k=y$ and 
$\lim_{k\rightarrow\infty}b_k=b$ (recall that $\cB$ is closed 
by Lemma~\ref{FGClosed}). Then, 
as the sequence $\left\{\cX_k\right\}_{k\in \N}$ 
is decreasing and the semimodules $\cX_k$ are all closed, 
it follows that $y=\lim_{k\rightarrow\infty}x_k\in \cX_r$ for all $r\in \N$. 
Therefore, $y\in \cap_k \cX_k=\cK^\infty$ and 
$x=\lim_{k\rightarrow\infty}(x_k\oplus b_k)=
(\lim_{k\rightarrow\infty}x_k)\oplus (\lim_{k\rightarrow\infty}b_k)=
y\oplus b\in \cK^\infty\oplus \cB$.
\end{proof}
 
Observe that, by Lemma~\ref{FGClosed}, the condition of the previous 
proposition is in particular satisfied when $\cK$ is finitely generated. 
Note also that $\cK^*(A,B)=\cK^\infty$ is closed if $\cK$ is closed, 
because in that case $\cK^\infty$ is an intersection of closed semimodules 
(recall that in the previous proof we showed that the semimodules 
$\cX_k$ are all closed when $\cK$ is closed). 

Like in the case of the sequence of congruences 
$\left\{\cW_k\right\}_{k\in \N}$, and unlike the case 
of coefficients in a field in which it converges in at most 
$n$ steps (see~\cite{BasMar69,BasMar91,wonham}), 
the sequence of semimodules $\left\{\cX_k\right\}_{k\in \N}$ 
does not necessarily converge in a finite number of steps 
(see the example below). 
This is in part a consequence of the fact that $\rmax^n$ is not Artinian, 
meaning that there exist infinite decreasing sequences 
of subsemimodules of $\rmax^n$. However, 
in Section~\ref{SDuality} we will give a condition which ensures 
the convergence of this sequence in a finite number of steps. 
This difficulty is also found when the coefficients belong to a ring, 
where except for Principal Ideal Domains, 
the computation of the maximal $(A,B)$-controlled invariant module
is still under investigation (see~\cite{conte94,conte95}).

\begin{example}\label{ExampleSeqX}
Consider the matrices 
\[
A =
\begin{pmatrix}
\unit & \zero \cr
\zero & 1 \cr 
\end{pmatrix}
\; \makebox{ and }\; 
B =
\begin{pmatrix}
\zero \cr 
\zero  
\end{pmatrix}
\; ,
\]
and the semimodule $\cK =\set{x\in \rmax^2}{x_1\geq x_2}$. 
Since $\cK$ is clearly closed, we can apply Proposition~\ref{Kclosed} 
in order to compute $\cK^*(A,B)$. If we define the sequence of 
semimodules $\left\{\cX_k\right\}_{k\in \N}$ by~\eqref{DefPhiX} 
and~\eqref{DefSequenceX}, then, using the fact that in this 
particular case $A$ is invertible and that  
\[
A^{-1} =
\begin{pmatrix}
\unit & \zero \cr
\zero & -1 \cr 
\end{pmatrix} \; ,  
\]
it can be easily seen that $\cX_k=\set{x\in \rmax^2}{x_1\geq (k-1)x_2}$ 
for all $k\in \N$. Therefore, 
$\cK^*(A,B)=\cK^\infty=\set{x\in \rmax^2}{x_2=\zero}$.
\end{example}

\section{Orthogonal semimodules and congruences}\label{SOrthogonal}

Before studying the duality between controlled and conditioned invariance, 
it is convenient to introduce the notions of orthogonal of 
semimodules and congruences, and study their properties.

\begin{definition}
The orthogonal of a semimodule $\cX\subset \rmax^n$ is the congruence 
$\cX^\bot =\set{(x,y)\in (\rmax^n)^2}{x^tz=y^tz,\forall z\in \cX}$. 
Analogously, the orthogonal of a congruence (or more generally a semimodule) 
$\cW\subset (\rmax^n)^2$ is the semimodule 
$\cW^\top =\set{z\in \rmax^n}{x^tz=y^tz,\forall (x,y)\in \cW}$.
\end{definition}

Note that the orthogonal, being the intersection of 
closed sets, is always closed. We shall need the following duality theorem.

\begin{theorem}\label{SepTheo}
If $\cX\subset \rmax^n$ is a closed semimodule, and if
$\cW\subset (\rmax^n)^2$ is a closed congruence, then:
\[
\cX=(\cX^\bot)^\top,\; \makebox{ and }
\; \cW=(\cW^\top)^\bot \; .
\]
\end{theorem}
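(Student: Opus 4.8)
The plan is to prove the two identities separately, each amounting to one nontrivial inclusion since the reverse inclusions are immediate from the definitions. For the first identity $\cX=(\cX^\bot)^\top$, note that $z\in(\cX^\bot)^\top$ means $x^tz=y^tz$ for every pair $(x,y)\in\cX^\bot$, i.e. for every pair with $x^tw=y^tw$ for all $w\in\cX$. The inclusion $\cX\subset(\cX^\bot)^\top$ is clear: if $z\in\cX$ and $(x,y)\in\cX^\bot$ then by definition $x^tz=y^tz$. The work is in the converse: I must show that a vector $z$ separating every $\cX^\bot$-pair lies in the closed semimodule $\cX$.

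The key tool I would invoke is a max-plus separation/duality theorem for closed semimodules (in the spirit of Cohen--Gaubert--Quadrat and Zimmermann): a closed semimodule $\cX\subset\rmax^n$ equals the intersection, over all ``max-plus linear inequalities'' it satisfies, of the corresponding half-spaces; equivalently, if $z\notin\cX$ there exist vectors $u,v\in\rmax^n$ with $u^tw\le v^tw$ for all $w\in\cX$ but $u^tz>v^tz$ (here products and comparisons are in $\rmax$). Given such a separating pair, set $x:=u\oplus v$ (coordinatewise max) and $y:=v$; then for all $w\in\cX$ we have $x^tw=(u\oplus v)^tw=u^tw\oplus v^tw=v^tw=y^tw$ because $u^tw\le v^tw$, so $(x,y)\in\cX^\bot$; but $x^tz=u^tz\oplus v^tz=u^tz>v^tz=y^tz$ since $u^tz>v^tz$. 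Hence $z\notin(\cX^\bot)^\top$. This proves $(\cX^\bot)^\top\subset\cX$, completing the first identity. The same ``turn an inequality into an equality of pairs by taking a max'' trick is the heart of the duality between semimodules and congruences here.

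For the second identity $\cW=(\cW^\top)^\bot$, again one inclusion is trivial: if $(x,y)\in\cW$ then $x^tz=y^tz$ for all $z\in\cW^\top$ by definition, so $(x,y)\in(\cW^\top)^\bot$. For the converse I would view $\cW$ as a closed subsemimodule of $(\rmax^n)^2\cong\rmax^{2n}$ and apply the first identity in dimension $2n$: the orthogonal $\cW^\perp$ there is a closed congruence $\widetilde\cW\subset(\rmax^{2n})^2$, and $\cW=(\widetilde\cW)^\top$. The remaining task is the bookkeeping that identifies $\widetilde\cW^\top$, computed intrinsically in $\rmax^{2n}$, with $(\cW^\top)^\bot$ as defined via the splitting into two copies of $\rmax^n$ — i.e. checking that the bilinear pairing $(x,y)^t(z,z')=x^tz\oplus y^tz'$ restricted appropriately recovers the pairing used in the statement. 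Concretely, $(x,y)\in(\cW^\top)^\bot$ says $x^tz=y^tz$ for all $z\in\cW^\top$, while membership in $\widetilde\cW^\top$ (after the dimension-$2n$ duality) must be shown equivalent to this; the passage from pairs $(z,z)$ on the diagonal to the full pairing needs a short argument, again supplied by the same separation theorem applied to the closed semimodule $\cW\subset\rmax^{2n}$.

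The main obstacle is the separation theorem itself: everything reduces to the statement that a point outside a closed max-plus cone can be separated from it by an affine (max-plus linear) functional, and then to the elementary but slightly fiddly translation of such a one-sided separation into a two-sided equality constraint of a pair of vectors. I would expect the paper to cite an existing max-plus Hahn--Banach-type separation result (as it does for Lemma~\ref{FGClosed}) and then spend the proof on the pair-construction $x=u\oplus v$, $y=v$ and its transpose version, together with the reduction of the congruence case to the semimodule case in doubled dimension. The topological hypothesis (closedness of $\cX$ and $\cW$) is exactly what makes the separation theorem applicable, and Example~\ref{ExampleSeqW} shows it cannot be dropped.
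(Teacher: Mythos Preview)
The paper does not actually prove this theorem: it simply cites the first equality from the separation theorem for closed semimodules (Zimmermann, Samborski\u\i--Shpiz, Cohen--Gaubert--Quadrat--Singer), and cites the second equality from~\cite{gk08u} as a consequence of a \emph{new} separation theorem ``which applies to closed congruences.'' So your proposal goes further than the paper by attempting a self-contained argument.

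Your treatment of the first identity is correct and is exactly the intended derivation from half-space separation: the trick $x:=u\oplus v$, $y:=v$ turning a separating inequality into a separating equality-pair is the right move.

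For the second identity, however, there is a genuine gap. Applying the first identity in dimension $2n$ to the closed semimodule $\cW\subset\rmax^{2n}$ tells you that if $(x_0,y_0)\notin\cW$ then there is a pair $\bigl((a,b),(a',b')\bigr)\in\cW^{\perp}\subset(\rmax^{2n})^2$ with $x_0^t a\oplus y_0^t b\neq x_0^t a'\oplus y_0^t b'$. What you need, though, is a single vector $z\in\cW^\top\subset\rmax^n$ with $x_0^t z\neq y_0^t z$. Extracting such a $z$ from the pair $\bigl((a,b),(a',b')\bigr)$ is \emph{not} bookkeeping: your first-part trick goes the wrong way (it manufactures a pair from a half-space, not a half-space from a pair), and the reduction must exploit the congruence axioms (reflexivity, symmetry, transitivity) in an essential way. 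For instance, reflexivity forces $a\oplus b=a'\oplus b'$, but this alone does not hand you a separating $z$. This step is precisely the content of the separation theorem for closed congruences in~\cite{gk08u} that the paper invokes; it is a separate result, not a corollary of the semimodule case in doubled dimension.
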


The first equality follows from the separation theorem
for closed semimodules, see~\cite[Th.~4]{zimmerman77},~\cite{shpiz}, 
see also~\cite[Th.~3.14]{cgqs04} for recent improvements. The second equality
is proved in~\cite{gk08u} as a consequence of a new separation theorem,
which applies to closed congruences.

The orthogonal has the following properties. 

\begin{lemma}\label{propiedad}
Let $A\in \rmax^{n\times n}$ be a matrix, 
$\cW,\cW_1,\cW_2 \subset (\rmax^n)^2$ be congruences and 
$\cX,\cX_1,\cX_2\subset \rmax^n$ be semimodules. 
Then, 
\begin{itemize}
\item[(i)]\label{ph1} $(\cW_1\oplus \cW_2)^\top= \cW_1^\top \cap \cW_2^\top$ 
\makebox{ and } \; $(\cX_1\oplus \cX_2)^\bot= \cX_1^\bot \cap \cX_2^\bot$  ,
\item[(ii)]\label{ph2} $(A\cW)^\top= (A^t)^{-1}\cW^\top$ 
\makebox{ and } \; $(A\cX)^\bot= (A^t)^{-1}\cX^\bot$ .
\end{itemize}
Moreover, if $\cW_1$, $\cW_2$, $\cX_1$ and $\cX_2$ are closed, then
\begin{itemize}
\item[(iii)]\label{ph3} $(\cW_1\cap \cW_2)^\top= \cW_1^\top \oplus \cW_2^\top$
\makebox{ and } \; $(\cX_1\cap \cX_2)^\bot= \cX_1^\bot \oplus \cX_2^\bot$ .
\end{itemize}
\end{lemma}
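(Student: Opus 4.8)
The plan is to prove each pair of identities by a direct double-inclusion argument, exploiting the bilinearity (in the max-plus sense) of the pairing $(x,z)\mapsto x^tz$ and using Theorem \ref{SepTheo} only where a closedness hypothesis is available. I will treat the semimodule/congruence versions of each item in parallel, since the arguments are formally identical; in fact, writing $\cX^\bot$ for a semimodule and $\cW^\top$ for a congruence, both orthogonals are instances of the same construction (the annihilator with respect to the pairing), so it suffices to establish, for subsets $\cS,\cT$ of $\rmax^n$ (resp. of $(\rmax^n)^2$), the three assertions: $(\smallvect(\cS\cup\cT))^\sharp=\cS^\sharp\cap\cT^\sharp$; $(A\cS)^\sharp=(A^t)^{-1}\cS^\sharp$; and $(\cS\cap\cT)^\sharp=\cS^\sharp\oplus\cT^\sharp$ when $\cS,\cT$ are closed, where $\sharp$ denotes the appropriate orthogonal. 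Note that $\cX_1\oplus\cX_2=\smallvect(\cX_1\cup\cX_2)$ and likewise $\cW_1\oplus\cW_2=\smallvect(\cW_1\cup\cW_2)$, so (i) is exactly the first assertion.

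For (i), the inclusion $\subset$ is immediate since $\cX_i\subset\cX_1\oplus\cX_2$ forces $(\cX_1\oplus\cX_2)^\bot\subset\cX_i^\bot$; for $\supset$, if $(x,y)$ annihilates both $\cX_1$ and $\cX_2$, then for $z=z_1\oplus z_2$ with $z_i\in\cX_i$ one has $x^tz=x^tz_1\oplus x^tz_2=y^tz_1\oplus y^tz_2=y^tz$ by distributivity, and the scalar multiples cause no trouble. The congruence case is identical. For (ii), I would compute directly: $(x,y)\in(A\cX)^\bot$ means $x^t(Az)=y^t(Az)$ for all $z\in\cX$, i.e. $(A^tx)^tz=(A^ty)^tz$ for all $z\in\cX$, i.e. $(A^tx,A^ty)\in\cX^\bot$, i.e. $(x,y)\in(A^t)^{-1}\cX^\bot$; again the congruence version is the same computation with the roles of the two slots swapped.

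The main obstacle is (iii), where the two inclusions are of unequal difficulty. The inclusion $\cX_1^\bot\oplus\cX_2^\bot\subset(\cX_1\cap\cX_2)^\bot$ is elementary: each $\cX_i^\bot\subset(\cX_1\cap\cX_2)^\bot$ because $\cX_1\cap\cX_2\subset\cX_i$, and $(\cX_1\cap\cX_2)^\bot$ is a congruence hence closed under $\oplus$. The reverse inclusion is where closedness is essential, and the natural route is to dualize: apply the already-proved item (i) to the closed semimodules $\cX_1^\bot,\cX_2^\bot$ (they are closed, being orthogonals) to get $(\cX_1^\bot\oplus\cX_2^\bot)^\top=(\cX_1^\bot)^\top\cap(\cX_2^\bot)^\top=\cX_1\cap\cX_2$, using Theorem \ref{SepTheo} for the last equality since $\cX_1,\cX_2$ are closed. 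Now $\cX_1^\bot\oplus\cX_2^\bot$ is closed by Lemma \ref{SumClosed}, so applying the second identity of Theorem \ref{SepTheo} gives $\cX_1^\bot\oplus\cX_2^\bot=\bigl((\cX_1^\bot\oplus\cX_2^\bot)^\top\bigr)^\bot=(\cX_1\cap\cX_2)^\bot$, which is the desired equality (and a fortiori the missing inclusion). The congruence version of (iii) runs the same way, with $\top$ and $\bot$ interchanged, Lemma \ref{SumClosed} replaced by the observation that the $\oplus$ of two closed congruences is a closed subsemimodule of $(\rmax^n)^2$ (same proof as Lemma \ref{SumClosed}), and Theorem \ref{SepTheo} applied in the form $\cW=(\cW^\top)^\bot$. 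The delicate point to check is precisely that all the objects to which Theorem \ref{SepTheo} is applied are genuinely closed — the orthogonals always are, and the sums are by the Lemma \ref{SumClosed}-type argument — so no further hypotheses are needed.
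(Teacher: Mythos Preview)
Your arguments for (i), (ii), and the congruence half of (iii) are correct and coincide with the paper's approach (the paper proves only the congruence versions explicitly, relegating the semimodule versions to ``along the same lines'').

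There is, however, a genuine gap in your treatment of the semimodule half of (iii). To obtain $\cX_1^\bot\oplus\cX_2^\bot=\bigl((\cX_1^\bot\oplus\cX_2^\bot)^\top\bigr)^\bot$ you invoke the second identity of Theorem~\ref{SepTheo}, but that identity is stated only for closed \emph{congruences}; you verify closedness but never the congruence property (transitivity) of $\cX_1^\bot\oplus\cX_2^\bot$. This is not a mere oversight that can be patched: the sum of two congruences need not be transitive, and in fact the semimodule identity in (iii) is false as stated. For a counterexample take $n=3$, $\cX_1=\smallvect\{(0,0,\varepsilon)^t\}$ and $\cX_2=\smallvect\{(\varepsilon,0,0)^t\}$, both finitely generated hence closed. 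Then $\cX_1^\bot=\{(x,y):x_1\oplus x_2=y_1\oplus y_2\}$, $\cX_2^\bot=\{(x,y):x_2\oplus x_3=y_2\oplus y_3\}$, and $\cX_1\cap\cX_2=\{\varepsilon\}$, so $(\cX_1\cap\cX_2)^\bot=(\rmax^3)^2$. But $\bigl((0,0,0)^t,\varepsilon\bigr)\notin\cX_1^\bot\oplus\cX_2^\bot$: writing $\varepsilon=v\oplus v'$ forces $v=v'=\varepsilon$, whence $(u,v)\in\cX_1^\bot$ gives $u_1=u_2=\varepsilon$ and $(u',v')\in\cX_2^\bot$ gives $u'_2=u'_3=\varepsilon$, so $(u\oplus u')_2=\varepsilon\neq 0$. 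Since $(\cS^\top)^\bot$ is always a congruence, the failure of transitivity alone already shows $\cX_1^\bot\oplus\cX_2^\bot\neq\bigl((\cX_1^\bot\oplus\cX_2^\bot)^\top\bigr)^\bot$. The paper's remark that the semimodule case follows ``along the same lines'' overlooks the very same obstruction; fortunately the remainder of the paper uses only the congruence half of (iii), so the main results are unaffected.
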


\begin{proof}

We next prove these properties for congruences. In the case of semimodules, 
these properties can be proved along the same lines. 

(i) As $\cW_r\subset \cW_1\oplus \cW_2$ for $r=1,2$, we have 
$(\cW_1\oplus \cW_2)^\top\subset \cW_r^\top$ for $r=1,2$ and thus 
$(\cW_1\oplus \cW_2)^\top\subset \cW_1^\top \cap \cW_2^\top$. 

Let $z\in \cW_1^\top \cap \cW_2^\top$. Since 
$$(x_1\oplus x_2)^t z = x_1^t z\oplus x_2^t z = 
y_1^t z \oplus y_2^t z = (y_1\oplus y_2)^t z$$
for all $(x_1,y_1)\in \cW_1$ and $(x_2,y_2)\in \cW_2$, 
it follows that $z\in (\cW_1\oplus \cW_2)^\top$. Therefore, 
$\cW_1^\top \cap \cW_2^\top \subset (\cW_1\oplus \cW_2)^\top$. 

(ii) We have
\begin{eqnarray*}
(A\cW)^\top & = & \set{z\in \rmax^n}{x^tz=y^tz,\forall (x,y)\in A\cW} \\
& = & \set{z\in \rmax^n}{(Ax)^tz=(Ay)^tz,\forall (x,y)\in \cW} \\
& = & \set{z\in \rmax^n}{x^tA^tz=y^tA^tz,\forall (x,y)\in \cW} \\
& = & \set{z\in \rmax^n}{A^tz\in \cW^\top}=(A^t)^{-1}\cW^\top\; .
\end{eqnarray*}
 
(iii) Since $\cW_1$ and $\cW_2$ are closed, by Theorem~\ref{SepTheo} 
we have $\cW_1=(\cW_1^\top )^\bot $ and $\cW_2=(\cW_2^\top)^\bot$. 
Then, from (i) and Theorem~\ref{SepTheo}, it follows that
\[ 
(\cW_1\cap \cW_2)^\top= ((\cW_1^\top)^\bot \cap (\cW_2^\top)^\bot)^\top= 
((\cW_1^\top \oplus \cW_2^\top)^\bot)^\top= \cW_1^\top \oplus \cW_2^\top \; , 
\]
because $\cW_1^\top \oplus \cW_2^\top$ is closed by Lemma~\ref{SumClosed}. 
\end{proof}

In Property (iii) above, when the semimodules $\cX_1$ and $\cX_2$ 
are not closed, the only thing that can be said is that
\[ 
\cX_1^\bot \oplus \cX_2^\bot \subset (\cX_1\cap \cX_2)^\bot \; .
\]
As a matter of fact, since $\cX_1\cap \cX_2\subset \cX_r$ for $r=1,2$, 
it follows that $\cX_r^\bot \subset (\cX_1\cap \cX_2)^\bot$ for $r=1,2$ 
and so $\cX_1^\bot \oplus \cX_2^\bot \subset (\cX_1\cap \cX_2)^\bot$. 
To see that the other inclusion does not necessarily hold, 
consider the semimodules $\cX_1=\set{x\in \rmax^2}{x_1=x_2}$ and 
$\cX_2=\set{x\in \rmax^2}{x_1>x_2}\cup \left\{ (\zero, \zero)^t\right\}$. 
Then, $\cX_1^\bot=\set{(x,y)\in (\rmax^2)^2}{x_1\oplus x_2=y_1\oplus y_2}$
and 
$\cX_2^\bot=\set{(x,y)\in (\rmax^2)^2}{x_1=y_1,x_1\oplus x_2=y_1\oplus y_2}$,
thus $\cX_1^\bot \oplus \cX_2^\bot=\cX_1^\bot \varsubsetneq (\rmax^2)^2$ 
because $\cX_2^\bot \subset \cX_1^\bot $. However, 
$(\cX_1\cap \cX_2)^\bot=\left\{ (\zero, \zero)^t\right\}^\bot =(\rmax^2)^2$. 

\begin{lemma}\label{LemmaFG}
For any matrix $E$ we have 
$(\im E)^\bot=\ker E^t$ 
and $(\ker E)^\top =\im E^t$.
\end{lemma}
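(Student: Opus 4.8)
The plan is to prove the two identities $(\im E)^\bot=\ker E^t$ and $(\ker E)^\top=\im E^t$ directly from the definitions, unwinding the quantifiers and using only the bilinearity of the pairing $(x,z)\mapsto x^tz$ over the max-plus semiring. I would treat the first identity in full detail and observe that the second follows by the same kind of manipulation (or, where convenient, by invoking the duality already established).

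For the first identity, suppose $E\in\rmax^{p\times n}$, so that $\im E\subset\rmax^p$ and hence $(\im E)^\bot\subset(\rmax^p)^2$, while $\ker E^t\subset(\rmax^p)^2$ as well. Take $(x,y)\in(\rmax^p)^2$. By definition $(x,y)\in(\im E)^\bot$ means $x^tz=y^tz$ for every $z\in\im E$, i.e.\ $x^t(Eu)=y^t(Eu)$ for every $u\in\rmax^n$. Using associativity of matrix multiplication this reads $(x^tE)u=(y^tE)u$ for all $u\in\rmax^n$; equivalently $(E^tx)^tu=(E^ty)^tu$ for all $u$. The key point is that a vector $v\in\rmax^n$ is determined by the linear functional $u\mapsto v^tu$: taking $u=e_j$ (the $j$-th unit vector, with a $\unit$ in position $j$ and $\zero$ elsewhere) gives $v^te_j=v_j$, so $(E^tx)^tu=(E^ty)^tu$ for all $u$ forces $E^tx=E^ty$, i.e.\ $(x,y)\in\ker E^t$. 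Conversely, if $E^tx=E^ty$ then clearly $(x^tE)u=(y^tE)u$ for all $u$, hence $x^tz=y^tz$ for all $z\in\im E$, so $(x,y)\in(\im E)^\bot$. This proves $(\im E)^\bot=\ker E^t$.

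For the second identity, $(\ker E)^\top=\set{z\in\rmax^n}{x^tz=y^tz\ \forall (x,y)\in\ker E}$. One inclusion is immediate: if $z=E^tw$ for some $w$, then for $(x,y)\in\ker E$ we have $x^tz=x^tE^tw=(Ex)^tw=(Ey)^tw=y^tz$, so $\im E^t\subset(\ker E)^\top$. For the reverse inclusion I would use the separation machinery: $\im E^t$ is finitely generated, hence closed by Lemma~\ref{FGClosed}, and $\ker E$ is a closed congruence (it is $(\im E)^\top$? — more directly, $\ker E=\set{(x,y)}{Ex=Ey}$ is closed since the maps $x\mapsto Ex$ are continuous); then by Theorem~\ref{SepTheo}, $(\ker E)^\top=((\im E^t)^\bot)^\top$ using the first identity applied to $E^t$ (namely $(\im E^t)^\bot=\ker (E^t)^t=\ker E$), and $((\im E^t)^\bot)^\top=\im E^t$ because $\im E^t$ is closed. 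Hence $(\ker E)^\top=\im E^t$.

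The main obstacle, and the only genuinely non-trivial ingredient, is the reverse inclusion $(\ker E)^\top\subset\im E^t$: unlike the first identity, it does not follow by elementary quantifier juggling, and it truly requires the biduality theorem (Theorem~\ref{SepTheo}) together with the closedness of finitely generated semimodules (Lemma~\ref{FGClosed}) and of kernels. Everything else is a routine check that the bilinear pairing separates points of $\rmax^n$, which is handled by evaluating on unit vectors as above.
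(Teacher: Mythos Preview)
Your proof is correct and follows essentially the same route as the paper: the first identity is proved by unwinding definitions (you add the explicit unit-vector argument for why $(E^tx)^tu=(E^ty)^tu$ for all $u$ forces $E^tx=E^ty$, which the paper leaves implicit), and the second identity is obtained by applying the first to $E^t$ and invoking biduality (Theorem~\ref{SepTheo}) for the closed semimodule $\im E^t$. One small remark: your aside about $\ker E$ being a closed congruence is unnecessary for the argument as written, since the instance of Theorem~\ref{SepTheo} you actually use is $((\im E^t)^\bot)^\top=\im E^t$, which only requires $\im E^t$ to be closed; you may want to trim that digression.
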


\begin{proof}
Note that $(x,y)\in (\im E)^\bot \iff x^tz=y^tz, \forall z\in \im E 
\iff x^tEv=y^tEv, \forall v \iff E^tx=E^ty \iff (x,y)\in \ker E^t$. 
Therefore, $(\im E)^\bot =\ker E^t$. 

Since by Lemma~\ref{FGClosed} $\im E^t$ is closed, we have 
$((\im E^t)^\bot)^\top =\im E^t$. Then, as $(\im E^t)^\bot =\ker E$, 
it follows that $(\ker E)^\top =\im E^t$.
\end{proof}

\section{Duality between conditioned and controlled invariance}\label{SDuality}

In this section we investigate the duality between controlled 
and conditioned invariants in max-plus algebra.

\begin{lemma}\label{conditionedcontrolled}
If $\cX\subset \rmax^n$ is $(A,B)$-controlled invariant, 
then $\cX^\bot$ is $(B^t,A^t)$-conditioned invariant. 
Moreover, if a closed congruence $\cW\subset (\rmax^n)^2$ 
is $(C,A)$-conditioned invariant, then 
$\cW^\top$ is $(A^t,C^t)$-controlled invariant.
\end{lemma}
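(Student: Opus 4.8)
The plan is to deduce both implications from the orthogonality calculus of Lemma~\ref{propiedad}, together with the transposition identities $\ker E^t=(\im E)^\bot$ and $(\ker E)^\top=\im E^t$ of Lemma~\ref{LemmaFG}. Once these are available, each half reduces to testing membership in an orthogonal against a single vector (resp. a single pair), and then moving $A$ to the other side through the identity $x^t(Az)=(A^tx)^tz$. The two halves mirror each other under the exchanges $\oplus\leftrightarrow\cap$ and $\bot\leftrightarrow\top$.

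\emph{From controlled to conditioned invariance.} Assume $\cX\subset\rmax^n$ is $(A,B)$-controlled invariant, i.e. $A\cX\subset\cX\oplus\cB$ with $\cB=\im B$. Since $\cX^\bot$ is by construction a congruence, what must be shown is $A^t(\cX^\bot\cap\ker B^t)\subset\cX^\bot$. First I would rewrite the left-hand side: by Lemma~\ref{LemmaFG}, $\ker B^t=\cB^\bot$, and then by Lemma~\ref{propiedad}(i), $\cX^\bot\cap\cB^\bot=(\cX\oplus\cB)^\bot$, so the goal becomes $A^t(\cX\oplus\cB)^\bot\subset\cX^\bot$. To establish it, take any $(x,y)\in(\cX\oplus\cB)^\bot$; to see that $(A^tx,A^ty)\in\cX^\bot$ one checks, for an arbitrary $z\in\cX$, that $(A^tx)^tz=x^t(Az)=y^t(Az)=(A^ty)^tz$, the middle equality holding because $Az\in A\cX\subset\cX\oplus\cB$ and $(x,y)$ is orthogonal to every element of $\cX\oplus\cB$. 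No closedness of $\cX$ is needed for this half.

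\emph{From conditioned to controlled invariance.} Assume now that $\cW\subset(\rmax^n)^2$ is a \emph{closed} congruence which is $(C,A)$-conditioned invariant, i.e. $A(\cW\cap\cC)\subset\cW$ with $\cC=\ker C$. Since $\cW^\top$ is a semimodule, it must be shown that $A^t\cW^\top\subset\cW^\top\oplus\im C^t$. The first step is to rewrite the right-hand side: by Lemma~\ref{LemmaFG}, $\im C^t=\cC^\top$ and $\cC=\ker C=(\im C^t)^\bot$, which is closed since orthogonals are always closed; hence, as $\cW$ is closed too, Lemma~\ref{propiedad}(iii) yields $\cW^\top\oplus\im C^t=\cW^\top\oplus\cC^\top=(\cW\cap\cC)^\top$. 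It therefore suffices to prove $A^t\cW^\top\subset(\cW\cap\cC)^\top$. To do so, take $z\in\cW^\top$; to see that $A^tz\in(\cW\cap\cC)^\top$ one checks, for an arbitrary $(x,y)\in\cW\cap\cC$, that $x^t(A^tz)=(Ax)^tz=(Ay)^tz=y^t(A^tz)$, where the middle equality holds because $(Ax,Ay)\in A(\cW\cap\cC)\subset\cW$ by conditioned invariance, while $z\in\cW^\top$ is orthogonal to every pair in $\cW$.

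\emph{Main difficulty.} Both computations are short; the only point that requires care is the use of closedness in the second half. Lemma~\ref{propiedad}(iii) needs both $\cW$ and $\cC=\ker C$ to be closed: the former is the hypothesis, and the latter is ensured by Lemma~\ref{LemmaFG} (an orthogonal is closed) or, alternatively, because $\ker C$ is the preimage of the diagonal of $(\rmax^q)^2$ under the continuous map $(x,y)\mapsto(Cx,Cy)$. This is precisely the asymmetry worth flagging: the passage from controlled to conditioned invariance is unconditional, whereas its converse genuinely relies on the separation machinery behind Theorem~\ref{SepTheo} (invoked through Lemma~\ref{propiedad}(iii)), so the closedness hypothesis on $\cW$ cannot be dropped.
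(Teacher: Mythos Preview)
Your proof is correct and follows essentially the same route as the paper's. The only cosmetic difference is that where you verify the inclusions $A^t(\cX\oplus\cB)^\bot\subset\cX^\bot$ and $A^t\cW^\top\subset(\cW\cap\cC)^\top$ by a direct element chase using $x^t(Az)=(A^tx)^tz$, the paper instead invokes Lemma~\ref{propiedad}(ii), namely $(A\cX)^\bot=(A^t)^{-1}\cX^\bot$ and $(A\cW)^\top=(A^t)^{-1}\cW^\top$, to reach the same conclusions; your discussion of why closedness is needed in the second half (to justify Lemma~\ref{propiedad}(iii)) is accurate and matches the paper's implicit use.
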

\begin{proof}
If $\cX$ is $(A,B)$-controlled invariant, 
then $A\cX\subset \cX \oplus \im B$. Since 
\begin{eqnarray*}
A\cX\subset \cX \oplus \im B &\implies &
(\cX \oplus \im B)^\bot \subset (A\cX)^\bot \\ 
&\implies & \cX^\bot \cap (\im B)^\bot \subset (A^t)^{-1}\cX^\bot \\
&\implies & A^t (\cX^\bot \cap \ker B^t)\subset \cX^\bot, 
\end{eqnarray*}
it follows that $\cX^\bot$ is $(B^t,A^t)$-conditioned invariant. 

Assume that a closed congruence $\cW$ is $(C,A)$-conditioned invariant, 
that is $A (\cW \cap \ker C)\subset \cW$. Then, by Lemma~\ref{propiedad} 
we have $(\cW \cap \ker C)^\top =\cW^\top \oplus (\ker C)^\top$ 
and thus
\begin{eqnarray*}
A (\cW \cap \ker C)\subset \cW &\implies &
\cW^\top \subset (A(\cW \cap \ker C))^\top \\  
& \implies & \cW^\top \subset (A^t)^{-1} (\cW \cap \ker C)^\top\\
&\implies & A^t\cW^\top \subset 
(\cW \cap \ker C)^\top =\cW^\top \oplus \im C^t \enspace .
\end{eqnarray*}
Therefore, $\cW^\top$ is $(A^t,C^t)$-controlled invariant. 
\end{proof}

The following duality theorem establishes a bijective correspondence between 
closed controlled invariant semimodules and closed conditioned invariant 
congruences. This is the basis of the algorithmic results which follow, 
since dealing with invariant semimodules is technically simpler than dealing 
with invariant congruences (because, in particular, the later objects show a 
``doubling'' of the dimension). It is worth mentioning that for systems 
with coefficients in a ring the duality between controlled and 
conditioned invariant modules does not hold in general~\cite{DLLL08}. 
For systems in infinite dimensions on a Hilbert space as well, 
closeness is instrumental for obtaining duality results~\cite{Cu86}. 
In this case other hypothesis are necessary concerning the domain of the operators and their boundedness. 

\begin{theorem}[Duality theorem]\label{TheoCondContr}
Let $\cV\subset (\rmax^ n)^2$ be a congruence. 
If $\cW$ is a closed congruence, then 
\[
\cW\in \sL(C,A,\cV)\iff \cW^\top\in \sM(A^t,C^t,\cV^\top) \enspace .
\]
\end{theorem}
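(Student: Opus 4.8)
The plan is to prove the two implications separately, relying on Lemma~\ref{conditionedcontrolled} for the "forward" direction and on the double-orthogonality identities of Theorem~\ref{SepTheo} to invert the correspondence. First I would handle the direction $\cW\in\sL(C,A,\cV)\implies\cW^\top\in\sM(A^t,C^t,\cV^\top)$. Here $\cW$ is a closed congruence that is $(C,A)$-conditioned invariant and contains $\cV$; I need $\cW^\top$ to be $(A^t,C^t)$-controlled invariant and contained in $\cV^\top$. The first part is exactly the second assertion of Lemma~\ref{conditionedcontrolled} (which requires $\cW$ closed, a hypothesis we have). The inclusion $\cW^\top\subset\cV^\top$ follows by applying $\top$ to $\cV\subset\cW$: orthogonality is inclusion-reversing, so $\cW^\top\subset\cV^\top$ directly, with no closedness needed.

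Next I would treat the converse $\cW^\top\in\sM(A^t,C^t,\cV^\top)\implies\cW\in\sL(C,A,\cV)$, again with $\cW$ assumed closed. Set $\cX:=\cW^\top$, which by hypothesis is $(A^t,C^t)$-controlled invariant and satisfies $\cX\subset\cV^\top$. By the first assertion of Lemma~\ref{conditionedcontrolled} (applied with the pair $(A^t,C^t)$ in the roles of $(A,B)$), $\cX^\bot$ is $((C^t)^t,(A^t)^t)=(C,A)$-conditioned invariant. Now the key point: since $\cW$ is closed, Theorem~\ref{SepTheo} gives $(\cW^\top)^\bot=\cW$, i.e. $\cX^\bot=\cW$, so $\cW$ itself is $(C,A)$-conditioned invariant. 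It remains to show $\cV\subset\cW$. From $\cX=\cW^\top\subset\cV^\top$ we get, applying $\bot$ and reversing inclusions, $(\cV^\top)^\bot\subset(\cW^\top)^\bot=\cW$; and since $\cV\subset(\cV^\top)^\bot$ always (the canonical map into the double orthogonal), we conclude $\cV\subset\cW$. Hence $\cW\in\sL(C,A,\cV)$.

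The one subtlety to state carefully is why $\cV\subset(\cV^\top)^\bot$ holds even though $\cV$ need not be closed: this is the trivial half of the Galois connection, namely that every $(x,y)\in\cV$ satisfies $x^tz=y^tz$ for all $z\in\cV^\top$ by the very definition of $\cV^\top$, so $(x,y)\in(\cV^\top)^\bot$. Similarly, I should note that $\cX^\bot$ is automatically a closed congruence (orthogonals are always closed, as remarked before Theorem~\ref{SepTheo}), which is consistent with identifying it with the closed $\cW$. I expect the main obstacle to be purely bookkeeping: keeping straight which objects must be closed for Theorem~\ref{SepTheo} to apply (only $\cW$, via the hypothesis in the statement), and making sure the transpose pairs are matched correctly when invoking Lemma~\ref{conditionedcontrolled} in each direction — there is no analytic difficulty here, since all the closedness and separation work has been front-loaded into the cited results.
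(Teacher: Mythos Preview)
Your proof is correct and follows essentially the same approach as the paper's: both directions invoke Lemma~\ref{conditionedcontrolled} for the invariance statements, and the converse uses the identity $\cW=(\cW^\top)^\bot$ from Theorem~\ref{SepTheo} together with $\cV\subset(\cV^\top)^\bot$ to recover $\cV\subset\cW$. Your write-up is in fact slightly more explicit than the paper's in justifying the trivial inclusion $\cV\subset(\cV^\top)^\bot$ and in tracking which closedness hypotheses are actually used.
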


\begin{proof}
If the congruence $\cW$ is $(C,A)$-conditioned invariant and closed, 
then, by Lemma~\ref{conditionedcontrolled}, 
$\cW^\top$ is $(A^t,C^t)$-controlled invariant. 
Moreover, if $\cW\supset \cV$, it follows that $\cW^\top\subset \cV^\top$. 
This shows the ``only if'' part of the theorem.

Conversely, if $\cX:=\cW^\top$ is $(A^t,C^t)$-controlled invariant, 
then, by Lemma~\ref{conditionedcontrolled}, 
$\cX^\bot$ is $(C,A)$-conditioned invariant.
Moreover, if $\cW$ is closed and $\cX\subset \cV^\top$, then
$\cW=(\cW^\top)^\bot=\cX^\bot\supset (\cV^\top)^\bot\supset \cV$,
which shows the ``if'' part of the theorem.
\end{proof}

As a consequence, we have. 

\begin{proposition}\label{controcondi}
Let $\cV\subset (\rmax^ n)^2$ be a congruence. If we define 
$\cK=\cV^\top$, then ${\cK^*(A^t,C^t)}^\bot$ is the minimal 
closed $(C,A)$-conditioned invariant congruence containing $\cV$. 
Therefore, $\cV_*(C,A)\subset {\cK^*(A^t,C^t)}^\bot$ and 
$\cV_*(C,A)= {\cK^*(A^t,C^t)}^\bot$ if $\cV_*(C,A)$ is closed.
\end{proposition}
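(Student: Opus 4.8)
The plan is to combine the duality theorem (Theorem~\ref{TheoCondContr}) with Proposition~\ref{Kclosed} applied to the transposed system. First I would note that since the orthogonal of any set is always closed, $\cK=\cV^\top$ is a closed semimodule, so Proposition~\ref{Kclosed} applies and tells us that $\cK^*(A^t,C^t)=\cK^\infty$ exists and is the maximal $(A^t,C^t)$-controlled invariant semimodule contained in $\cK=\cV^\top$; moreover, as remarked after Proposition~\ref{Kclosed}, $\cK^*(A^t,C^t)$ is itself closed (being an intersection of the closed semimodules $\cX_k$). Consequently $\cW:={\cK^*(A^t,C^t)}^\bot$ is a closed congruence, and by Theorem~\ref{SepTheo} applied to the closed semimodule $\cK^*(A^t,C^t)$ we recover $\cW^\top=\cK^*(A^t,C^t)$.

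Next I would show that $\cW\in\sL(C,A,\cV)$, i.e.\ that $\cW$ is $(C,A)$-conditioned invariant and contains $\cV$. Since $\cW$ is closed and $\cW^\top=\cK^*(A^t,C^t)\in\sM(A^t,C^t,\cV^\top)$, the ``if'' direction of Theorem~\ref{TheoCondContr} gives directly that $\cW\in\sL(C,A,\cV)$. So $\cW$ is indeed a closed $(C,A)$-conditioned invariant congruence containing $\cV$.

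Then I would prove minimality among \emph{closed} elements of $\sL(C,A,\cV)$. Let $\cW'$ be any closed congruence in $\sL(C,A,\cV)$. By the ``only if'' direction of Theorem~\ref{TheoCondContr}, ${\cW'}^\top\in\sM(A^t,C^t,\cV^\top)$, hence ${\cW'}^\top\subset\cK^*(A^t,C^t)=\cW^\top$ by maximality of $\cK^*(A^t,C^t)$. Taking orthogonals reverses the inclusion, so $(\cW^\top)^\bot\subset({\cW'}^\top)^\bot$; and since both $\cW$ and $\cW'$ are closed congruences, Theorem~\ref{SepTheo} yields $\cW=(\cW^\top)^\bot\subset({\cW'}^\top)^\bot=\cW'$. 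Thus $\cW$ is the smallest closed congruence in $\sL(C,A,\cV)$, which is the first assertion.

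Finally, the last two assertions follow formally: $\cV_*(C,A)$ is the smallest element of $\sL(C,A,\cV)$, and in particular it is contained in every element of $\sL(C,A,\cV)$, hence $\cV_*(C,A)\subset\cW={\cK^*(A^t,C^t)}^\bot$. If in addition $\cV_*(C,A)$ happens to be closed, then it is a closed element of $\sL(C,A,\cV)$, so by the minimality established above $\cW\subset\cV_*(C,A)$, giving equality $\cV_*(C,A)={\cK^*(A^t,C^t)}^\bot$. The only mildly delicate point is making sure at the outset that $\cK^*(A^t,C^t)$ is closed so that $\cW$ is a genuine closed congruence and Theorem~\ref{SepTheo} can be invoked in both directions; this is exactly what the remark following Proposition~\ref{Kclosed} provides, since $\cK=\cV^\top$ is closed.
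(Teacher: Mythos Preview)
Your proof is correct and follows essentially the same route as the paper. The only cosmetic difference is that the paper invokes Lemma~\ref{conditionedcontrolled} directly to show ${\cK^*(A^t,C^t)}^\bot$ is $(C,A)$-conditioned invariant and checks $\cV\subset{\cK^*(A^t,C^t)}^\bot$ by the chain $\cV\subset(\cV^\top)^\bot=\cK^\bot\subset{\cK^*(A^t,C^t)}^\bot$, whereas you package both facts through the ``if'' direction of Theorem~\ref{TheoCondContr} after first establishing $\cW^\top=\cK^*(A^t,C^t)$ via Theorem~\ref{SepTheo}; the minimality argument and the final two assertions are handled identically.
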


\begin{proof}
By Lemma~\ref{conditionedcontrolled} we know that 
${\cK^*(A^t,C^t)}^\bot$ is $(C,A)$-conditioned invariant. 
Moreover, since $\cK^*(A^t,C^t)\subset \cK$, we have  
$\cV\subset (\cV^\top)^\bot =\cK^\bot \subset {\cK^*(A^t,C^t)}^\bot$. 
 
Let $\cW$ be a closed $(C,A)$-conditioned 
invariant congruence containing $\cV$. 
Then, by Theorem~\ref{TheoCondContr}, we have 
$\cW^\top\in \sM(A^t,C^t,\cK)$ and thus 
$\cW^\top \subset \cK^*(A^t,C^t)$. Therefore, 
${\cK^*(A^t,C^t)}^\bot \subset (\cW^\top)^\bot=\cW$.  
\end{proof}

Since in the previous proposition $\cK=\cV^\top$ is closed, 
we can apply Proposition~\ref{Kclosed} in order to compute $\cK^*(A^t,C^t)$.  
This means that in~\eqref{DefPhiX} and~\eqref{DefSequenceX} 
we have to take $\cK=\cV^\top$, $B=C^t$ and $A^t$ instead of $A$. 

\begin{example}
Consider again the matrices $A$ and $C$ and the congruence $\cV$ 
of Example~\ref{ExampleSeqW}. We have seen that in this case 
$\cV_*(C,A)$ is not closed. Taking $\cK=\cV^\top$, 
by Proposition~\ref{controcondi}, we know that 
${\cK^*(A^t,C^t)}^\bot$ is the minimal closed 
$(C,A)$-conditioned invariant congruence containing $\cV$.
Note that $\cV=\ker E$, where
\[
 E =
\begin{pmatrix}
\unit & \zero \cr
\unit & \unit \cr 
\end{pmatrix} \; ,
\]
so that $\cK=\im E^t$ is the semimodule considered 
in Example~\ref{ExampleSeqX}. Since $A=A^t$ and the matrix $B$ 
of Example~\ref{ExampleSeqX} is equal to $C^t$, 
$\cK^*(A^t,C^t)$ is the semimodule 
$\cK^*(A,B)=\set{x\in \rmax^2}{x_2=\zero}$ computed 
in Example~\ref{ExampleSeqX}. Therefore, we conclude that 
the minimal closed $(C,A)$-conditioned invariant congruence 
containing $\cV$ is 
${\cK^*(A^t,C^t)}^\bot = \set{(x,y)\in (\rmax^2)^2}{x_1=y_1}$.
\end{example}

We say that a congruence $\cW$ is cofinitely generated if 
$\cW=\ker E$ for some matrix $E$. Since a congruence $\cW$ on $\rmax^n$ is 
in particular a subsemimodule of $(\rmax^n)^2$, we say that 
$\cW$ is finitely generated if it is finitely generated as a semimodule, 
that is, if there exists a finite family 
$\left\{(x_i,y_i)\right\}_{i\in I}\subset (\rmax^n)^2$ 
such that $\cW$ is the set of elements of the form 
$\bigoplus_{i\in I}\lambda_i (x_i,y_i)$, with $\lambda_i\in \rmax$. 
We next show that the class of cofinitely generated congruences coincides with 
the class of finitely generated congruences. With this aim, 
we shall need the following lemma, which tells us that the 
solution sets of homogeneous max-plus linear systems of 
equations are finitely generated semimodules. 

\begin{lemma}[\cite{butkovicH,gaubert92a}]
If $F$ and $G$ are two rectangular matrices of the same dimension, 
then, $\set{z}{Fz=Gz}$ is a finitely generated semimodule.
\end{lemma}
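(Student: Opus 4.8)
The plan is to prove finite generation of $\cS:=\set{z}{Fz=Gz}$ by induction on the number $p$ of rows of $F,G\in\rmax^{p\times n}$, after first recording that $\cS$ is a semimodule: if $Fz=Gz$ and $Fw=Gw$, then for all $\lambda,\mu\in\rmax$ one has $F(\lambda z\oplus\mu w)=\lambda Fz\oplus\mu Fw=\lambda Gz\oplus\mu Gw=G(\lambda z\oplus\mu w)$, so $\cS$ is stable under max-plus linear combinations. For the base case $p=0$ the solution set is all of $\rmax^n=\im I$, which is finitely generated.

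For the inductive step I would add one equation at a time. Suppose the solution set of the first $r$ equations equals $\im M_r$ for some matrix $M_r$ with $m$ columns, so that $z$ satisfies them exactly when $z=M_r\lambda$ for some $\lambda\in\rmax^{m}$. Adjoining the $(r+1)$-th equation $f^tz=g^tz$, where $f^t,g^t$ are the corresponding rows, and substituting $z=M_r\lambda$, the new constraint becomes $(M_r^t f)^t\lambda=(M_r^t g)^t\lambda$, i.e.\ a single two-sided scalar equation $\alpha^t\lambda=\beta^t\lambda$ in the variable $\lambda$, with $\alpha:=M_r^tf$ and $\beta:=M_r^tg$. If the solution set $\Lambda:=\set{\lambda}{\alpha^t\lambda=\beta^t\lambda}$ is finitely generated, say $\Lambda=\smallvect\{h_1,\dots,h_s\}$, then the solution set of the first $r+1$ equations is the image $M_r\Lambda=\smallvect\{M_r h_1,\dots,M_r h_s\}$, again finitely generated. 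Iterating through all $p$ rows yields the result.

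Everything thus reduces to the single two-sided equation $\max_i(\alpha_i+\lambda_i)=\max_i(\beta_i+\lambda_i)$, which is the crux. I would exhibit a finite generating family of $\Lambda$ consisting of solutions supported on at most two coordinates: for each index $i$ with $\alpha_i=\beta_i$, the vector with $\unit$ in coordinate $i$ and $\zero$ elsewhere (which trivially solves the equation); and for each ordered pair $(i,j)$ with $\beta_i\le\alpha_i$, $\alpha_j\le\beta_j$ and $\alpha_i,\beta_j$ finite, the vector carrying $-\alpha_i$ in coordinate $i$ and $-\beta_j$ in coordinate $j$ (and $\zero$ elsewhere), which balances both sides at the common value $\unit$. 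To prove these generate $\Lambda$, I would take an arbitrary solution $\lambda$, set $M:=\max_i(\alpha_i+\lambda_i)=\max_i(\beta_i+\lambda_i)$, fix indices $i_0,j_0$ attaining $M$ on the $\alpha$- and $\beta$-sides respectively, and reconstruct $\lambda$ coordinate by coordinate: a coordinate $k$ with $\alpha_k=\beta_k$ is supplied by the unit generator at $k$ scaled by $\lambda_k$, a coordinate with $\alpha_k>\beta_k$ by the pair $(k,j_0)$, and one with $\alpha_k<\beta_k$ by the pair $(i_0,k)$, each scaled to reproduce $\lambda_k$ in coordinate $k$. One then checks that the spill-over of each scaled generator into its second coordinate stays dominated by the corresponding entry of $\lambda$, so the max-plus sum of all contributions equals $\lambda$ exactly.

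The main obstacle is precisely this single-equation generation statement, and within it the bookkeeping around $\zero$ entries: one must treat separately the degenerate value $M=\zero$, where the support of $\lambda$ is confined to coordinates with $\alpha_i=\beta_i=\zero$, and verify that the pivots $i_0,j_0$ satisfy the required inequalities $\beta_{i_0}\le\alpha_{i_0}$ and $\alpha_{j_0}\le\beta_{j_0}$ (which hold because they attain the maximum on their respective sides, forcing the opposite coefficient to be no larger). The reductions in the inductive step are routine once the image and substitution identities $f^tM_r=(M_r^tf)^t$ are set up, so the entire difficulty is concentrated in establishing the finiteness and correctness of the two-support generating family for one equation.
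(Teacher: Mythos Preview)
The paper does not prove this lemma at all; it simply quotes it from the cited references \cite{butkovicH,gaubert92a} and uses it as a black box. So there is no ``paper's own proof'' to compare against. Your argument is essentially the elimination method of Butkovi\v{c}--Heged\"us: reduce by induction to a single two-sided scalar equation, and for that equation exhibit a finite generating set consisting of solutions supported on one or two coordinates. The reconstruction you describe is correct; the pivot inequalities $\beta_{i_0}\le\alpha_{i_0}$ and $\alpha_{j_0}\le\beta_{j_0}$ follow exactly as you say from the maximality of $i_0$ and $j_0$, the spill-over bounds follow from $\alpha_k+\lambda_k\le M=\beta_{j_0}+\lambda_{j_0}$ and its symmetric counterpart, and the degenerate case $M=\zero$ is handled as you indicate. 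One cosmetic point: in the inductive step you write $f^tM_r=(M_r^tf)^t$, which of course relies on $(AB)^t=B^tA^t$ holding in $\rmax$; it does, since the semiring is commutative, so the substitution step is sound.
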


\begin{lemma}\label{th-C}
A congruence $\cW$ is cofinitely generated if, and only
if, it is finitely generated as a semimodule.
\end{lemma}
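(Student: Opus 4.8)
The $(\Leftarrow)$ direction is immediate: if $\cW=\ker E$ for some matrix $E$, then $\cW$ is a congruence by construction, and it is finitely generated as a semimodule. Indeed, writing the equation $Ex=Ey$ as $Fz=Gz$ where $z=(x,y)\in(\rmax^n)^2$, $F=(E\ \zero)$ and $G=(\zero\ E)$ are matrices of the same dimension, the previous lemma (Butkovi\v{c}--Heged\"us; Gaubert) tells us that $\set{z}{Fz=Gz}$ is a finitely generated semimodule. Hence $\cW=\ker E$ is finitely generated.

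For the $(\Rightarrow)$ direction, suppose $\cW$ is finitely generated as a semimodule, say $\cW=\im D$ for some matrix $D\in\rmax^{2n\times m}$. First I would observe that, since $\cW$ is finitely generated, it is closed by Lemma~\ref{FGClosed}, so Theorem~\ref{SepTheo} applies and gives $\cW=(\cW^\top)^\bot$. Now $\cW^\top\subset\rmax^n$ is the orthogonal of a semimodule; splitting the generators of $\cW$ as $D=\binom{D_1}{D_2}$ with $D_1,D_2\in\rmax^{n\times m}$, one has by definition $z\in\cW^\top$ iff $x^tz=y^tz$ for all $(x,y)\in\cW$, equivalently $D_1^tz=D_2^tz$. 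By the same lemma on solution sets of homogeneous systems, $\cW^\top=\set{z}{D_1^tz=D_2^tz}$ is a finitely generated semimodule, so $\cW^\top=\im G$ for some matrix $G$. Then $\cW=(\cW^\top)^\bot=(\im G)^\bot=\ker G^t$ by Lemma~\ref{LemmaFG}, which exhibits $\cW$ as cofinitely generated with $E=G^t$.

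The argument is essentially a bookkeeping exercise chaining together three previously established facts: finitely generated semimodules are closed (Lemma~\ref{FGClosed}), the biorthogonality $\cW=(\cW^\top)^\bot$ for closed congruences (Theorem~\ref{SepTheo}), and the identities $(\im E)^\bot=\ker E^t$, $(\ker E)^\top=\im E^t$ (Lemma~\ref{LemmaFG}); the lemma on homogeneous systems is what makes both $\cW$ (in the easy direction) and $\cW^\top$ (in the hard direction) visibly finitely generated. The only point requiring care is the identification $\cW^\top=\set{z}{D_1^tz=D_2^tz}$ when $\cW=\im D$: one must note that the condition ``$x^tz=y^tz$ for all $(x,y)\in\cW$'' need only be checked on a generating family of $\cW$, since $x\mapsto x^tz$ is max-plus linear, and then read off that this is exactly the homogeneous system $D_1^tz=D_2^tz$. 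I expect no serious obstacle; the main subtlety is simply making sure the closedness hypothesis needed to invoke Theorem~\ref{SepTheo} is supplied, which is precisely where Lemma~\ref{FGClosed} enters.
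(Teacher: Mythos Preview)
Your argument is correct and follows essentially the same route as the paper's proof: both directions invoke the Butkovi\v{c}--Heged\"us lemma on solution sets of homogeneous systems, and the harder direction uses closedness (Lemma~\ref{FGClosed}), biorthogonality (Theorem~\ref{SepTheo}), and the identity $(\im G)^\bot=\ker G^t$ (Lemma~\ref{LemmaFG}). Your treatment is in fact slightly more explicit than the paper's in spelling out why $\cW^\top$ is the solution set $\{z:D_1^tz=D_2^tz\}$. One cosmetic slip: you have the direction labels swapped --- the case ``$\cW=\ker E$'' is the hypothesis of the $(\Rightarrow)$ implication (cofinitely generated $\Rightarrow$ finitely generated), not $(\Leftarrow)$.
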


\begin{proof}
If $\cW=\ker E$, then, it is finitely generated as a semimodule,
because $\cW=\set{(x,y)}{Ex=Ey}$
is the solution set of an homogeneous max-plus linear system of equations,
which is finitely generated as a semimodule by the previous lemma.

Conversely, if $\cW$ is finitely generated as a semimodule,
then it is closed, and so $\cW=(\cW^\top)^{\bot}$ by Theorem~\ref{SepTheo}. 
Using again the previous lemma, 
we deduce that $\cW^\top$ is a finitely generated semimodule, 
so $\cW^\top =\im G$ for some matrix $G$.
It follows that $\cW=(\im G)^\bot=\ker G^t$ is cofinitely generated.
\end{proof}

In practice, the objects of interest are usually finitely generated 
congruences and semimodules. This is why, in the sequel, 
we focus our attention to them and consider the following sets  
\[
\sLg(C,A,\cV)=
\set{\cW\in \sL(C,A,\cV)}{\cW=\ker D \; \text{\rm for some matrix } D}
\]
and 
\[
\sMg(A,B,\cK)=
\set{\cX\in \sM(A,B,\cK)}{\cX =\im D \; \text{\rm for some matrix } D} \; . 
\]
The following theorem relates $\sLg(C,A,\cV)$ with $\sMg(A^t,C^t,\cV^\top)$. 

\begin{theorem}\label{FGDuality}
Let $\cV\subset (\rmax^ n)^2$ be a congruence. If we define 
$\cK=\cV^\top$, then $\sLg(C,A,\cV)$ 
admits a minimal element $\cVg(C,A)$ if, and only if, 
$\sMg(A^t,C^t,\cK)$ admits a maximal element 
$\cKg(A^t,C^t)$. Moreover, when these elements exist, they 
satisfy $\cVg(C,A)= \cKg(A^t,C^t)^\bot$.
\end{theorem}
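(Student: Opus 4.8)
The plan is to leverage the already-established duality machinery, in particular Theorem~\ref{TheoCondContr} and Lemma~\ref{th-C}, together with the order-reversing bijection between closed congruences and closed semimodules furnished by $\bot$ and $\top$ (Theorem~\ref{SepTheo}). The key observation is that, by Lemma~\ref{th-C}, a congruence is cofinitely generated (i.e.\ of the form $\ker D$) if and only if it is finitely generated as a semimodule, hence in particular closed; and finitely generated semimodules are closed by Lemma~\ref{FGClosed}. So both $\sLg(C,A,\cV)$ and $\sMg(A^t,C^t,\cK)$ consist entirely of closed objects, and Theorem~\ref{TheoCondContr} restricts to a bijection between them. The remaining point is to check that this bijection is order-reversing in the appropriate sense and that it sends finitely generated objects to finitely generated objects, so that minima are carried to maxima.

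First I would record that $\cW\mapsto \cW^\top$ maps $\sLg(C,A,\cV)$ into $\sMg(A^t,C^t,\cK)$: if $\cW=\ker D\in\sLg(C,A,\cV)$, then $\cW$ is closed, so by Theorem~\ref{TheoCondContr} $\cW^\top\in\sM(A^t,C^t,\cV^\top)=\sM(A^t,C^t,\cK)$, and by Lemma~\ref{th-C} $\cW$ is finitely generated as a semimodule, hence by the lemma on solution sets of homogeneous systems (used in the proof of Lemma~\ref{th-C}) $\cW^\top$ is finitely generated, i.e.\ $\cW^\top=\im G$ for some $G$; thus $\cW^\top\in\sMg(A^t,C^t,\cK)$. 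Conversely, if $\cX=\im G\in\sMg(A^t,C^t,\cK)$, then $\cX$ is closed by Lemma~\ref{FGClosed}, so $\cX^\bot$ is a closed congruence with $\cX^\bot\in\sL(C,A,\cV)$ by Theorem~\ref{TheoCondContr}, and $\cX^\bot=(\im G)^\bot=\ker G^t$ by Lemma~\ref{LemmaFG}, so $\cX^\bot\in\sLg(C,A,\cV)$. Moreover $(\cW^\top)^\bot=\cW$ and $(\cX^\bot)^\top=\cX$ by Theorem~\ref{SepTheo} since everything in sight is closed, so $\bot$ and $\top$ are mutually inverse bijections between $\sLg(C,A,\cV)$ and $\sMg(A^t,C^t,\cK)$. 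They are inclusion-reversing: $\cW_1\subset\cW_2$ implies $\cW_2^\top\subset\cW_1^\top$, directly from the definition of $\top$.

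With this bijection in hand, the conclusion is immediate: if $\cVg(C,A)$ is the minimal element of $\sLg(C,A,\cV)$, then for every $\cX\in\sMg(A^t,C^t,\cK)$ we have $\cX^\bot\in\sLg(C,A,\cV)$, so $\cVg(C,A)\subset\cX^\bot$, hence $\cX=(\cX^\bot)^\top\subset\cVg(C,A)^\top$; since $\cVg(C,A)^\top$ itself lies in $\sMg(A^t,C^t,\cK)$, it is the maximal element, and $\cKg(A^t,C^t)=\cVg(C,A)^\top$, equivalently $\cVg(C,A)=\cKg(A^t,C^t)^\bot$. The reverse implication is symmetric, applying $\bot$ to the maximal element of $\sMg(A^t,C^t,\cK)$.

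I do not expect a serious obstacle here, as the statement is essentially a formal consequence of results already proved; the only points requiring a moment's care are (a) confirming that membership in $\sLg$ genuinely forces closedness and finite generation of both the congruence and its orthogonal — this is exactly the content of Lemma~\ref{th-C} and the homogeneous-solution-set lemma — and (b) being careful about the direction of the order reversal so that ``minimal'' on the congruence side matches ``maximal'' on the semimodule side, which is where the inclusion $\cV\subset(\cV^\top)^\bot$ and the definition $\cK=\cV^\top$ come in to guarantee the constraints are dual to one another.
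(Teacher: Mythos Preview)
Your proposal is correct and follows essentially the same approach as the paper: establishing an order-reversing bijection between $\sLg(C,A,\cV)$ and $\sMg(A^t,C^t,\cK)$ via $\top$ and $\bot$, then reading off the correspondence of extrema. The paper's version is marginally more direct in that it uses Lemma~\ref{LemmaFG} to write the bijection explicitly as $\ker D \leftrightarrow \im D^t$ (so that finite generation of $\cW^\top$ and the order reversal $\ker D_1\subset\ker D_2\iff \im D_2^t\subset\im D_1^t$ come for free), whereas you obtain finite generation of $\cW^\top$ via the detour through Lemma~\ref{th-C} and the homogeneous-solution-set lemma; but this is a stylistic difference, not a substantive one.
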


\begin{proof}
In the first place, note that 
\[
\ker D_1\subset \ker D_2 \iff \im D_2^t \subset \im D_1^t\; , 
\] 
since 
$\ker D_1\subset \ker D_2 \implies (\ker D_2)^\top\subset (\ker D_1)^\top 
\implies \im D_2^t \subset \im D_1^t$ and 
$\im D_2^t \subset \im D_1^t \implies 
(\im D_1^t)^\bot \subset (\im D_2^t)^\bot \implies \ker D_1\subset \ker D_2$ 
by Lemma~\ref{LemmaFG}. In addition, 
by Theorem~\ref{TheoCondContr} we know that 
$\cW=\ker D\in \sLg(C,A,\cV)$ if, and only if, 
$\cX=\im D^t\in \sMg(A^t,C^t,\cK)$. 
Therefore, $\sLg(C,A,\cV)$ admits a minimal element 
$\cVg(C,A)$ if, and only if, $\sMg(A^t,C^t,\cK)$ admits a maximal 
element $\cKg(A^t,C^t)$. Moreover, if $\cVg(C,A)$ exists and 
$\cVg(C,A)=\ker D$, then $\cKg(A^t,C^t)=\im D^t$ and thus 
$\cVg(C,A)=\cKg(A^t,C^t)^\bot$.
\end{proof}
   
We next give a condition which ensures the existence of $\cVg(C,A)$. 
With this aim, it is convenient to restrict ourselves to the 
subsemiring $\zmax=(\Zm,\max,+)$ of $\rmax$ 
and introduce the notion of volume of a subsemimodule of $\zmax^n$.

\begin{definition}\label{defvolumen}
Let $\cK\subset \zmax^n$ be a semimodule. We call {\em volume} of $\cK$, 
and we represent it with $\vol(\cK)$, the cardinality of the set 
$\set{z\in \cK}{z_1\oplus \cdots \oplus z_n=0}$. Moreover, 
if $E\in \zmax^{n\times p}$, we represent with $\vol(E)$ 
the volume of the semimodule $\cK=\im E$, that is, 
$\vol(E)=\vol(\im E)$. 
\end{definition}

Note that a semimodule $\cK\subset \zmax^n$ with finite volume is 
necessarily finitely generated because clearly  
$\cK = \smallvect \set{z\in \cK}{z_1\oplus \cdots \oplus z_n=0}$. 
We shall need the following properties. 

\begin{lemma}[\cite{katz07}]\label{lemapropvol}
Let $E\in \zmax^{n\times p}$ be a matrix 
and $\cZ,\cY\subset  \zmax^n$ be semimodules. Then, we have
\begin{enumerate}
\item[(i)]\label{pv1} 
$\cY \subset \cZ \Rightarrow \vol(\cY) \leq \vol(\cZ) \;$,
\item[(ii)]\label{pv2} if $\vol(\cY)<\infty$, 
then $\cY \varsubsetneq  \cZ \Rightarrow \vol(\cY) <\vol(\cZ) \;$,
\item[(iii)]\label{pv3} $\vol(E) =\vol(E^t)\; $.
\end{enumerate}
\end{lemma}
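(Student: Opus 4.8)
The plan is to dispatch (i) and (ii) by elementary counting and to treat (iii) --- the tropical counterpart of ``row rank equals column rank'' --- as the real content. Throughout, write $N(\cZ):=\set{z\in \cZ}{z_1\oplus\cdots\oplus z_n=0}$, so that $\vol(\cZ)=\card N(\cZ)$ and, as observed after Definition~\ref{defvolumen}, $\cZ=\smallvect N(\cZ)$ for every subsemimodule $\cZ$ of $\zmax^n$ (any nonzero $w\in\cZ$ equals $\sigma u$ with $\sigma:=w_1\oplus\cdots\oplus w_n$ and $u:=(-\sigma)w\in N(\cZ)$). Statement (i) is then immediate: $\cY\subset\cZ$ forces $N(\cY)\subset N(\cZ)$, hence $\vol(\cY)\le\vol(\cZ)$ in $\N\cup\{\infty\}$. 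For (ii), assume $\vol(\cY)<\infty$ and $\cY\varsubsetneq\cZ$, and suppose for contradiction that $\vol(\cY)=\vol(\cZ)$; by (i) this common value is finite, so $N(\cY)$ and $N(\cZ)$ are finite sets of equal cardinality with $N(\cY)\subset N(\cZ)$, whence $N(\cY)=N(\cZ)$ and therefore $\cY=\smallvect N(\cY)=\smallvect N(\cZ)=\cZ$, contradicting $\cY\varsubsetneq\cZ$. Thus $\vol(\cY)<\vol(\cZ)$.

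The core is (iii). First I would reduce to the case where $E$ has no all-$\zero$ row or column: deleting an all-$\zero$ column leaves $\im E$ unchanged and removes from $\im E^t$ only a coordinate that is identically $\zero$ there, so that neither $\vol(E)$ nor $\vol(E^t)$ is affected, and symmetrically for rows. The substance is then to exhibit a bijection $N(\im E)\to N(\im E^t)$. The natural device is residuation: with $E\in\zmax^{n\times p}$ associate the map $E^\sharp\colon\zmax^n\to\zmax^p$, $(E^\sharp y)_j:=\min_i(y_i-E_{ij})$ (well defined since no column is all-$\zero$), the largest $\lambda$ with $E\lambda\le y$. Then $EE^\sharp E=E$, so $P:=EE^\sharp$ is a projector with $\im P=\im E$ (which is closed by Lemma~\ref{FGClosed}), and dually $E^t(E^t)^\sharp$ projects onto $\im E^t$. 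For $z\in\im E$, writing $\lambda:=E^\sharp z$ one has $E\lambda=z$, and the incidence relation $\set{(i,j)}{z_i=E_{ij}+\lambda_j}$ covers every row (because $E\lambda=z$) and every column (because $\lambda$ is the largest vector with $E\lambda\le z$); transposing this relation and reading it off the rows of $E$ should produce the companion element of $\im E^t$, with the scaling chosen so that the companion is normalized exactly when $z$ is.

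Turning that last sentence into a proof --- well-definedness, the fact that $N(\im E)$ is mapped into $N(\im E^t)$, and bijectivity --- is the step I expect to be the main obstacle: it is in essence the Develin--Sturmfels duality between the row and column spaces of a tropical matrix, here restricted to their integer points, and it is exactly here that working over $\zmax$ (so that normalization stays inside the semimodule and the saturation data is combinatorial) is used. Granting the bijection, $\vol(E)=\card N(\im E)=\card N(\im E^t)=\vol(E^t)$, which is (iii). In the write-up one may of course shortcut this, as the paper does, by citing \cite{katz07}, where (iii) is established by precisely this kind of residuation/incidence argument.
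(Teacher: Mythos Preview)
The paper provides no proof of this lemma; it is simply quoted from~\cite{katz07}. So there is nothing to compare your argument against beyond that citation.

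Your treatment of (i) and (ii) is correct and is the obvious argument: both follow at once from $\cZ=\smallvect N(\cZ)$ and the trivial monotonicity $N(\cY)\subset N(\cZ)$ when $\cY\subset\cZ$.

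For (iii) you outline the right mechanism --- residuation, the projector $EE^\sharp$, and the row/column saturation (incidence) data attached to a normalized point of $\im E$ --- and you are candid that the bijection $N(\im E)\to N(\im E^t)$ is left as a sketch. That is an honest assessment: what you have written is a plan, not a proof, and the missing piece (well-definedness and bijectivity of the companion map, together with the check that normalization is preserved) is exactly the content of the argument in~\cite{katz07}. Since the present paper does not reproduce that argument either, your proposal is at the same level of completeness as the paper itself once you invoke the citation, and more informative in that you explain what the cited proof actually does. If you intend this as a self-contained replacement for the citation, you would still need to write out the bijection explicitly and verify it; if you intend it as an annotated citation, it is fine as it stands.
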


\begin{proposition}\label{PropFiniteVolume}
Let $A\in \zmax^{n\times n}$, $C\in \zmax^{q\times n}$ 
and $\cV=\ker E$, where $E\in \zmax^{p\times n}$ is a matrix 
with finite volume. Then, $\cVg(C,A)$ exists. Moreover,  
if we define the sequence $\left\{\cX_k\right\}_{k\in \N}$ 
by~\eqref{DefSequenceX}, with $\cK=\cV^\top$, $B=C^t$ and 
$A^t$ instead of $A$ in~\eqref{DefPhiX}, then 
$\cVg(C,A)={\cX_r}^\bot$ for some $r\leq \vol (E)+1$.
\end{proposition}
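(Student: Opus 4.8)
The plan is to derive the finite-volume bound by tracking the volume of the semimodules $\cX_k$ produced by the dual fixed point algorithm and invoking the strict-monotonicity property of the volume. First I would set $\cK=\cV^\top=(\ker E)^\top=\im E^t$, which is legitimate by Lemma~\ref{LemmaFG}; since $E\in\zmax^{p\times n}$ has finite volume, property~(iii) of Lemma~\ref{lemapropvol} gives $\vol(\cK)=\vol(\im E^t)=\vol(E^t)=\vol(E)<\infty$. The semimodules $\cX_k$ defined by~\eqref{DefSequenceX} (with $\cK=\cV^\top$, $B=C^t$, and $A^t$ in place of $A$) form a weakly decreasing chain $\cX_1=\cK\supset\cX_2\supset\cdots$, all contained in $\cK$, so by Lemma~\ref{lemapropvol}(i) each has finite volume, with $\vol(\cX_k)\le\vol(\cK)=\vol(E)$.

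Next I would argue that this chain stabilizes in at most $\vol(E)+1$ steps. Indeed, as long as $\cX_{k+1}\varsubsetneq\cX_k$, Lemma~\ref{lemapropvol}(ii) (applicable because $\vol(\cX_{k+1})<\infty$) forces $\vol(\cX_{k+1})<\vol(\cX_k)$; since these volumes are nonnegative integers bounded above by $\vol(E)$ and $\vol(\cX_1)=\vol(E)$, the strict decrease can occur at most $\vol(E)$ times. Hence there is some $r\le\vol(E)+1$ with $\cX_{r+1}=\cX_r$, and then the chain is constant from $r$ on, so $\cK^\infty=\cap_k\cX_k=\cX_r$. Because $\cK=\cV^\top$ is closed (being an orthogonal), Proposition~\ref{Kclosed} applies and yields $\cK^*(A^t,C^t)=\cK^\infty=\cX_r$; in particular this maximal controlled invariant semimodule is finitely generated, since as noted a subsemimodule of $\zmax^n$ with finite volume is finitely generated. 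Thus $\cX_r=\cK^{\rm fg}(A^t,C^t)$ is the maximal element of $\sMg(A^t,C^t,\cK)$, so by Theorem~\ref{FGDuality} the minimal element $\cVg(C,A)$ of $\sLg(C,A,\cV)$ exists and equals $\cK^{\rm fg}(A^t,C^t)^\bot={\cX_r}^\bot$.

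The only genuinely delicate point is confirming that $\cX_r$ is really the maximal element of $\sMg(A^t,C^t,\cK)$ and not merely of $\sM(A^t,C^t,\cK)$: Theorem~\ref{FGDuality} is phrased as an equivalence about existence of extremal elements in the finitely generated classes, so I must check that $\cX_r$ both lies in $\sMg$ (it is finitely generated, by the finite-volume remark) and dominates every member of $\sMg(A^t,C^t,\cK)$ (it dominates every member of $\sM(A^t,C^t,\cK)$ by Proposition~\ref{Kclosed}, hence a fortiori every member of the subclass $\sMg$). Once both are verified, $\cX_r=\cK^{\rm fg}(A^t,C^t)$ and the conclusion follows. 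A minor bookkeeping subtlety is the indexing: if the chain already stabilizes with $\cX_1$ (i.e. $\cK$ is itself $(A^t,C^t)$-controlled invariant) the bound reads $r=1\le\vol(E)+1$, which is consistent; and in the worst case each of the first $\vol(E)$ steps strictly decreases the volume from $\vol(E)$ down to $0$, after which $\cX_{\vol(E)+1}$ must be stationary, giving $r\le\vol(E)+1$ as claimed.
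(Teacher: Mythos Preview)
Your proof is correct and follows essentially the same route as the paper's: identify $\cK=\cV^\top=\im E^t$ with $\vol(\cK)=\vol(E)$, use the strict monotonicity of volume (Lemma~\ref{lemapropvol}) to force the decreasing chain $\{\cX_k\}$ to stabilize within $\vol(E)+1$ steps, invoke Proposition~\ref{Kclosed} to conclude $\cK^*(A^t,C^t)=\cX_r$, observe that finite volume implies finite generation, and finish with Theorem~\ref{FGDuality}. Your additional care in distinguishing $\sMg$ from $\sM$ and in noting that $\cK$ is closed as an orthogonal is sound and makes explicit points the paper leaves implicit.
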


\begin{proof} Firstly, note that by Property~(iii) 
of Lemma~\ref{lemapropvol} we have
$\vol(\cX_1)=\vol(\cK)= \vol(E^t)=\vol(E)<\infty $. Then, 
since $\left\{\cX_k\right\}_{k\in \N}$ is a decreasing 
sequence of semimodules, from Property~(i) 
of Lemma~\ref{lemapropvol} we deduce that 
$\left\{\vol(\cX_k)\right\}_{k\in \N}$ 
is a decreasing sequence of non-negative integers. 
Therefore, there exists $r\leq \vol(\cX_1)+1=\vol(E)+1$ such that 
$\vol(\cX_{r+1})=\vol(\cX_r)$, so by Property~(ii) 
of Lemma~\ref{lemapropvol}, we have $\cX_{r+1}=\cX_r$. 
It follows that $\cX_k=\cX_r$ for all $k\geq r$ and thus 
$\cK^*(A^t,C^t)=\cK^\infty=\cX_r$ by Proposition~\ref{Kclosed}. 
Finally, since $\cX_r$ has finite volume, 
and so it is finitely generated, 
we conclude that $\cKg(A^t,C^t)=\cK^*(A^t,C^t)=\cX_r$ and then, 
by Theorem~\ref{FGDuality} we have $\cVg(C,A)= \cX_r^\bot$. 
\end{proof}

\begin{remark}
Note that in the previous proof we in particular showed that when $\cK$ 
has finite volume, the sequence $\left\{\cX_k\right\}_{k\in \N}$ defined 
in~\eqref{DefSequenceX} converges in at most $\vol(\cK)+1$ steps to 
$\cK^*(A,B)$, for any pair of matrices $A$ and $B$.
\end{remark}

For sufficient conditions for $\cK=\im E$ to have finite volume,  
and a bound for $\vol (E)$ in terms of the additive version of 
Hilbert's projective metric when $E$ only has finite entries, 
we refer the reader to~\cite{katz07}.  

To end this section, observe that Proposition~\ref{CondInvInterp} raises 
the question of constructing a dynamic observer for system~\eqref{e-fond},
allowing us to compute $[x(m)]_{\cW}$ as a function of 
$[x(0)]_{\cW}$ and $y(0),\ldots, y(m-1)$. To do so, 
we shall assume that the congruence $\cW$ is cofinitely generated, 
so that $\cW=\ker F$ for some matrix $F$. Then, 
we must compute $Fx(m)$ in terms of $Fx(0)$ and $y(0),\ldots,y(m-1)$.

\begin{theorem}[Dynamic observer]\label{TheoObserver}
Assume that the minimal $(C,A)$-conditioned
invariant congruence $\cW$ containing $\cV$
is cofinitely generated, so that $\cW=\ker F$ for some matrix $F$.
Then, there exist two matrices $U$ and $V$ such that
\begin{align}\label{e-transpose}
FA=UF\oplus VC \enspace ,
\end{align}
and for any choice of these matrices, 
if we define $z(k):=F x(k)$, we have
\begin{align}\label{e-do}
z(k+1)=Uz(k)\oplus Vy(k) \; \makebox{ for }\; k\geq 0 \; ,
\end{align}
where $\left\{ x(k)\right\}_{k\geq 0}$ 
is any trajectory of system~\eqref{e-fond} and 
$\left\{ y(k)\right\}_{k\geq 0}$ 
is the corresponding output trajectory.
\end{theorem}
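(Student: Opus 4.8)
The plan is to establish \eqref{e-transpose} first and then verify that \eqref{e-do} follows from it along trajectories of \eqref{e-fond}.

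\textbf{Existence of $U$ and $V$.} The key point is that $\cW=\ker F$ being $(C,A)$-conditioned invariant means precisely $A(\ker F\cap\ker C)\subset\ker F$. Writing the stacked matrix $\left(\begin{smallmatrix}F\\ C\end{smallmatrix}\right)$, whose kernel (in the congruence sense) is $\ker F\cap\ker C$, conditioned invariance says that $x\sim y$ modulo $\ker\left(\begin{smallmatrix}F\\ C\end{smallmatrix}\right)$ implies $FAx=FAy$, i.e. $\ker\left(\begin{smallmatrix}F\\ C\end{smallmatrix}\right)\subset\ker(FA)$. By Lemma~\ref{LemmaFG} and the orthogonality calculus, taking $\top$ reverses this inclusion to $\im(FA)^t\subset\im\left(\begin{smallmatrix}F\\ C\end{smallmatrix}\right)^t=\im\left(F^t\ C^t\right)=\im F^t\oplus\im C^t$. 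Hence each column of $(FA)^t=A^tF^t$ is a max-plus linear combination of columns of $F^t$ and $C^t$; collecting the coefficients into matrices gives $A^tF^t=F^tU^t\oplus C^tV^t$ for some $U,V$, and transposing yields $FA=UF\oplus VC$. (One must check the relevant congruences/semimodules are closed so that Theorem~\ref{SepTheo} applies; $\ker$ of a matrix is closed by Lemma~\ref{FGClosed} applied to its $\top$, and $\im F^t\oplus\im C^t$ is closed by Lemmas~\ref{FGClosed} and~\ref{SumClosed}.)

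\textbf{The observer recursion.} Now fix any such $U,V$ and any trajectory $\{x(k)\}$ of \eqref{e-fond}, so $x(k+1)\sim_\cV Ax(k)$ with $\cV\subset\cW=\ker F$; consequently $Fx(k+1)=FAx(k)$. Using \eqref{e-transpose},
\begin{equation*}
z(k+1)=Fx(k+1)=FAx(k)=(UF\oplus VC)x(k)=UFx(k)\oplus VCx(k)=Uz(k)\oplus Vy(k),
\end{equation*}
since $y(k)=Cx(k)$. This is exactly \eqref{e-do}, valid for all $k\geq0$.

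\textbf{Main obstacle.} The routine part is the recursion; the delicate part is the derivation of \eqref{e-transpose}, specifically the translation of the congruence inclusion $A(\ker F\cap\ker C)\subset\ker F$ into a semimodule inclusion via orthogonals and then back into an explicit matrix identity. One subtlety is that $A\cW$ need not be a congruence (as noted after the definition of $A^{-1}\cW$), but here we only use that $A(\cW\cap\cC)\subset\cW$ as sets of pairs, which immediately gives the scalar equality $FAx=FAy$ whenever $x\sim_{\ker F\cap\ker C}y$; no congruence structure of the image is needed. The other subtlety is ensuring all the relevant objects are closed so that Theorem~\ref{SepTheo} (the separation/biduality theorem) can be invoked to reverse inclusions — this is where the hypothesis that $\cW$ is cofinitely generated, hence closed, is essential.
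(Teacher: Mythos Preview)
Your proof is correct and follows essentially the same route as the paper's. The only cosmetic difference is that the paper invokes Theorem~\ref{TheoCondContr} (via Lemma~\ref{conditionedcontrolled}) to obtain directly that $\cW^\top=\im F^t$ is $(A^t,C^t)$-controlled invariant, i.e.\ $A^t(\im F^t)\subset\im F^t\oplus\im C^t$, whereas you unpack this step by hand from the kernel inclusion $\ker\bigl(\begin{smallmatrix}F\\ C\end{smallmatrix}\bigr)\subset\ker(FA)$ and Lemma~\ref{LemmaFG}; the observer recursion is then derived identically in both arguments.
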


\begin{proof}
By Theorem~\ref{TheoCondContr} we know that $\cW^\top$ is 
$(A^t,C^t)$-controlled invariant and thus 
\[
A^t(\cW^\top)\subset \cW^\top\oplus \im C^t \; .
\]
Since $\cW^\top =\im F^t$ by Lemma~\ref{LemmaFG}, we deduce that
\[
A^tF^t=F^tU^t\oplus C^tV^t
\]
for some matrices $U$ and $V$. After transposing, 
we obtain~\eqref{e-transpose}. 
Since $\cV \subset \ker F$,
we have $z(k+1)=Fx(k+1)=FAx(k)$,
and using~\eqref{e-transpose}, we get
\[
F A x(k)=U F x(k)\oplus V C x(k)= U z(k)\oplus V y(k)\; , 
\]
which shows~\eqref{e-do}.
\end{proof}

\begin{remark}
Suppose that, given a matrix $G$, we want to 
construct an observer for reconstructing, 
from the observation and initial condition, 
the linear functional of the state of system~\eqref{e-fond}
\[
w(k)=Gx(k) \; , 
\]
where $k\geq 0$. With this aim, assume that the minimal $(C,A)$-conditioned
invariant congruence containing $\cV$
is contained in $\ker G$ and cofinitely generated, so that 
$\cV_*(C,A)=\ker F\subset \ker G$ for some matrix $F$. 
Then, if we define $z(k):=Fx(k)$ like in Theorem~\ref{TheoObserver}, 
we have $w(k)=G (-F^t) z(k)$ for all $k\geq 0$, 
where the product by $-F^t$ is performed in the semiring 
$(\R\cup\{-\infty,+\infty\},\min ,+)$ 
with the convention $(+\infty)+a=+\infty$ 
for all $a\in\R\cup\{-\infty,+\infty\}$. 
As a matter of fact, from the equality $F (-F^t) F=F$ 
(see for example~\cite{bcoq,BlythJan72}), 
it follows that  
\[
F (-F^t) z(k) = F (-F^t) F x(k) = F x(k) \; ,
\] 
and since $\ker F \subset \ker G$, 
we get $G (-F^t) z(k) = G x(k) =w(k)$. 

Therefore, by Theorem~\ref{TheoObserver}, we conclude  
that it is possible to effectively reconstruct the linear functional of 
the state $w(k)=Gx(k)$ from the observation and initial state 
if the minimal $(C,A)$-conditioned invariant congruence 
containing $\cV$ is contained in $\ker G$ and cofinitely generated.  
\end{remark}

\section{Application to a manufacturing system}\label{SApplication}

Dynamical systems of the form~\eqref{e-fond} can be used, 
for instance, to model max-plus linear dynamical systems of the form 
\[
x(k+1)=\bar{A} x(k)\; , 
\]
where some entries of $\bar{A}$ are unknown but belong to certain intervals, 
at the price of adding new variables. To see this, 
assume for example that in the max-plus linear dynamical system 
\begin{equation}\label{persystem}
\begin{pmatrix}
x_1(k+1) \cr 
x_2(k+1)\cr
\end{pmatrix}= 
\begin{pmatrix}
a_{11} & a_{12} \cr
a_{21}(k) & a_{22} \cr 
\end{pmatrix}
\begin{pmatrix}
x_1(k) \cr 
x_2(k)\cr
\end{pmatrix} \; , 
\end{equation}
 $a_{21}(k)$ can take any value in the interval $[a,b]$ for each $k\in \N$, 
but $a_{11}$, $a_{12}$ and $a_{22}$ are fixed. 
Consider the ``extended state'' vector  
$x(k)=\left( x_1(k), x_2(k), x_3(k), x_4(k) \right)^t$ 
and define 
\[ 
A= 
\begin{pmatrix}
a_{11} & a_{12} & \zero & \zero \cr
\zero & a_{22} & a & b \cr 
a_{11} & a_{12} & \zero & \zero \cr
a_{11} & a_{12} & \zero & \zero \cr
\end{pmatrix}
\; \mbox{ and } \;   
E= 
\begin{pmatrix}
0 & \zero & \zero & \zero \cr 
\zero & 0 & \zero & \zero \cr
\zero & \zero & 0 & 0 \cr
\end{pmatrix} \; .
\]
Then, as 
\[ 
Ax(k)= 
\begin{pmatrix}
a_{11}x_1(k)\oplus a_{12}x_2(k) \cr 
a_{22}x_2(k)\oplus ax_3(k)\oplus bx_4(k) \cr
a_{11}x_1(k)\oplus a_{12}x_2(k) \cr
a_{11}x_1(k)\oplus a_{12}x_2(k) \cr
\end{pmatrix} 
\; , 
\]
we have 
\[
Ex(k+1) = EAx(k)\iff \left\{ \begin{array}{l}
x_1(k+1) = a_{11}x_1(k)\oplus a_{12}x_2(k) \\
x_2(k+1) = a_{22}x_2(k)\oplus ax_3(k)\oplus bx_4(k) \\ 
x_3(k+1)\oplus x_4(k+1)= a_{11}x_1(k)\oplus a_{12}x_2(k)
\end{array}\right. \; .
\] 
Since 
\[
x_3(k+1)\oplus x_4(k+1)=a_{11}x_1(k)\oplus a_{12} x_2(k)= x_1(k+1)
\] 
it follows that 
\[ 
a x_3(k+1)\oplus b x_4(k+1)=a_{21}(k+1) x_1(k+1)
\] 
for some $a_{21}(k+1)\in [a,b]$. Thus, 
if we assume that the initial state $x(0)$ satisfies 
the condition $x_3(0)\oplus x_4(0)= x_1(0)$, for all $k\geq 0$ 
we have 
\[
x_2(k+1) = a_{22}x_2(k)\oplus ax_3(k)\oplus bx_4(k) 
= a_{22}x_2(k)\oplus a_{21}(k) x_1(k) 
\]
for some $a_{21}(k)\in [a,b]$. Therefore, the first two entries 
of the state vector of the dynamical system 
\[
Ex(k+1) = EAx(k)
\]
describe the evolution of system~\eqref{persystem}, 
in the sense that they are equal to the state vector of 
system~\eqref{persystem} corresponding to some choice of 
$a_{21}(k)$ in $[a,b]$ for each $k\in \N$, and vice versa. 

This idea can be generalized to the case of more 
than one uncertain holding time by adding two auxiliary 
variables for each of them. In particular, 
this method can be used to model manufacturing systems 
and transportation networks in which some (processing or traveling) 
times are unknown but bounded. 
When applying it, in order to satisfy the 
previous condition on the initial state of the extended state vector, 
for simplicity we will assume that $x_i(0)=\unit$ for all $i$.  

\begin{figure}
\begin{center}
\begin{picture}(0,0)%
\includegraphics{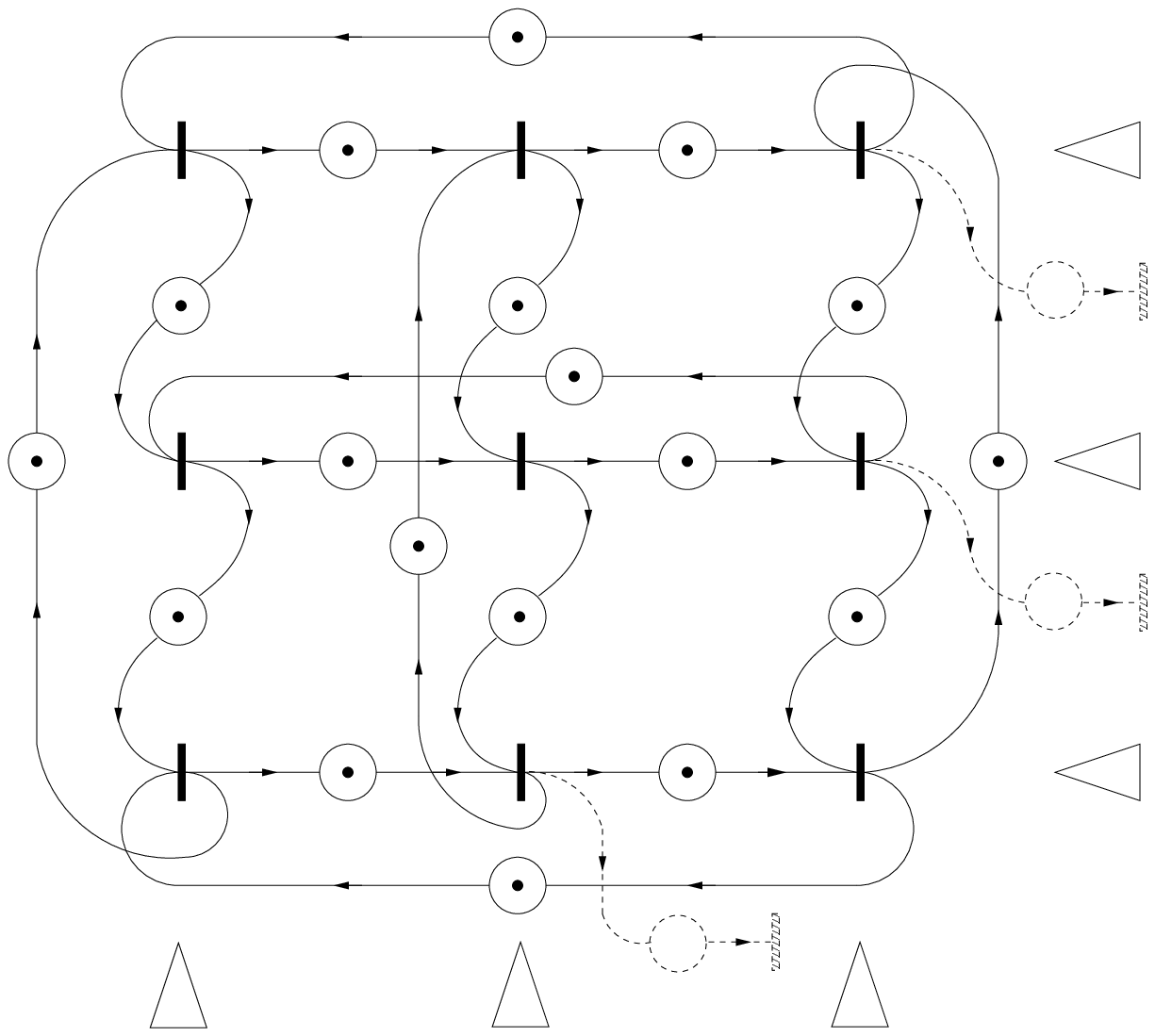}%
\end{picture}%
\setlength{\unitlength}{1895sp}%
\begingroup\makeatletter\ifx\SetFigFont\undefined%
\gdef\SetFigFont#1#2#3#4#5{%
  \reset@font\fontsize{#1}{#2pt}%
  \fontfamily{#3}\fontseries{#4}\fontshape{#5}%
  \selectfont}%
\fi\endgroup%
\begin{picture}(12765,11052)(376,-10282)
\put(5326,-5236){\makebox(0,0)[lb]{\smash{{\SetFigFont{8}{9.6}{\rmdefault}{\mddefault}{\updefault}{\color[rgb]{0,0,0}$3$}%
}}}}
\put(6376,-5986){\makebox(0,0)[lb]{\smash{{\SetFigFont{8}{9.6}{\rmdefault}{\mddefault}{\updefault}{\color[rgb]{0,0,0}$3$}%
}}}}
\put(2776,-5986){\makebox(0,0)[lb]{\smash{{\SetFigFont{8}{9.6}{\rmdefault}{\mddefault}{\updefault}{\color[rgb]{0,0,0}$4$}%
}}}}
\put(5926,-3811){\makebox(0,0)[lb]{\smash{{\SetFigFont{8}{9.6}{\rmdefault}{\mddefault}{\updefault}{\color[rgb]{0,0,0}$x_5$}%
}}}}
\put(9526,-3811){\makebox(0,0)[lb]{\smash{{\SetFigFont{8}{9.6}{\rmdefault}{\mddefault}{\updefault}{\color[rgb]{0,0,0}$x_6$}%
}}}}
\put(9526,-511){\makebox(0,0)[lb]{\smash{{\SetFigFont{8}{9.6}{\rmdefault}{\mddefault}{\updefault}{\color[rgb]{0,0,0}$x_3$}%
}}}}
\put(7726,-1486){\makebox(0,0)[lb]{\smash{{\SetFigFont{8}{9.6}{\rmdefault}{\mddefault}{\updefault}{\color[rgb]{0,0,0}$5$}%
}}}}
\put(2326,-3811){\makebox(0,0)[lb]{\smash{{\SetFigFont{8}{9.6}{\rmdefault}{\mddefault}{\updefault}{\color[rgb]{0,0,0}$x_4$}%
}}}}
\put(5926,-7111){\makebox(0,0)[lb]{\smash{{\SetFigFont{8}{9.6}{\rmdefault}{\mddefault}{\updefault}{\color[rgb]{0,0,0}$x_8$}%
}}}}
\put(2326,-7111){\makebox(0,0)[lb]{\smash{{\SetFigFont{8}{9.6}{\rmdefault}{\mddefault}{\updefault}{\color[rgb]{0,0,0}$x_7$}%
}}}}
\put(9526,-7111){\makebox(0,0)[lb]{\smash{{\SetFigFont{8}{9.6}{\rmdefault}{\mddefault}{\updefault}{\color[rgb]{0,0,0}$x_9$}%
}}}}
\put(4126,-8086){\makebox(0,0)[lb]{\smash{{\SetFigFont{8}{9.6}{\rmdefault}{\mddefault}{\updefault}{\color[rgb]{0,0,0}$5$}%
}}}}
\put(5926,-9286){\makebox(0,0)[lb]{\smash{{\SetFigFont{8}{9.6}{\rmdefault}{\mddefault}{\updefault}{\color[rgb]{0,0,0}$3$}%
}}}}
\put(7726,-4786){\makebox(0,0)[lb]{\smash{{\SetFigFont{8}{9.6}{\rmdefault}{\mddefault}{\updefault}{\color[rgb]{0,0,0}$4$}%
}}}}
\put(9976,-5986){\makebox(0,0)[lb]{\smash{{\SetFigFont{8}{9.6}{\rmdefault}{\mddefault}{\updefault}{\color[rgb]{0,0,0}$2$}%
}}}}
\put(9976,-2686){\makebox(0,0)[lb]{\smash{{\SetFigFont{8}{9.6}{\rmdefault}{\mddefault}{\updefault}{\color[rgb]{0,0,0}$5$}%
}}}}
\put(6376,-2686){\makebox(0,0)[lb]{\smash{{\SetFigFont{8}{9.6}{\rmdefault}{\mddefault}{\updefault}{\color[rgb]{0,0,0}$\left[3,5\right]$}%
}}}}
\put(2851,-2686){\makebox(0,0)[lb]{\smash{{\SetFigFont{8}{9.6}{\rmdefault}{\mddefault}{\updefault}{\color[rgb]{0,0,0}$4$}%
}}}}
\put(6526,-3886){\makebox(0,0)[lb]{\smash{{\SetFigFont{8}{9.6}{\rmdefault}{\mddefault}{\updefault}{\color[rgb]{0,0,0}$3$}%
}}}}
\put(2326,-511){\makebox(0,0)[lb]{\smash{{\SetFigFont{8}{9.6}{\rmdefault}{\mddefault}{\updefault}{\color[rgb]{0,0,0}$x_1$}%
}}}}
\put(12451,-2011){\makebox(0,0)[lb]{\smash{{\SetFigFont{8}{9.6}{\rmdefault}{\mddefault}{\updefault}{\color[rgb]{0,0,0}$y_1$}%
}}}}
\put(7651,-9886){\makebox(0,0)[lb]{\smash{{\SetFigFont{8}{9.6}{\rmdefault}{\mddefault}{\updefault}{\color[rgb]{0,0,0}$0$}%
}}}}
\put(11626,-6286){\makebox(0,0)[lb]{\smash{{\SetFigFont{8}{9.6}{\rmdefault}{\mddefault}{\updefault}{\color[rgb]{0,0,0}$0$}%
}}}}
\put(11626,-2986){\makebox(0,0)[lb]{\smash{{\SetFigFont{8}{9.6}{\rmdefault}{\mddefault}{\updefault}{\color[rgb]{0,0,0}$0$}%
}}}}
\put(8626,-9886){\makebox(0,0)[lb]{\smash{{\SetFigFont{8}{9.6}{\rmdefault}{\mddefault}{\updefault}{\color[rgb]{0,0,0}$y_3$}%
}}}}
\put(12526,-5311){\makebox(0,0)[lb]{\smash{{\SetFigFont{8}{9.6}{\rmdefault}{\mddefault}{\updefault}{\color[rgb]{0,0,0}$y_2$}%
}}}}
\put(12151,-7636){\makebox(0,0)[lb]{\smash{{\SetFigFont{8}{9.6}{\rmdefault}{\mddefault}{\updefault}{\color[rgb]{0,0,0}$M_3$}%
}}}}
\put(12151,-4336){\makebox(0,0)[lb]{\smash{{\SetFigFont{8}{9.6}{\rmdefault}{\mddefault}{\updefault}{\color[rgb]{0,0,0}$M_2$}%
}}}}
\put(12151,-1036){\makebox(0,0)[lb]{\smash{{\SetFigFont{8}{9.6}{\rmdefault}{\mddefault}{\updefault}{\color[rgb]{0,0,0}$M_1$}%
}}}}
\put(10576,-4336){\makebox(0,0)[lb]{\smash{{\SetFigFont{8}{9.6}{\rmdefault}{\mddefault}{\updefault}{\color[rgb]{0,0,0}$1$}%
}}}}
\put(376,-4336){\makebox(0,0)[lb]{\smash{{\SetFigFont{8}{9.6}{\rmdefault}{\mddefault}{\updefault}{\color[rgb]{0,0,0}$2$}%
}}}}
\put(5926,614){\makebox(0,0)[lb]{\smash{{\SetFigFont{8}{9.6}{\rmdefault}{\mddefault}{\updefault}{\color[rgb]{0,0,0}$4$}%
}}}}
\put(5926,-511){\makebox(0,0)[lb]{\smash{{\SetFigFont{8}{9.6}{\rmdefault}{\mddefault}{\updefault}{\color[rgb]{0,0,0}$x_2$}%
}}}}
\put(7726,-8086){\makebox(0,0)[lb]{\smash{{\SetFigFont{8}{9.6}{\rmdefault}{\mddefault}{\updefault}{\color[rgb]{0,0,0}$4$}%
}}}}
\put(2251,-10111){\makebox(0,0)[lb]{\smash{{\SetFigFont{8}{9.6}{\rmdefault}{\mddefault}{\updefault}{\color[rgb]{0,0,0}$P_1$}%
}}}}
\put(5851,-10111){\makebox(0,0)[lb]{\smash{{\SetFigFont{8}{9.6}{\rmdefault}{\mddefault}{\updefault}{\color[rgb]{0,0,0}$P_2$}%
}}}}
\put(9451,-10111){\makebox(0,0)[lb]{\smash{{\SetFigFont{8}{9.6}{\rmdefault}{\mddefault}{\updefault}{\color[rgb]{0,0,0}$P_3$}%
}}}}
\put(3901,-4786){\makebox(0,0)[lb]{\smash{{\SetFigFont{8}{9.6}{\rmdefault}{\mddefault}{\updefault}{\color[rgb]{0,0,0}$\left[1,3\right]$}%
}}}}
\put(3901,-1486){\makebox(0,0)[lb]{\smash{{\SetFigFont{8}{9.6}{\rmdefault}{\mddefault}{\updefault}{\color[rgb]{0,0,0}$\left[1,7\right]$}%
}}}}
\end{picture}%
\caption{A manufacturing system (flow-shop)}
\label{figure1}
\end{center}
\end{figure} 

As an example, consider the Timed Event Graph of Figure~\ref{figure1}.
This figure represents a manufacturing system (flow-shop) composed 
of three machines, denoted by $M_1$, $M_2$ and $M_3$, 
which is supposed to produce three kinds of parts, 
denoted by $P_1$, $P_2$ and $P_3$  (we refer the reader to~\cite{bcoq} 
for background on the modeling of Timed Event Graphs using max-plus algebra). 
We assume that each machine processes each part exactly once, 
that all parts follow the same sequence of machines: 
$M_1$, $M_2$ and finally $M_3$, and that the sequencing 
of part types on each machine is the same: $P_1$, $P_2$ and finally $P_3$. 
Parts are carried on pallets form one machine to the next one. 
When a part has been processed by the three machines, 
it is removed from the pallet, 
which returns to the staring point for a new part.

In Figure~\ref{figure1}, each of the nine transitions corresponds 
to a combination of a machine and a part type. For instance, the 
transition labeled $x_6$ corresponds to the combination of 
machine $M_2$ processing part $P_3$. To each transition 
corresponds a variable $x_i(k)$ which denotes the earliest time at 
which the transition can be fired for $k$-th time, that is, 
the earliest time at which a specific machine can start 
processing a specific part type for $k$-th time. 

Places between transitions express the precedence constrains 
between operations due to the sequencing of operations on the machines. 
For instance, $x_6$ depends on $x_3$, which corresponds to $M_1$ 
processing $P_3$, and on $x_5$, which corresponds to $M_2$ processing $P_2$. 
The holding time assigned to each place is determined, 
for instance, as a function of some or all of the following variables: 
the processing time of machines on parts, 
the transportation time between machines and 
the set up time on machines when switching from 
one part type to another. These times are given in Figure~\ref{figure1}. 
Note that the times $x_1\rightarrow x_2$, $x_2\rightarrow x_5$ 
and $x_4\rightarrow x_5$ are not fixed but are 
assumed to belong to the intervals $\left[ 1,7 \right]$, 
$\left[ 3,5 \right]$ and $\left[ 1,3 \right]$ respectively. 
This variation could be due, for instance, to possible breakdowns. 
All the other times are supposed to be fixed. 

For simplicity, we assume that in the initial state 
there is a token in each place. This physically means that 
each machine can process at most three parts at the same time, 
and that there are three pallets carrying each part type.

The evolution of this flow-shop can be described by a max-plus linear 
dynamical system of the form $x(k+1)=\bar{A} x(k)$, 
where $x(k)\in \rmax^9$ is the vector of $k$-th 
firing times of the nine transitions and $\bar{A}_{ij}$ is the holding time 
of the place in the arc that goes from $x_j$ to $x_i$ ($\bar{A}_{ij}=\zero$ 
if there is no such an arc, see~\cite{bcoq} for details). 
Due to the presence of uncertain holding times, 
three entries of $\bar{A}$ may vary with $k$, 
so we next use the method described above to model this system.  
After adding six auxiliary variables $x_i$, $i=10,\dots ,15$ 
(two for each uncertain holding time), the evolution of the 
flow-shop of Figure~\ref{figure1} can be described by the 
following dynamical system
\begin{equation}\label{EqEvSystem}
Ex(k+1) = EAx(k) \; , 
\end{equation}
where 
\[  
E= 
\left(
\begin{array}{ccccccccccccccc}
0 & \zero & \zero & \zero & \zero & \zero & \zero & \zero & \zero & \zero & \zero & \zero & \zero & \zero & \zero \\ 
\zero & 0 & \zero & \zero & \zero & \zero & \zero & \zero & \zero & \zero & \zero & \zero & \zero & \zero & \zero \\
\zero & \zero & 0 & \zero & \zero & \zero & \zero & \zero & \zero & \zero & \zero & \zero & \zero & \zero & \zero \\
\zero & \zero & \zero & 0 & \zero & \zero & \zero & \zero & \zero & \zero & \zero & \zero & \zero & \zero & \zero \\
\zero & \zero & \zero & \zero & 0  & \zero & \zero & \zero & \zero & \zero & \zero & \zero & \zero & \zero & \zero \\
\zero & \zero & \zero & \zero & \zero & 0 & \zero & \zero & \zero & \zero & \zero & \zero & \zero & \zero & \zero \\
\zero & \zero & \zero & \zero & \zero & \zero & 0 & \zero & \zero & \zero & \zero & \zero & \zero & \zero & \zero \\
\zero & \zero & \zero & \zero & \zero & \zero & \zero & 0 & \zero & \zero & \zero & \zero & \zero & \zero & \zero \\
\zero & \zero & \zero & \zero & \zero & \zero & \zero & \zero & 0 & \zero & \zero & \zero & \zero & \zero & \zero \\
\zero & \zero & \zero & \zero & \zero & \zero & \zero & \zero & \zero & 0 & 0 & \zero & \zero & \zero & \zero \\
\zero & \zero & \zero & \zero & \zero & \zero & \zero & \zero & \zero & \zero & \zero & 0 & 0 & \zero & \zero \\
\zero & \zero & \zero & \zero & \zero & \zero & \zero & \zero & \zero & \zero & \zero  & \zero & \zero & 0 & 0
\end{array}
\right)
\]
and 
\[ 
A= 
\left(
\begin{array}{ccccccccccccccc}
\zero & \zero & 4 & \zero & \zero & \zero & 2 & \zero & \zero & \zero & \zero & \zero & \zero & \zero & \zero \\
\zero & \zero & \zero & \zero & \zero & \zero & \zero & 3 & \zero & 1 & 7 & \zero & \zero & \zero & \zero \\ 
\zero & 5 & \zero & \zero & \zero & \zero & \zero & \zero & 1 & \zero & \zero & \zero & \zero & \zero & \zero \\ 
4 & \zero & \zero & \zero & \zero & 3 & \zero & \zero & \zero & \zero & \zero & \zero & \zero & \zero & \zero \\
\zero & \zero & \zero & \zero & \zero & \zero & \zero & \zero & \zero & \zero & \zero & 3 & 5 & 1 & 3 \\ 
\zero & \zero & 5 & \zero & 4 & \zero & \zero & \zero & \zero & \zero & \zero & \zero & \zero & \zero & \zero \\ 
\zero & \zero & \zero & 4 & \zero & \zero & \zero & \zero & 3 & \zero & \zero & \zero & \zero & \zero & \zero \\
\zero & \zero & \zero & \zero & 3 & \zero & 5 &\zero & \zero & \zero & \zero & \zero & \zero & \zero & \zero \\
\zero & \zero & \zero & \zero & \zero & 2 & \zero & 4 & \zero & \zero & \zero & \zero & \zero & \zero & \zero \\
\zero & \zero & 4 & \zero & \zero & \zero & 2 & \zero & \zero & \zero & \zero & \zero & \zero & \zero & \zero \\
\zero & \zero & 4 & \zero & \zero & \zero & 2 & \zero & \zero & \zero & \zero & \zero & \zero & \zero & \zero \\
\zero & \zero & \zero & \zero & \zero & \zero & \zero & 3 & \zero & 1 & 7 & \zero & \zero & \zero & \zero \\ 
\zero & \zero & \zero & \zero & \zero & \zero & \zero & 3 & \zero & 1 & 7 & \zero & \zero & \zero & \zero \\ 
4 & \zero & \zero & \zero & \zero & 3 & \zero & \zero & \zero & \zero & \zero & \zero & \zero & \zero & \zero \\
4 & \zero & \zero & \zero & \zero & 3 & \zero & \zero & \zero & \zero & \zero & \zero & \zero & \zero & \zero 
\end{array}
\right)  \; .
\]

Now, assume that we observe the firing times of transitions $x_3$, $x_6$ and 
$x_8$, that is, we define $y(k)=Cx(k)$, where 
\[  
C= 
\left(
\begin{array}{ccccccccccccccc}
\zero & \zero & 0 & \zero & \zero & \zero & \zero & \zero & \zero & \zero & \zero & \zero & \zero & \zero & \zero \\
\zero & \zero & \zero & \zero & \zero & 0 & \zero & \zero & \zero & \zero & \zero & \zero & \zero & \zero & \zero \\
\zero & \zero & \zero & \zero & \zero & \zero & \zero & 0 & \zero & \zero & \zero & \zero & \zero & \zero & \zero 
\end{array}
\right) \; .
\]
Note that these times have a physical meaning. For instance, $x_8$ represents 
the time at which $M_3$ starts processing $P_2$.
 
Taking into account Theorem~\ref{FGDuality}, in order to determine if there 
exists a minimal cofinitely generated $(C,A)$-conditioned invariant congruence 
$\cVg(C,A)$ containing $\cV=\ker E$, in the first place we compute the maximal 
$(A^t,C^t)$-controlled invariant semimodule $\cK^*(A^t,C^t)$ contained 
in $\cK=\cV^\top=\im E^t$. With this aim, we apply Proposition~\ref{Kclosed} 
and compute the sequence of semimodules 
\begin{equation} 
\cX_1=\cK \; \makebox{ and }\; 
\cX_{k+1}=\phi(\cX_{k}) \; \makebox{ for }\; k\in \N \; , 
\end{equation}
where 
\begin{equation}
\phi (\cX)= \cK\cap (A^t)^{-1}(\cX \oplus \im C^t) \; . 
\end{equation} 
This can be done with the help of the max-plus toolbox of Scilab. 
To be more precise, this is performed expressing the intersection and inverse 
image of finitely generated semimodules as the solution sets of appropriate 
homogeneous max-plus linear systems of equations 
(see~\cite{gaubert98n}), 
and by solving these systems using the function {\em mpsolve} of this 
toolbox (see~\cite{AGG08} for a discussion 
of the complexity of the algorithm involved). 

In this way, we obtain $\cX_4=\cX_3\varsubsetneq \cX_2\varsubsetneq \cX_1$  
and thus $\cK^*(A^t,C^t)=\cX_3$, 
where $\cX_3$ is the semimodule generated by the rows of the following matrix
\[  
F= 
\left(
\begin{array}{ccccccccccccccc}
0 & \zero & \zero & \zero & \zero & \zero & \zero & \zero & \zero & \zero & \zero & \zero & \zero & \zero & \zero \\
\zero & \zero & \zero & 0 & \zero & \zero & \zero & \zero & \zero & \zero & \zero & \zero & \zero & \zero & \zero \\
\zero & \zero & \zero & \zero & \zero & \zero & 0 & \zero & \zero & \zero & \zero & \zero & \zero & \zero & \zero \\
\zero & \zero & \zero & \zero & \zero & \zero & \zero & \zero & 0 & \zero & \zero & \zero & \zero & \zero & \zero \\
\zero & \zero & \zero & \zero & \zero & \zero & \zero & \zero & \zero & 0 & 0 & \zero & \zero & \zero & \zero \\
\zero & \zero & \zero & \zero & \zero & \zero & \zero & \zero & \zero & \zero & \zero & \zero & \zero & 0 & 0 
\end{array}
\right) \; .
\]
Therefore, by Theorem~\ref{FGDuality} we conclude that $\cVg(A,C)=\ker F$. 

By Theorem~\ref{TheoObserver} 
we know that it is possible to reconstruct the functional of the state 
$z(k):=Fx(k)$ in terms of the initial condition $x(0)$ and the observations 
$y(0),\ldots ,y(m-1)$. More precisely, we have the following dynamic 
observer allowing us to compute $z(k)$: 
\begin{equation}\label{EqObserver}
\left\{ 
\begin{array}{l}
z(k+1)=Uz(k)\oplus Vy(k)  \\
z(0)=Fx(0) 
\end{array}
\right. 
\end{equation}
where 
\[
U= 
\left(
\begin{array}{cccccc}
\zero & \zero & 2 & \zero & \zero & \zero \\
4 & \zero & \zero & \zero & \zero & \zero \\
\zero & 4 & \zero & 3 & \zero & \zero \\
\zero & \zero & \zero & \zero & \zero & \zero \\
\zero & \zero & 2 & \zero & \zero & \zero \\
4 & \zero & \zero & \zero & \zero & \zero  
\end{array}
\right)
\;
\makebox{ and }
\;
V=
\left(
\begin{array}{ccc}
4 & \zero & \zero \\
\zero & 3 & \zero \\
\zero & \zero & \zero \\
\zero & 2 & 4 \\
4 & \zero & \zero \\
\zero & 3 & \zero 
\end{array}
\right) \; .
\]
The matrices $U$ and $V$ are obtained by 
solving the equation $FA=UF\oplus VC$ 
(this kind of one sided max-plus linear systems of equations 
can be efficiently solved with the help of residuation theory, 
see~\cite{BlythJan72,bcoq,gaubert98n,cuning}). 
Observe that, due to the form of $F$, 
this in particular means that we can determine $x_1(m)$, $x_4(m)$, 
$x_7(m)$ and $x_9(m)$ in terms of the initial condition and the observations.

\begin{figure}
\begin{center}
\begin{tabular}{cc}

\begin{picture}(0,0)%
\includegraphics{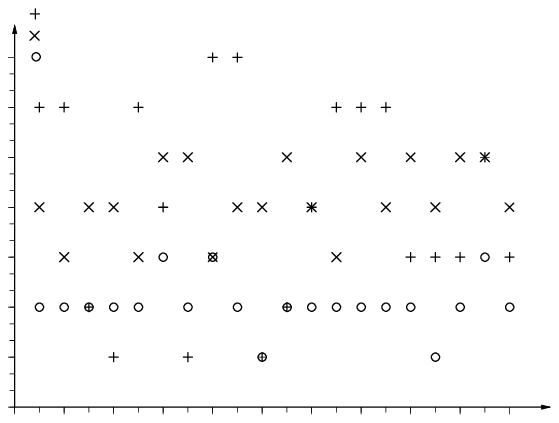}%
\end{picture}%
\setlength{\unitlength}{1302sp}%
\begingroup\makeatletter\ifx\SetFigFont\undefined%
\gdef\SetFigFont#1#2#3#4#5{%
  \reset@font\fontsize{#1}{#2pt}%
  \fontfamily{#3}\fontseries{#4}\fontshape{#5}%
  \selectfont}%
\fi\endgroup%
\begin{picture}(8278,6298)(901,-5459)
\put(1156,-5410){\makebox(0,0)[lb]{\smash{{\SetFigFont{5}{6.0}{\familydefault}{\mddefault}{\updefault}{\color[rgb]{0,0,0}$0$}%
}}}}
\put(1876,-5410){\makebox(0,0)[lb]{\smash{{\SetFigFont{5}{6.0}{\familydefault}{\mddefault}{\updefault}{\color[rgb]{0,0,0}$2$}%
}}}}
\put(2596,-5410){\makebox(0,0)[lb]{\smash{{\SetFigFont{5}{6.0}{\familydefault}{\mddefault}{\updefault}{\color[rgb]{0,0,0}$4$}%
}}}}
\put(3316,-5410){\makebox(0,0)[lb]{\smash{{\SetFigFont{5}{6.0}{\familydefault}{\mddefault}{\updefault}{\color[rgb]{0,0,0}$6$}%
}}}}
\put(4036,-5410){\makebox(0,0)[lb]{\smash{{\SetFigFont{5}{6.0}{\familydefault}{\mddefault}{\updefault}{\color[rgb]{0,0,0}$8$}%
}}}}
\put(4710,-5410){\makebox(0,0)[lb]{\smash{{\SetFigFont{5}{6.0}{\familydefault}{\mddefault}{\updefault}{\color[rgb]{0,0,0}$10$}%
}}}}
\put(5430,-5410){\makebox(0,0)[lb]{\smash{{\SetFigFont{5}{6.0}{\familydefault}{\mddefault}{\updefault}{\color[rgb]{0,0,0}$12$}%
}}}}
\put(6150,-5410){\makebox(0,0)[lb]{\smash{{\SetFigFont{5}{6.0}{\familydefault}{\mddefault}{\updefault}{\color[rgb]{0,0,0}$14$}%
}}}}
\put(6870,-5410){\makebox(0,0)[lb]{\smash{{\SetFigFont{5}{6.0}{\familydefault}{\mddefault}{\updefault}{\color[rgb]{0,0,0}$16$}%
}}}}
\put(7590,-5410){\makebox(0,0)[lb]{\smash{{\SetFigFont{5}{6.0}{\familydefault}{\mddefault}{\updefault}{\color[rgb]{0,0,0}$18$}%
}}}}
\put(8310,-5410){\makebox(0,0)[lb]{\smash{{\SetFigFont{5}{6.0}{\familydefault}{\mddefault}{\updefault}{\color[rgb]{0,0,0}$20$}%
}}}}
\put(901,-4466){\makebox(0,0)[lb]{\smash{{\SetFigFont{5}{6.0}{\familydefault}{\mddefault}{\updefault}{\color[rgb]{0,0,0}$1$}%
}}}}
\put(901,-3012){\makebox(0,0)[lb]{\smash{{\SetFigFont{5}{6.0}{\familydefault}{\mddefault}{\updefault}{\color[rgb]{0,0,0}$3$}%
}}}}
\put(901,-2286){\makebox(0,0)[lb]{\smash{{\SetFigFont{5}{6.0}{\familydefault}{\mddefault}{\updefault}{\color[rgb]{0,0,0}$4$}%
}}}}
\put(901,-1559){\makebox(0,0)[lb]{\smash{{\SetFigFont{5}{6.0}{\familydefault}{\mddefault}{\updefault}{\color[rgb]{0,0,0}$5$}%
}}}}
\put(901,-832){\makebox(0,0)[lb]{\smash{{\SetFigFont{5}{6.0}{\familydefault}{\mddefault}{\updefault}{\color[rgb]{0,0,0}$6$}%
}}}}
\put(901,-105){\makebox(0,0)[lb]{\smash{{\SetFigFont{5}{6.0}{\familydefault}{\mddefault}{\updefault}{\color[rgb]{0,0,0}$7$}%
}}}}
\put(901,-5161){\makebox(0,0)[lb]{\smash{{\SetFigFont{5}{6.0}{\familydefault}{\mddefault}{\updefault}{\color[rgb]{0,0,0}$0$}%
}}}}
\put(901,-3736){\makebox(0,0)[lb]{\smash{{\SetFigFont{5}{6.0}{\familydefault}{\mddefault}{\updefault}{\color[rgb]{0,0,0}$2$}%
}}}}
\put(8626,-4936){\makebox(0,0)[lb]{\smash{{\SetFigFont{8}{9.6}{\familydefault}{\mddefault}{\updefault}{\color[rgb]{0,0,0}$k$}%
}}}}
\put(1651,-61){\makebox(0,0)[lb]{\smash{{\SetFigFont{8}{9.6}{\familydefault}{\mddefault}{\updefault}{\color[rgb]{0,0,0}$x_4\rightarrow x_5$}%
}}}}
\put(1651,239){\makebox(0,0)[lb]{\smash{{\SetFigFont{8}{9.6}{\familydefault}{\mddefault}{\updefault}{\color[rgb]{0,0,0}$x_2\rightarrow x_5$}%
}}}}
\put(1651,539){\makebox(0,0)[lb]{\smash{{\SetFigFont{8}{9.6}{\familydefault}{\mddefault}{\updefault}{\color[rgb]{0,0,0}$x_1\rightarrow x_2$}%
}}}}
\end{picture}%

&

\begin{picture}(0,0)%
\includegraphics{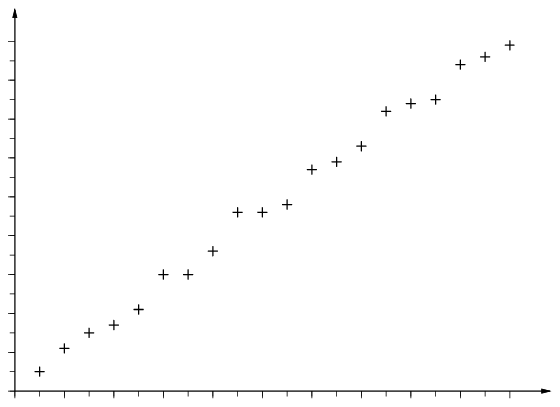}%
\end{picture}%
\setlength{\unitlength}{1302sp}%
\begingroup\makeatletter\ifx\SetFigFont\undefined%
\gdef\SetFigFont#1#2#3#4#5{%
  \reset@font\fontsize{#1}{#2pt}%
  \fontfamily{#3}\fontseries{#4}\fontshape{#5}%
  \selectfont}%
\fi\endgroup%
\begin{picture}(8458,5952)(721,-5459)
\put(1156,-5410){\makebox(0,0)[lb]{\smash{{\SetFigFont{5}{6.0}{\familydefault}{\mddefault}{\updefault}{\color[rgb]{0,0,0}$0$}%
}}}}
\put(1876,-5410){\makebox(0,0)[lb]{\smash{{\SetFigFont{5}{6.0}{\familydefault}{\mddefault}{\updefault}{\color[rgb]{0,0,0}$2$}%
}}}}
\put(2596,-5410){\makebox(0,0)[lb]{\smash{{\SetFigFont{5}{6.0}{\familydefault}{\mddefault}{\updefault}{\color[rgb]{0,0,0}$4$}%
}}}}
\put(3316,-5410){\makebox(0,0)[lb]{\smash{{\SetFigFont{5}{6.0}{\familydefault}{\mddefault}{\updefault}{\color[rgb]{0,0,0}$6$}%
}}}}
\put(4036,-5410){\makebox(0,0)[lb]{\smash{{\SetFigFont{5}{6.0}{\familydefault}{\mddefault}{\updefault}{\color[rgb]{0,0,0}$8$}%
}}}}
\put(4710,-5410){\makebox(0,0)[lb]{\smash{{\SetFigFont{5}{6.0}{\familydefault}{\mddefault}{\updefault}{\color[rgb]{0,0,0}$10$}%
}}}}
\put(5430,-5410){\makebox(0,0)[lb]{\smash{{\SetFigFont{5}{6.0}{\familydefault}{\mddefault}{\updefault}{\color[rgb]{0,0,0}$12$}%
}}}}
\put(6150,-5410){\makebox(0,0)[lb]{\smash{{\SetFigFont{5}{6.0}{\familydefault}{\mddefault}{\updefault}{\color[rgb]{0,0,0}$14$}%
}}}}
\put(6870,-5410){\makebox(0,0)[lb]{\smash{{\SetFigFont{5}{6.0}{\familydefault}{\mddefault}{\updefault}{\color[rgb]{0,0,0}$16$}%
}}}}
\put(7590,-5410){\makebox(0,0)[lb]{\smash{{\SetFigFont{5}{6.0}{\familydefault}{\mddefault}{\updefault}{\color[rgb]{0,0,0}$18$}%
}}}}
\put(8310,-5410){\makebox(0,0)[lb]{\smash{{\SetFigFont{5}{6.0}{\familydefault}{\mddefault}{\updefault}{\color[rgb]{0,0,0}$20$}%
}}}}
\put(721,-4628){\makebox(0,0)[lb]{\smash{{\SetFigFont{5}{6.0}{\familydefault}{\mddefault}{\updefault}{\color[rgb]{0,0,0}$10$}%
}}}}
\put(721,-4062){\makebox(0,0)[lb]{\smash{{\SetFigFont{5}{6.0}{\familydefault}{\mddefault}{\updefault}{\color[rgb]{0,0,0}$20$}%
}}}}
\put(721,-3497){\makebox(0,0)[lb]{\smash{{\SetFigFont{5}{6.0}{\familydefault}{\mddefault}{\updefault}{\color[rgb]{0,0,0}$30$}%
}}}}
\put(721,-2932){\makebox(0,0)[lb]{\smash{{\SetFigFont{5}{6.0}{\familydefault}{\mddefault}{\updefault}{\color[rgb]{0,0,0}$40$}%
}}}}
\put(721,-2366){\makebox(0,0)[lb]{\smash{{\SetFigFont{5}{6.0}{\familydefault}{\mddefault}{\updefault}{\color[rgb]{0,0,0}$50$}%
}}}}
\put(721,-1801){\makebox(0,0)[lb]{\smash{{\SetFigFont{5}{6.0}{\familydefault}{\mddefault}{\updefault}{\color[rgb]{0,0,0}$60$}%
}}}}
\put(721,-1236){\makebox(0,0)[lb]{\smash{{\SetFigFont{5}{6.0}{\familydefault}{\mddefault}{\updefault}{\color[rgb]{0,0,0}$70$}%
}}}}
\put(721,-670){\makebox(0,0)[lb]{\smash{{\SetFigFont{5}{6.0}{\familydefault}{\mddefault}{\updefault}{\color[rgb]{0.000,0.000,0.000}$80$}%
}}}}
\put(721,-136){\makebox(0,0)[lb]{\smash{{\SetFigFont{5}{6.0}{\familydefault}{\mddefault}{\updefault}{\color[rgb]{0,0,0}$90$}%
}}}}
\put(901,-5193){\makebox(0,0)[lb]{\smash{{\SetFigFont{5}{6.0}{\familydefault}{\mddefault}{\updefault}{\color[rgb]{0,0,0}$0$}%
}}}}
\put(1351, 89){\makebox(0,0)[lb]{\smash{{\SetFigFont{8}{9.6}{\familydefault}{\mddefault}{\updefault}{\color[rgb]{0,0,0}$y_1(k)$}%
}}}}
\put(8626,-4936){\makebox(0,0)[lb]{\smash{{\SetFigFont{8}{9.6}{\familydefault}{\mddefault}{\updefault}{\color[rgb]{0,0,0}$k$}%
}}}}
\end{picture}%
 
\\

\begin{picture}(0,0)%
\includegraphics{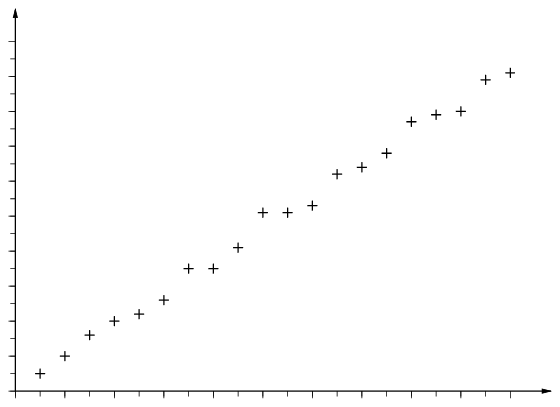}%
\end{picture}%
\setlength{\unitlength}{1302sp}%
\begingroup\makeatletter\ifx\SetFigFont\undefined%
\gdef\SetFigFont#1#2#3#4#5{%
  \reset@font\fontsize{#1}{#2pt}%
  \fontfamily{#3}\fontseries{#4}\fontshape{#5}%
  \selectfont}%
\fi\endgroup%
\begin{picture}(8638,5952)(541,-5459)
\put(1156,-5410){\makebox(0,0)[lb]{\smash{{\SetFigFont{5}{6.0}{\familydefault}{\mddefault}{\updefault}{\color[rgb]{0,0,0}$0$}%
}}}}
\put(1876,-5410){\makebox(0,0)[lb]{\smash{{\SetFigFont{5}{6.0}{\familydefault}{\mddefault}{\updefault}{\color[rgb]{0,0,0}$2$}%
}}}}
\put(2596,-5410){\makebox(0,0)[lb]{\smash{{\SetFigFont{5}{6.0}{\familydefault}{\mddefault}{\updefault}{\color[rgb]{0,0,0}$4$}%
}}}}
\put(3316,-5410){\makebox(0,0)[lb]{\smash{{\SetFigFont{5}{6.0}{\familydefault}{\mddefault}{\updefault}{\color[rgb]{0,0,0}$6$}%
}}}}
\put(4036,-5410){\makebox(0,0)[lb]{\smash{{\SetFigFont{5}{6.0}{\familydefault}{\mddefault}{\updefault}{\color[rgb]{0,0,0}$8$}%
}}}}
\put(4710,-5410){\makebox(0,0)[lb]{\smash{{\SetFigFont{5}{6.0}{\familydefault}{\mddefault}{\updefault}{\color[rgb]{0,0,0}$10$}%
}}}}
\put(5430,-5410){\makebox(0,0)[lb]{\smash{{\SetFigFont{5}{6.0}{\familydefault}{\mddefault}{\updefault}{\color[rgb]{0,0,0}$12$}%
}}}}
\put(6150,-5410){\makebox(0,0)[lb]{\smash{{\SetFigFont{5}{6.0}{\familydefault}{\mddefault}{\updefault}{\color[rgb]{0,0,0}$14$}%
}}}}
\put(6870,-5410){\makebox(0,0)[lb]{\smash{{\SetFigFont{5}{6.0}{\familydefault}{\mddefault}{\updefault}{\color[rgb]{0,0,0}$16$}%
}}}}
\put(7590,-5410){\makebox(0,0)[lb]{\smash{{\SetFigFont{5}{6.0}{\familydefault}{\mddefault}{\updefault}{\color[rgb]{0,0,0}$18$}%
}}}}
\put(8310,-5410){\makebox(0,0)[lb]{\smash{{\SetFigFont{5}{6.0}{\familydefault}{\mddefault}{\updefault}{\color[rgb]{0,0,0}$20$}%
}}}}
\put(901,-5193){\makebox(0,0)[lb]{\smash{{\SetFigFont{5}{6.0}{\familydefault}{\mddefault}{\updefault}{\color[rgb]{0,0,0}$0$}%
}}}}
\put(721,-4684){\makebox(0,0)[lb]{\smash{{\SetFigFont{5}{6.0}{\familydefault}{\mddefault}{\updefault}{\color[rgb]{0,0,0}$10$}%
}}}}
\put(721,-4175){\makebox(0,0)[lb]{\smash{{\SetFigFont{5}{6.0}{\familydefault}{\mddefault}{\updefault}{\color[rgb]{0,0,0}$20$}%
}}}}
\put(721,-3667){\makebox(0,0)[lb]{\smash{{\SetFigFont{5}{6.0}{\familydefault}{\mddefault}{\updefault}{\color[rgb]{0,0,0}$30$}%
}}}}
\put(721,-3158){\makebox(0,0)[lb]{\smash{{\SetFigFont{5}{6.0}{\familydefault}{\mddefault}{\updefault}{\color[rgb]{0,0,0}$40$}%
}}}}
\put(721,-2649){\makebox(0,0)[lb]{\smash{{\SetFigFont{5}{6.0}{\familydefault}{\mddefault}{\updefault}{\color[rgb]{0,0,0}$50$}%
}}}}
\put(721,-2140){\makebox(0,0)[lb]{\smash{{\SetFigFont{5}{6.0}{\familydefault}{\mddefault}{\updefault}{\color[rgb]{0,0,0}$60$}%
}}}}
\put(721,-1631){\makebox(0,0)[lb]{\smash{{\SetFigFont{5}{6.0}{\familydefault}{\mddefault}{\updefault}{\color[rgb]{0,0,0}$70$}%
}}}}
\put(721,-1123){\makebox(0,0)[lb]{\smash{{\SetFigFont{5}{6.0}{\familydefault}{\mddefault}{\updefault}{\color[rgb]{0,0,0}$80$}%
}}}}
\put(721,-614){\makebox(0,0)[lb]{\smash{{\SetFigFont{5}{6.0}{\familydefault}{\mddefault}{\updefault}{\color[rgb]{0,0,0}$90$}%
}}}}
\put(541,-105){\makebox(0,0)[lb]{\smash{{\SetFigFont{5}{6.0}{\familydefault}{\mddefault}{\updefault}{\color[rgb]{0,0,0}$100$}%
}}}}
\put(8626,-4936){\makebox(0,0)[lb]{\smash{{\SetFigFont{8}{9.6}{\familydefault}{\mddefault}{\updefault}{\color[rgb]{0,0,0}$k$}%
}}}}
\put(1351, 89){\makebox(0,0)[lb]{\smash{{\SetFigFont{8}{9.6}{\familydefault}{\mddefault}{\updefault}{\color[rgb]{0,0,0}$y_2(k)$}%
}}}}
\end{picture}%
 
&

\begin{picture}(0,0)%
\includegraphics{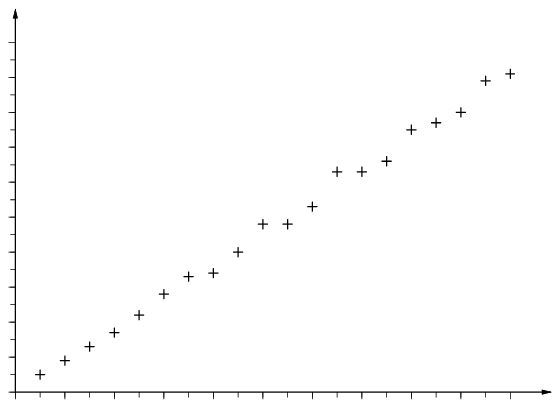}%
\end{picture}%
\setlength{\unitlength}{1302sp}%
\begingroup\makeatletter\ifx\SetFigFont\undefined%
\gdef\SetFigFont#1#2#3#4#5{%
  \reset@font\fontsize{#1}{#2pt}%
  \fontfamily{#3}\fontseries{#4}\fontshape{#5}%
  \selectfont}%
\fi\endgroup%
\begin{picture}(8563,5960)(541,-5459)
\put(1156,-5410){\makebox(0,0)[lb]{\smash{{\SetFigFont{5}{6.0}{\familydefault}{\mddefault}{\updefault}{\color[rgb]{0,0,0}$0$}%
}}}}
\put(1876,-5410){\makebox(0,0)[lb]{\smash{{\SetFigFont{5}{6.0}{\familydefault}{\mddefault}{\updefault}{\color[rgb]{0,0,0}$2$}%
}}}}
\put(2596,-5410){\makebox(0,0)[lb]{\smash{{\SetFigFont{5}{6.0}{\familydefault}{\mddefault}{\updefault}{\color[rgb]{0,0,0}$4$}%
}}}}
\put(3316,-5410){\makebox(0,0)[lb]{\smash{{\SetFigFont{5}{6.0}{\familydefault}{\mddefault}{\updefault}{\color[rgb]{0,0,0}$6$}%
}}}}
\put(4036,-5410){\makebox(0,0)[lb]{\smash{{\SetFigFont{5}{6.0}{\familydefault}{\mddefault}{\updefault}{\color[rgb]{0,0,0}$8$}%
}}}}
\put(4710,-5410){\makebox(0,0)[lb]{\smash{{\SetFigFont{5}{6.0}{\familydefault}{\mddefault}{\updefault}{\color[rgb]{0,0,0}$10$}%
}}}}
\put(5430,-5410){\makebox(0,0)[lb]{\smash{{\SetFigFont{5}{6.0}{\familydefault}{\mddefault}{\updefault}{\color[rgb]{0,0,0}$12$}%
}}}}
\put(6150,-5410){\makebox(0,0)[lb]{\smash{{\SetFigFont{5}{6.0}{\familydefault}{\mddefault}{\updefault}{\color[rgb]{0,0,0}$14$}%
}}}}
\put(6870,-5410){\makebox(0,0)[lb]{\smash{{\SetFigFont{5}{6.0}{\familydefault}{\mddefault}{\updefault}{\color[rgb]{0,0,0}$16$}%
}}}}
\put(7590,-5410){\makebox(0,0)[lb]{\smash{{\SetFigFont{5}{6.0}{\familydefault}{\mddefault}{\updefault}{\color[rgb]{0,0,0}$18$}%
}}}}
\put(8310,-5410){\makebox(0,0)[lb]{\smash{{\SetFigFont{5}{6.0}{\familydefault}{\mddefault}{\updefault}{\color[rgb]{0,0,0}$20$}%
}}}}
\put(901,-5193){\makebox(0,0)[lb]{\smash{{\SetFigFont{5}{6.0}{\familydefault}{\mddefault}{\updefault}{\color[rgb]{0,0,0}$0$}%
}}}}
\put(721,-4684){\makebox(0,0)[lb]{\smash{{\SetFigFont{5}{6.0}{\familydefault}{\mddefault}{\updefault}{\color[rgb]{0,0,0}$10$}%
}}}}
\put(721,-4175){\makebox(0,0)[lb]{\smash{{\SetFigFont{5}{6.0}{\familydefault}{\mddefault}{\updefault}{\color[rgb]{0,0,0}$20$}%
}}}}
\put(721,-3667){\makebox(0,0)[lb]{\smash{{\SetFigFont{5}{6.0}{\familydefault}{\mddefault}{\updefault}{\color[rgb]{0,0,0}$30$}%
}}}}
\put(721,-3158){\makebox(0,0)[lb]{\smash{{\SetFigFont{5}{6.0}{\familydefault}{\mddefault}{\updefault}{\color[rgb]{0,0,0}$40$}%
}}}}
\put(721,-2649){\makebox(0,0)[lb]{\smash{{\SetFigFont{5}{6.0}{\familydefault}{\mddefault}{\updefault}{\color[rgb]{0,0,0}$50$}%
}}}}
\put(721,-2140){\makebox(0,0)[lb]{\smash{{\SetFigFont{5}{6.0}{\familydefault}{\mddefault}{\updefault}{\color[rgb]{0,0,0}$60$}%
}}}}
\put(721,-1631){\makebox(0,0)[lb]{\smash{{\SetFigFont{5}{6.0}{\familydefault}{\mddefault}{\updefault}{\color[rgb]{0,0,0}$70$}%
}}}}
\put(721,-1123){\makebox(0,0)[lb]{\smash{{\SetFigFont{5}{6.0}{\familydefault}{\mddefault}{\updefault}{\color[rgb]{0,0,0}$80$}%
}}}}
\put(721,-614){\makebox(0,0)[lb]{\smash{{\SetFigFont{5}{6.0}{\familydefault}{\mddefault}{\updefault}{\color[rgb]{0,0,0}$90$}%
}}}}
\put(541,-105){\makebox(0,0)[lb]{\smash{{\SetFigFont{5}{6.0}{\familydefault}{\mddefault}{\updefault}{\color[rgb]{0,0,0}$100$}%
}}}}
\put(8551,-4936){\makebox(0,0)[lb]{\smash{{\SetFigFont{8}{9.6}{\familydefault}{\mddefault}{\updefault}{\color[rgb]{0,0,0}$k$}%
}}}}
\put(1351, 89){\makebox(0,0)[lb]{\smash{{\SetFigFont{8}{9.6}{\familydefault}{\mddefault}{\updefault}{\color[rgb]{0,0,0}$y_3(k)$}%
}}}}
\end{picture}%

\end{tabular}
\caption{Random holding times and the corresponding output trajectory}
\label{figure2}
\end{center}
\end{figure}

In Figure~\ref{figure2} we represented the output trajectory 
(corresponding to the initial condition $x_i(0)=\unit$, for $i=1,\dots ,9$) 
of the usual description, that is through a max-plus linear dynamical system 
of the form
\[
\left\{ 
\begin{array}{l}
x(k+1)=\bar{A} x(k)   \\
y(k)=C x(k) 
\end{array}
\right. \; ,
\]
of the flow-shop of Figure~\ref{figure1}, 
when the uncertain holding times take the values given on the upper left-hand 
side of Figure~\ref{figure2}. These times have been generated at random, 
in their respective intervals, using Scilab. 
With this output trajectory and the initial condition, 
we computed the sequence $\left\{z(k)\right\}_{k\in \N}$ 
given by the dynamic observer~\eqref{EqObserver}. In particular, 
in Figure~\ref{figure3} we represented the sequence 
$\left\{z_3(k)\right\}_{k\in \N}$  which is equal to the 
sequence  $\left\{x_7(k)\right\}_{k\in \N}$ of firing times of the 
seventh transition $x_7$ because $z_3(k)=x_7(k)$ by the form of $F$.  

\begin{figure}
\begin{center}

\begin{picture}(0,0)%
\includegraphics{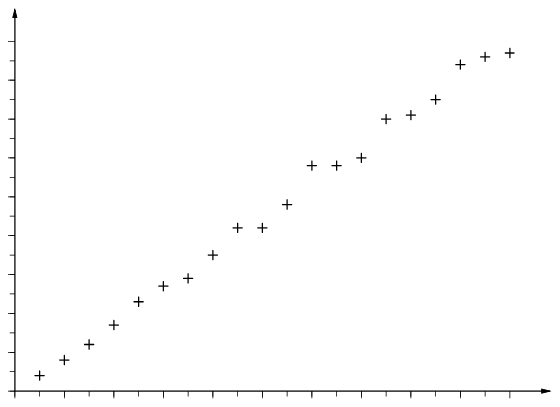}%
\end{picture}%
\setlength{\unitlength}{1302sp}%
\begingroup\makeatletter\ifx\SetFigFont\undefined%
\gdef\SetFigFont#1#2#3#4#5{%
  \reset@font\fontsize{#1}{#2pt}%
  \fontfamily{#3}\fontseries{#4}\fontshape{#5}%
  \selectfont}%
\fi\endgroup%
\begin{picture}(8458,5952)(721,-5459)
\put(1156,-5410){\makebox(0,0)[lb]{\smash{{\SetFigFont{5}{6.0}{\familydefault}{\mddefault}{\updefault}{\color[rgb]{0,0,0}$0$}%
}}}}
\put(1876,-5410){\makebox(0,0)[lb]{\smash{{\SetFigFont{5}{6.0}{\familydefault}{\mddefault}{\updefault}{\color[rgb]{0,0,0}$2$}%
}}}}
\put(2596,-5410){\makebox(0,0)[lb]{\smash{{\SetFigFont{5}{6.0}{\familydefault}{\mddefault}{\updefault}{\color[rgb]{0,0,0}$4$}%
}}}}
\put(3316,-5410){\makebox(0,0)[lb]{\smash{{\SetFigFont{5}{6.0}{\familydefault}{\mddefault}{\updefault}{\color[rgb]{0,0,0}$6$}%
}}}}
\put(4036,-5410){\makebox(0,0)[lb]{\smash{{\SetFigFont{5}{6.0}{\familydefault}{\mddefault}{\updefault}{\color[rgb]{0,0,0}$8$}%
}}}}
\put(4710,-5410){\makebox(0,0)[lb]{\smash{{\SetFigFont{5}{6.0}{\familydefault}{\mddefault}{\updefault}{\color[rgb]{0,0,0}$10$}%
}}}}
\put(5430,-5410){\makebox(0,0)[lb]{\smash{{\SetFigFont{5}{6.0}{\familydefault}{\mddefault}{\updefault}{\color[rgb]{0,0,0}$12$}%
}}}}
\put(6150,-5410){\makebox(0,0)[lb]{\smash{{\SetFigFont{5}{6.0}{\familydefault}{\mddefault}{\updefault}{\color[rgb]{0,0,0}$14$}%
}}}}
\put(6870,-5410){\makebox(0,0)[lb]{\smash{{\SetFigFont{5}{6.0}{\familydefault}{\mddefault}{\updefault}{\color[rgb]{0,0,0}$16$}%
}}}}
\put(7590,-5410){\makebox(0,0)[lb]{\smash{{\SetFigFont{5}{6.0}{\familydefault}{\mddefault}{\updefault}{\color[rgb]{0,0,0}$18$}%
}}}}
\put(8310,-5410){\makebox(0,0)[lb]{\smash{{\SetFigFont{5}{6.0}{\familydefault}{\mddefault}{\updefault}{\color[rgb]{0,0,0}$20$}%
}}}}
\put(901,-5193){\makebox(0,0)[lb]{\smash{{\SetFigFont{5}{6.0}{\familydefault}{\mddefault}{\updefault}{\color[rgb]{0,0,0}$0$}%
}}}}
\put(721,-4628){\makebox(0,0)[lb]{\smash{{\SetFigFont{5}{6.0}{\familydefault}{\mddefault}{\updefault}{\color[rgb]{0,0,0}$10$}%
}}}}
\put(721,-4062){\makebox(0,0)[lb]{\smash{{\SetFigFont{5}{6.0}{\familydefault}{\mddefault}{\updefault}{\color[rgb]{0,0,0}$20$}%
}}}}
\put(721,-3497){\makebox(0,0)[lb]{\smash{{\SetFigFont{5}{6.0}{\familydefault}{\mddefault}{\updefault}{\color[rgb]{0,0,0}$30$}%
}}}}
\put(721,-2932){\makebox(0,0)[lb]{\smash{{\SetFigFont{5}{6.0}{\familydefault}{\mddefault}{\updefault}{\color[rgb]{0,0,0}$40$}%
}}}}
\put(721,-2366){\makebox(0,0)[lb]{\smash{{\SetFigFont{5}{6.0}{\familydefault}{\mddefault}{\updefault}{\color[rgb]{0,0,0}$50$}%
}}}}
\put(721,-1801){\makebox(0,0)[lb]{\smash{{\SetFigFont{5}{6.0}{\familydefault}{\mddefault}{\updefault}{\color[rgb]{0,0,0}$60$}%
}}}}
\put(721,-1236){\makebox(0,0)[lb]{\smash{{\SetFigFont{5}{6.0}{\familydefault}{\mddefault}{\updefault}{\color[rgb]{0,0,0}$70$}%
}}}}
\put(721,-670){\makebox(0,0)[lb]{\smash{{\SetFigFont{5}{6.0}{\familydefault}{\mddefault}{\updefault}{\color[rgb]{0,0,0}$80$}%
}}}}
\put(721,-105){\makebox(0,0)[lb]{\smash{{\SetFigFont{5}{6.0}{\familydefault}{\mddefault}{\updefault}{\color[rgb]{0,0,0}$90$}%
}}}}
\put(1351,164){\makebox(0,0)[lb]{\smash{{\SetFigFont{8}{9.6}{\familydefault}{\mddefault}{\updefault}{\color[rgb]{0,0,0}$x_7(k)=z_3(k)$}%
}}}}
\put(8626,-4936){\makebox(0,0)[lb]{\smash{{\SetFigFont{8}{9.6}{\familydefault}{\mddefault}{\updefault}{\color[rgb]{0,0,0}$k$}%
}}}}
\end{picture}%

\caption{The (exact) state trajectory reconstructed by the observer}
\label{figure3}
\end{center}
\end{figure}

Indeed, it is possible to directly check the dynamic 
observer~\eqref{EqObserver}. Assume that $z(k)=F x(k)$. Then, 
for instance, by~\eqref{EqObserver} we have 
\[
z_3(k+1) = 4 z_2(k)\oplus 3 z_4(k)= 4 x_4(k)\oplus 3 x_9(k) \; , 
\]
because $z_2(k)= x_4(k)$ and $z_4(k)=x_9(k)$, 
and by~\eqref{EqEvSystem} we know that 
\[
x_7(k+1) = 4 x_4(k)\oplus 3 x_9(k) \; . 
\]
Therefore, $z_3(k+1)=x_7(k+1)$.

Let us finally mention that Timed Event Graphs in which the number 
of initial tokens and holding times are only known to belong to certain 
intervals have been considered in~\cite{LHCJ04}. This work addresses the 
existence and computation of a robust control set in order to guarantee that 
the output of the controlled system is contained in a set of reference 
outputs. In contrast to the present paper, it is based on interval 
analysis in dioids, residuation theory and transfer series methods, 
and does not address any observation problem.


\begin{thebibliography}{HOvdW06}

\bibitem[AGG08]{AGG08}
X.~Allamigeon, S.~Gaubert, and \'E. Goubault.
\newblock Inferring min and max invariants using max-plus polyhedra.
\newblock In {\em Proceedings of the 15th International Static Analysis
  Symposium (SAS'08)}, volume 5079 of {\em Lecture Notes in Comput. Sci.},
  pages 189--204. Springer, 2008.

\bibitem[BCOQ92]{bcoq}
F.~Baccelli, G.~Cohen, G.~J. Olsder, and J.~P. Quadrat.
\newblock {\em Synchronization and Linearity}.
\newblock Wiley, Chichester, 1992.

\bibitem[BH84]{butkovicH}
P.~Butkovi\v{c} and G.~Heged\"{u}s.
\newblock An elimination method for finding all solutions of the system of
  linear equations over an extremal algebra.
\newblock {\em Ekonomicko-matematicky Obzor}, 20(2):203--215, 1984.

\bibitem[BJ72]{BlythJan72}
T.~S. Blyth and M.~F. Janowitz.
\newblock {\em Residuation Theory}.
\newblock Pergamon Press, London, 1972.

\bibitem[BM69]{BasMar69}
G.~Basile and G.~Marro.
\newblock Controlled and conditioned invariant subspaces in linear system
  theory.
\newblock {\em Journal of Optimization Theory and Applications}, 3(5):306--315,
  1969.

\bibitem[BM91]{BasMar91}
G.~Basile and G.~Marro.
\newblock {\em Controlled and Conditioned Invariants in Linear System Theory}.
\newblock Prentice Hall, 1991.

\bibitem[BSS07]{BSS}
P.~Butkovi\v{c}, H.~Schneider, and S.~Sergeev.
\newblock Generators, extremals and bases of max cones.
\newblock {\em Linear Algebra and Appl.}, 421:394--406, 2007.
\newblock Also eprint \arxiv{math.RA/0604454}.

\bibitem[CDQV85]{cohen85a}
G.~Cohen, D.~Dubois, J.~P. Quadrat, and M.~Viot.
\newblock A linear system theoretic view of discrete event processes and its
  use for performance evaluation in manufacturing.
\newblock {\em IEEE Trans. on Automatic Control}, AC--30:210--220, 1985.

\bibitem[CG79]{cuning}
R.~A. Cuninghame-Green.
\newblock {\em Minimax Algebra}.
\newblock Number 166 in Lecture Notes in Economics and Mathematical Systems.
  Springer, Berlin, 1979.

\bibitem[CGQ96]{CGQ96a}
G.~Cohen, S.~Gaubert, and J.~P. Quadrat.
\newblock Kernels, images and projections in dioids.
\newblock In {\em Proceedings of WODES'96}, pages 151--158, Edinburgh, August
  1996. IEE.

\bibitem[CGQ97]{CGQ97a}
G.~Cohen, S.~Gaubert, and J.~P. Quadrat.
\newblock Linear projectors in the max-plus algebra.
\newblock In {\em Proceedings of the IEEE Mediterranean Conference}, Cyprus,
  1997. IEEE.

\bibitem[CGQ99]{ccggq99}
G.~Cohen, S.~Gaubert, and J.~P. Quadrat.
\newblock Max-plus algebra and system theory: where we are and where to go now.
\newblock {\em Annual Reviews in Control}, 23:207--219, 1999.

\bibitem[CGQ04]{cgq02}
G.~Cohen, S.~Gaubert, and J.~P. Quadrat.
\newblock Duality and separation theorems in idempotent semimodules.
\newblock {\em Linear Algebra and Appl.}, 379:395--422, 2004.
\newblock Also eprint \arxiv{math.FA/0212294}.

\bibitem[CGQS05]{cgqs04}
G.~Cohen, S.~Gaubert, J.~P. Quadrat, and I.~Singer.
\newblock Max-plus convex sets and functions.
\newblock In G.~L. Litvinov and V.~P. Maslov, editors, {\em Idempotent
  Mathematics and Mathematical Physics}, Contemporary Mathematics, pages
  105--129. American Mathematical Society, 2005.
\newblock Also ESI Preprint 1341, \arxiv{math.FA/0308166}.

\bibitem[CMQV89]{cohen89a}
G.~Cohen, P.~Moller, J.~P. Quadrat, and M.~Viot.
\newblock Algebraic tools for the performance evaluation of discrete event
  systems.
\newblock {\em IEEE Proceedings: Special issue on Discrete Event Systems},
  77(1):39--58, January 1989.

\bibitem[CP94]{conte94}
G.~Conte and A.~M. Perdon.
\newblock Problems and results in a geometric approach to the theory of systems
  over rings.
\newblock In {\em Linear algebra for control theory}, volume~62 of {\em IMA
  Vol. Math. Appl.}, pages 61--74. Springer, New York, 1994.

\bibitem[CP95]{conte95}
G.~Conte and A.~M. Perdon.
\newblock The disturbance decoupling problem for systems over a ring.
\newblock {\em SIAM J. Control Opt.}, 33(3):750--764, 1995.

\bibitem[Cur86]{Cu86}
R.~F. Curtain.
\newblock Invariance concepts in infinite dimensions.
\newblock {\em SIAM J. Control Opt.}, 24(5):1009--1030, 1986.

\bibitem[DLLL08]{DLLL08}
M.~{D}i Loreto, J.~F. Lafay, and J.~J. Loiseau.
\newblock Some remarks on duality over a commutative ring.
\newblock {\em Mathematics and Computers in Simulation}, 76:375--387, 2008.

\bibitem[Gau92]{gaubert92a}
S.~Gaubert.
\newblock {\em Th\'eorie des syst\`emes lin\'eaires dans les dio\"\i des}.
\newblock Th\`ese, \'Ecole des Mines de Paris, July 1992.

\bibitem[Gau98]{gaubert98n}
S.~Gaubert.
\newblock Exotic semirings: Examples and general results.
\newblock Support de cours de la 26$^{\mbox{\rm i\`eme}}$ \'Ecole de Printemps\
  d'Informatique Th\'eorique, Noirmoutier, 1998.

\bibitem[GK95]{GargKumar95}
V.~K. Garg and R.~Kumar.
\newblock {\em Modeling and Control of Logical Discrete Event Systems}.
\newblock Kluwer Academic Publisher, Norwell Massachusetts, 1995.

\bibitem[GK07]{gk07}
S.~Gaubert and R.~D. Katz.
\newblock The {M}inkowski theorem for max-plus convex sets.
\newblock {\em Linear Algebra and Appl.}, 421:356--369, 2007.
\newblock Also eprint \arxiv{math.MG/0605078}.

\bibitem[GK08]{gk08u}
S.~Gaubert and R.~D. Katz.
\newblock The tropical analogue of polar cones.
\newblock Eprint \arxiv{math.MG/0805.3688}, 2008.

\bibitem[Hau82]{hautus82}
M.~L.~J. Hautus.
\newblock Controlled invariance in systems over rings.
\newblock In {\em Feedback control of linear and nonlinear systems
  (Bielefeld/Rome, 1981)}, volume~39 of {\em Lecture Notes in Control and
  Inform. Sci.}, pages 107--122. Springer, Berlin, 1982.

\bibitem[HOvdW06]{how06}
G.~Heidergott, G.~J. Olsder, and J.~van~der Woude.
\newblock {\em Max Plus at work}.
\newblock Princeton University Press, 2006.

\bibitem[Kat07]{katz07}
R.~D. Katz.
\newblock Max-plus {(A,B)}-invariant spaces and control of timed discrete event
  systems.
\newblock {\em IEEE Trans. on Automatic Control}, 52(2):229--241, 2007.
\newblock Also eprint \arxiv{math.OC/0503448}.

\bibitem[LHC03]{LHC03}
M.~Lhommeau, L.~Hardouin, and B.~Cottenceau.
\newblock Optimal control for $(max,+)$-linear systems in the presence of
  disturbances.
\newblock In L.~Benvenuti, A.~{De Santis}, and L.~Farina, editors, {\em
  Proceedings of POSTA'03}, volume 294 of {\em Lecture Notes in Control and
  Inform. Sci.}, pages 47--54. Springer, Berlin, Aug. 2003.

\bibitem[LHCJ04]{LHCJ04}
M.~Lhommeau, L.~Hardouin, B.~Cottenceau, and L.~Jaulin.
\newblock Interval analysis and dioid: application to robust controller design
  for timed event graph.
\newblock {\em Automatica}, 40(11):1923--1930, 2004.

\bibitem[Lho03]{Lhommeau}
M.~Lhommeau.
\newblock {\em \'{E}tude de syst\`emes \`a \'ev\'enements discrets dans
  l'alg\`ebre $(\max,+)$}.
\newblock Th\`ese de doctorat, ISTIA - Universit\'e d'Angers, December 2003.

\bibitem[LMS01]{litvinov00}
G.~L. Litvinov, V.~P. Maslov, and G.~B. Shpiz.
\newblock Idempotent functional analysis: An algebraic approach.
\newblock {\em Math. Notes}, 69(5):696--729, 2001.
\newblock Also eprint \arxiv{math.FA/0009128}.

\bibitem[LT01]{leboudec}
J-Y. {Le Boudec} and P.~Thiran.
\newblock {\em Network calculus}.
\newblock Number 2050 in Lecture Notes in Comput. Sci. Springer, 2001.

\bibitem[MS92]{maslov92}
V.~P. Maslov and S.~Samborski\u\i, editors.
\newblock {\em Idempotent analysis}, volume~13 of {\em Adv. in Sov. Math.}
\newblock AMS, RI, 1992.

\bibitem[MW70]{MorWon70}
A.~S. Morse and W.~M. Wonham.
\newblock Decoupling and pole assignment in linear multivariable systems: a
  geometric approach.
\newblock {\em SIAM Journal of Control}, 8(1):1--18, 1970.

\bibitem[MW71]{MorWon71}
A.~S. Morse and W.~M. Wonham.
\newblock Status of noninteracting control.
\newblock {\em IEEE Trans. on Automatic Control}, 16(6):568--581, 1971.

\bibitem[SS92]{shpiz}
S.~N. Samborski{\u\i} and G.~B. Shpiz.
\newblock Convex sets in the semimodule of bounded functions.
\newblock In {\em Idempotent analysis}, pages 135--137. Amer. Math. Soc.,
  Providence, RI, 1992.

\bibitem[Won85]{wonham}
W.~M. Wonham.
\newblock {\em Linear multivariable control: a geometric approach}.
\newblock Springer, 1985.
\newblock Third edition.

\bibitem[Zim77]{zimmerman77}
K.~Zimmermann.
\newblock A general separation theorem in extremal algebras.
\newblock {\em Ekonomicko-matematicky Obzor}, 13(2):179--201, 1977.

\end{thebibliography}
\end{document}